\newtheorem{theorem}{Theorem}[section]
\newtheorem{definition}[theorem]{Definition}
\newtheorem{lemma}[theorem]{Lemma}
\renewcommand\subsubsection{\@secnumfont}{\bfseries}%
\renewcommand\subsubsection{\@startsection{subsubsection}{3}
  \z@{.5\linespacing\@plus.7\linespacing}{-.5em}%
  {\normalfont\bfseries\itshape}}
\newcommand{\ds}{\displaystyle}
\renewcommand{\epsilon}{\varepsilon}
\renewcommand{\d}[3][]{
    \ifstrempty{#1}{
        \frac{d#2}{d#3}
    }{
        \frac{ d^{#1}#2 }{ {d#3}^{#1} }
    }
}
\newcommand{\p}[2]{\frac{\partial#1}{\partial#2}}
\newcommand{\sgn}{\operatorname{sgn}}
\newcommand{\Var}{\operatorname*{Var}}
\newcommand{\dint}{\displaystyle\int}
\newcommand{\dlim}{\displaystyle\lim}
\newcommand{\leb}{\mathcal{L}}
\newcommand{\essVar}{\operatorname*{essVar}}
\newcommand{\Glim}{\Gamma\text{-}\lim}
    \newcommand{\wc}{\@ifstar{\wcstar}{\wcnostar}}
\newcommand{\wcnostar}{\rightharpoonup}
\newcommand{\wcstar}{\overset{\ast}{\rightharpoonup}}
\newcommand{\R}{\mathbb{R}}
\newcommand{\N}{\mathbb{N}}
\title[Boundary Conditions for Second-Order Phase Transitions]{Effect of Boundary Conditions on Second-Order Singularly-Perturbed Phase Transition Models on $\mathbb{R}$}
\author{Thomas Lam}
\date{}
\begin{document}
\maketitle

\begin{abstract}
The second-order singularly-perturbed problem concerns the integral functional $\int_\Omega \epsilon_n^{-1}W(u) + \epsilon_n^3\|\nabla^2u\|^2\,dx$ for a bounded open set $\Omega \subseteq \R^N$, a sequence $\epsilon_n \to 0^+$ of positive reals, and a function $W:\R \to [0,\infty)$ with exactly two distinct zeroes.  This functional is of interest since it models the behavior of phase transitions, and its Gamma limit as $n \to \infty$ was studied by Fonseca and Mantegazza.  In this paper, we study an instance of the problem for $N=1$.  We find a different form for the Gamma limit, and study the Gamma limit under the addition of boundary data.
\end{abstract}

\section{Introduction}

For a bounded open set $\Omega \subseteq \R^N$, we may imagine $\Omega$ as a container for a liquid whose density is given by  $u:\Omega \to \R$.  The potential energy of the liquid can be measured by the integral functional $u \mapsto \int_\Omega W(u)\,dx$ where $W:\R \to [0,\infty)$ is the energy per unit volume.

Suppose that $W$ is a \textit{two-welled potential}, so that $W$ has exactly two distinct zeroes $z_1,z_2$ with $z_1 < z_2$, which are the \textit{phases} of the liquid.  Then the liquid will tend to take on the two densities $z_1$ and $z_2$, and in particular its density will simply take the form $u = z_1 \cdot 1_E + z_2 \cdot 1_{\Omega \setminus E}$ in order to minimize the potential $\int_\Omega W(u)\,dx$, which is unexciting.

However, in practice, such rapid changes in density induces high \textit{interfacial energy} between phases.  To account for this, the Van der Waals-Cahn-Hilliard theory of phase transitions \cite{cahnhilliard} models the potential energy via an integral functional of the form
\begin{equation}\label{eq:Jfunctional}
    J_\epsilon(u) := \int_\Omega W(u) + \epsilon^2\|\nabla u\|^2\,dx, \qquad u \in W^{1,2}(\Omega).
\end{equation}
The problem of interest is to minimize $J_\epsilon(u)$ subject to a mass constraint $\int_\Omega u\,dx = m$.  A minimizing function $u_\epsilon$ gives a stable density distribution that the liquid would likely conform to.

What sort of stable density distribution is approached as $\epsilon \to 0^+$?  Specifically, suppose that we have a sequence $\epsilon_n \to 0^+$ such that the sequence of minimizers $u_{\epsilon_n}$ converges to a function $u$ in some reasonable sense.  What properties must be satisfied by $u$?

Using Gamma convergence (described in Section 2.4), Modica \cite{modica} and Sternberg \cite{sternberg} independently proved that such a $u$ satisfies $u \in \{z_1,z_2\}$ almost everywhere, and minimizes the perimeter of $u^{-1}(z_1)$.  In particular, they prove that for each sequence of positive reals $\epsilon_n$ with $\epsilon_n \to 0^+$, we have that
$$\Glim_{n \to \infty} (J_{\epsilon_n}/\epsilon_n)(u) = \begin{cases}2\dint_{z_1}^{z_2} \sqrt{W(x)}\,dx \cdot \operatorname{Per}_\Omega(u^{-1}(z_1)), & u \in BV(\Omega;\{z_1,z_2\}) \\ +\infty, & \text{otherwise}\end{cases}$$
under $L^1(\Omega)$ convergence, where $\operatorname{Per}_\Omega(E)$ denotes the \textit{perimeter} of a set $E \subseteq \Omega$, defined generally as
$$\operatorname{Per}_\Omega(E) := \sup\left\{\int_E \operatorname{div} \varphi : \varphi \in C_0^\infty(\R^N), |\varphi| \leq 1\right\}.$$

Owen, Rubinstein and Sternberg \cite{boundary} studied the family of functionals $J_\epsilon$ under boundary conditions instead of a mass constraint.  They prove that if $\Omega$ has $C^2$ boundary, $h_\epsilon \in L^p(\partial \Omega) \cap L^\infty(\partial \Omega)$ is the trace of a function in $W^{1,2}(\Omega)$ for each $\epsilon > 0$, $h_\epsilon \to h \in L^1(\partial \Omega) \cap L^\infty(\partial \Omega)$ in $L^1(\partial \Omega)$ as $\epsilon \to 0$, $\int_{\partial \Omega} \left|\p{h_\epsilon}{\sigma}\right|$ is bounded in $\epsilon$ and $\left\|\p{h_\epsilon}{\sigma}\right\|_{L^\infty(\partial \Omega)} \leq C\epsilon^{-1/4}$ for a constant $C> 0$ where $\sigma$ is a surface parameter on $\partial \Omega$, and $K_\epsilon:L^1(\Omega) \to \overline{\R}$ is defined as
\begin{equation}\label{eq:Kfunctional}
  K_\epsilon := \begin{cases}\int_\Omega \epsilon^{-1}W(u) + \epsilon\|\nabla u\|^2\,dx, & u \in W^{1,2}(\Omega) \text{ and } \operatorname{Tr} u = h_\epsilon \\ +\infty, & \text{otherwise}\end{cases},  
\end{equation}
where $\overline{\mathbb{R}} = [-\infty,\infty]$, then
\begin{align*}
    \Glim_{n \to \infty} &K_{\epsilon_n}(u) \\
    &= \begin{cases}\int_\Omega |\nabla \chi(u)| + \int_{\partial \Omega} |\chi(h(x)) - \chi(\operatorname{Tr}(u)(x))|\,d\mathcal{H}^{N-1}(x), & u \in BV(\Omega;\{z_1,z_2\}) \\ +\infty, & \text{otherwise} \end{cases},
\end{align*}
where
$$\chi(t) := 2\int_{z_1}^t \sqrt{W(z)}\,dz$$
and $\epsilon_n \to 0^+$.


Fonseca and Mantegazza \cite{fonseca} consider a second-order derivative.  To be precise, they define
\begin{equation}\label{eq:Hfunctional}
    H_\epsilon(u) := \int_\Omega \epsilon^{-1}W(u) + \epsilon^3\|\nabla^2 u\|^2\,dx
\end{equation}
and proved that for every sequence of positive reals $\epsilon_n \to 0^+$, we have 
$$\Glim_{n \to \infty} H_{\epsilon_n}(u) = \begin{cases}c \cdot \operatorname*{Per}_\Omega(u^{-1}(z_1)), & u \in BV(\Omega, \{z_1,z_2\}) \\ +\infty, \text{otherwise}\end{cases}$$
under $L^1(\Omega)$ convergence, where
\begin{equation}c := \min\left\{\int_\R W(u) + |u''|^2\,dt : u \in W^{2,2}_\text{loc}(\R), \lim_{t \to -\infty} u(t) = z_1, \lim_{t \to \infty} u(t) = z_2\right\}. \label{eq:c}\end{equation}

For a treatment of more general functionals, see \cite{chermisi},  \cite{cicalese}, and \cite{boundaryy}.

Fonseca and Mantegazza's proofs appeal to rather sophisticated constructs such as the Young measure.  In this paper, we consider a 1-dimensional instance of the problem solved by Fonseca and Mantegazza, which will allow for alternative and more elementary methodologies.   

Our goal will ultimately be to combine the efforts of Owen, Rubinstein, and Sternberg with those of Fonseca and Mantegazza by considering the addition of boundary conditions as in \ref{eq:Kfunctional} to the second-order problem as in \ref{eq:Hfunctional}, which will be possible due to our alternative methodologies.

To wit, define
\begin{equation}
  \Phi(u) := \left(\int_0^1 (u(y)^2-1)^2\,dy\right)^{3/4}\left(\int_0^1|u''(y)|^2\,dy\right)^{1/4} \label{eq:Phi}
\end{equation}
for all $u \in W^{2,p}(0,1)$.  Define the families
\begin{equation}
    \mathscr{J} := \{u \in W^{2,p}(0,1) : u(0_+) = -1, u(1_-)  = 1, u'(0_+) = u'(1_-) = 0\} \label{eq:famJ}
\end{equation}
and
\begin{equation}
    \mathscr{J}'(t) := \{u \in W^{2,p}(0,1) : u(0_+) = -1, u(1_-)  = t, u'(0_+) = 0\} \label{eq:famJ2}
\end{equation}
for each $t \in \R$.  Let 
\begin{equation}
 \alpha := \frac{2}{3^{3/4}}\inf_{u \in \mathscr{J}} \Phi(u) \label{eq:alpha}
\end{equation}
and 
\begin{equation}
\beta(t) := \frac{4}{3^{3/4}}\inf_{u \in \mathscr{J}'(t)} \Phi(u) \label{eq:beta}    
\end{equation}
for all $t \in \R$.  Then our main result is the following theorem.

\begin{theorem}\label{boundary2}
Let $\Omega = (a,b)$ and let $1 \leq p \leq 4$.  Let $a_0,b_0 \in \R$, and for each $\epsilon > 0$, let $a_\epsilon, b_\epsilon \in \R$ be such that $a_\epsilon \to a_0$ and $b_\epsilon \to b_0$ as $\epsilon \to 0^+$.  For each $\epsilon > 0$ define a functional $G_\epsilon(u):L^p(\Omega) \to \overline{\mathbb{R}}$ via
$$G_{\epsilon}(u) := \begin{cases}\int_\Omega \epsilon^{-1}(u^2-1)^2 + \epsilon^3|u''|^2\,dx, & u \in W^{2,p}(\Omega),u(a_+) = a_\epsilon, u(b_-) = b_\epsilon \\ +\infty, & \text{otherwise}\end{cases}.$$
Let $\epsilon_n \to 0^+$ be a sequence of positive reals.  Then, under strong $L^p(\Omega)$ convergence, we have that
\begin{align*}
  \Glim_{n \to \infty}&G_{\epsilon_n}(u) \\&= \begin{cases}\alpha\essVar_\Omega u + \beta(-a_0\sgn u(a_+)) + \beta(-b_0 \sgn u(b_-)), & u \in \text{BPV}(\Omega;\{-1,1\}) \\ +\infty, & \text{otherwise}\end{cases}.  
\end{align*}
\end{theorem}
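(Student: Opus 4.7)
The proof follows the standard two-part structure for $\Gamma$-convergence.  The core computation driving both halves is the weighted AM--GM inequality
\[
  \epsilon^{-1} A + \epsilon^{3} B \;\geq\; \frac{4}{3^{3/4}}\,A^{3/4}\, B^{1/4} \qquad (A,B \geq 0),
\]
which becomes sharp at $\epsilon^{-1}A = \tfrac{1}{3}\epsilon^{3}B$.  Applied with $A = \int_I (u^2-1)^2\,dx$ and $B = \int_I |u''|^2\,dx$ over a transition window $I \subset \Omega$, the scale-invariance of $\Phi$ under $u(\cdot) \mapsto u(L\,\cdot\,)$ turns the right-hand side into $\frac{4}{3^{3/4}}\Phi(v)$, where $v$ is the rescaling of $u|_I$ to $(0,1)$.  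This identifies $\alpha$ with the per-transition cost in the interior (via $\mathscr{J}$) and $\beta(t)$ with the boundary-layer cost (via $\mathscr{J}'(t)$).

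\textbf{Recovery sequence ($\Gamma$-limsup).}  Given $u \in \text{BPV}(\Omega;\{-1,1\})$ with jumps $c_1 < \cdots < c_k$, I fix $\eta > 0$ and choose near-minimizers $v_\eta \in \mathscr{J}$ and $w_\eta^{(t)} \in \mathscr{J}'(t)$ with $\Phi$-values within $\eta$ of their respective infima.  At each interior jump $c_i$ I insert a rescaled (and, if needed, reflected) copy of $v_\eta$ on an interval of optimal width proportional to $\epsilon_n$ about $c_i$; at $x = a$ I insert a rescaled, suitably reflected copy of $w_\eta^{(-a_{\epsilon_n}\sgn u(a_+))}$ on the optimal boundary-layer width, and analogously at $x = b$.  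Outside these windows, $u_n \equiv u$.  The vanishing-derivative conditions in the definitions of $\mathscr{J}$ and $\mathscr{J}'$ guarantee $C^1$-matching at the interior seams, placing $u_n$ in $W^{2,p}(\Omega)$ with the correct boundary values.  Direct computation using the sharpness of AM--GM yields $G_{\epsilon_n}(u_n) \to \alpha\essVar_\Omega u + \beta(-a_0\sgn u(a_+)) + \beta(-b_0\sgn u(b_-)) + O(\eta)$, and a diagonal extraction on $\eta \to 0$ produces the recovery sequence.  Convergence $u_n \to u$ in $L^p(\Omega)$ follows from uniform $L^\infty$-boundedness combined with the vanishing measure of the transition windows.

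\textbf{Lower bound ($\Gamma$-liminf).}  Assume $u_n \to u$ in $L^p(\Omega)$ along a subsequence with $G_{\epsilon_n}(u_n)$ bounded.  The potential term forces $\int(u_n^2-1)^2\,dx \to 0$, so $u \in \{-1,1\}$ a.e.  A counting argument---any full transition of $u_n$ between $\eta$-neighborhoods of $\pm 1$ costs at least $2\alpha - o(1)$ by AM--GM---bounds the number of jumps, placing $u \in \text{BPV}(\Omega;\{-1,1\})$ with finitely many jumps $c_1,\ldots,c_k$.  About each $c_i$ I select a \emph{good slicing window} $(s_n^i,t_n^i)$ of width $O(\epsilon_n)$ satisfying $u_n(s_n^i) \to u(c_i^-)$, $u_n(t_n^i) \to u(c_i^+)$, and $\epsilon_n u_n'(s_n^i),\, \epsilon_n u_n'(t_n^i) \to 0$.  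The rescaled profile $v_n^i(y) := u_n(s_n^i + (t_n^i - s_n^i)\,y)$ is $H^2(0,1)$-bounded and, along a subsequence, converges in $C^1([0,1])$ to some $v^i \in \mathscr{J}$.  Applying the AM--GM identity window-by-window and invoking weak lower semicontinuity of $\Phi$ yields $\liminf$-contribution at least $\frac{4}{3^{3/4}}\Phi(v^i) \geq 2\alpha$ per interior window; disjointness of the windows for large $n$ permits summation.  Boundary windows at $a$ and $b$ (one-sided, with a derivative condition imposed only at the interior endpoint) are treated identically with $\mathscr{J}'$ replacing $\mathscr{J}$, supplying the two $\beta$-contributions.

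\textbf{Main obstacle.}  The technical heart of the argument is the good-slicing step: locating endpoints whose \emph{values} track $u(c_i^\pm)$ and whose rescaled \emph{derivatives} $\epsilon_n u_n'$ vanish.  Values can be arranged via $L^p$-convergence combined with the smallness of $\int(u_n^2-1)^2$ off the transition layers.  Controlling the derivatives, however, demands an averaging/Chebyshev argument against the bound $\int |u_n''|^2 = O(\epsilon_n^{-3})$ to locate points where $|u_n'| = o(\epsilon_n^{-1})$ at the correct spatial scale; this is considerably more delicate than its first-order analogue, because $u_n'$ is no longer controlled by energy bounds alone.  Once these slicing points are secured, the remaining steps (rescaling, Sobolev compactness, weak $\Phi$-lower semicontinuity) reduce to routine applications of classical tools.
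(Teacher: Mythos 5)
Your overall architecture — AM--GM at the per-transition scale, rescaling to identify the per-jump cost with $\inf\Phi$ over the appropriate family, and good-slicing to locate transition windows — matches the paper in spirit. Your liminf argument (rescale $u_n$ on a slicing window, extract a $C^1$-limit via Sobolev compactness, apply weak lower semicontinuity to $\Phi$) is a genuinely different route from the paper's: the paper instead \emph{modifies} $u_n$ on each window via the smooth-connector Lemma~\ref{glue} to land directly in $\mathscr{J}$ or $\mathscr{J}'(b_{\epsilon_n})$ and applies AM--GM to the modified function, then invokes continuity of $\beta$ to replace $\beta(b_{\epsilon_n})$ by $\beta(b_0)$. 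Your limit-profile approach has the nice feature of absorbing the convergence $b_{\epsilon_n}\to b_0$ into the limit profile (the rescaled endpoint value converges to $b_0$ automatically), so you avoid needing lower semicontinuity of $\beta$ for the liminf — a real simplification relative to the paper's treatment.

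However, there is a genuine gap on the limsup side, and it is the technical heart of the result. Your recovery sequence inserts near-minimizers $w^{(t_n)}_\eta\in\mathscr{J}'(t_n)$ with $t_n=-a_{\epsilon_n}\sgn u(a_+)$ varying with $n$. To conclude $\limsup_n G_{\epsilon_n}(u_n)\leq \alpha\essVar_\Omega u+\beta(-a_0\sgn u(a_+))+\beta(-b_0\sgn u(b_-))+O(\eta)$ you need two things you do not establish: (1) upper semicontinuity of $t\mapsto\beta(t)$, so that $\limsup_n\beta(t_n)\leq\beta(t_0)$; and (2) control on the optimal boundary-layer width, which here scales like $\epsilon_n\bigl(3\int|(w^{(t_n)}_\eta)''|^2/\int((w^{(t_n)}_\eta)^2-1)^2\bigr)^{1/4}$ and could a priori fail to vanish if the chosen near-minimizers degenerate as $t_n$ varies. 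The paper spends essentially all of Section~4's machinery on exactly this: the families $\mathscr{J}'_\infty(t)$ and $\overline{\mathscr{J}'_\infty(t)}$, the functional $\Psi$, the compactness Lemma~\ref{psicompact}, the $L_u$-truncation Lemma~\ref{boundL}, the continuity Lemma~\ref{betacts}, and the auxiliary quantities $\beta_\epsilon(t)$ restricted to $L_v\leq 1/\sqrt{\epsilon}$ (which is precisely what keeps the boundary-layer width $\epsilon_n L_{v_n}\leq\sqrt{\epsilon_n}\to0$). Your write-up treats this as routine and never raises the issue; the ``diagonal extraction on $\eta\to0$'' handles the approximation in $\eta$ but does nothing for the $t_n\to t_0$ dependence. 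The fix is not hard in outline --- pick a near-minimizer $w\in\mathscr{J}'(t_0)$ for the fixed limit value and deform it affinely via $u_t:=\frac{1+t}{1+t_0}(w+1)-1\in\mathscr{J}'(t)$, as in the paper's Step~2 of Lemma~\ref{betacts} --- but it must be done, and omitting it leaves the limsup unproved.
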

Here, $\essVar$ denotes \textit{essential variation}, which is described in Section 2.2.  The addition of boundary conditions is of particular interest here because they can ensure at least one phase transition by preventing the existence of trivial minimizers.

The utility of this Gamma convergence result is justified by the following compactness result. 

\begin{theorem}[Compactness for Second Order Problem]\label{compactness2}
Let $\Omega = (a,b)$ and let $1 \leq p \leq 4$.  For each $\epsilon > 0$, define the functional $F_\epsilon:L^p(\Omega) \to \overline{\R}$ via
\begin{equation}
    F_{\epsilon}(u) := \begin{cases}\int_\Omega \epsilon^{-1}(u^2-1)^2 + \epsilon^3|u''|^2\,dx, & u \in W^{2,p}(\Omega) \\ +\infty, & \text{otherwise}\end{cases}. \label{eq:Fdef}
\end{equation}
Let $\epsilon_n$ be a sequence of positive reals with $\epsilon_n \to 0^+$.  If we have a sequence $u_n \in L^p(\Omega)$ with $\sup_{n \in \N}|F_{\epsilon_n}(u_n)| < \infty$, then there exists a subsequence $u_{n_k}$ for which $u_{n_k} \to u$ in $L^p(\Omega)$ for some $u \in BPV(\Omega;\{-1,1\})$.
\end{theorem}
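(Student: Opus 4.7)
The plan is: (i) extract a priori bounds from the energy; (ii) apply a Young-type inequality relating the energy density to $\Phi$; (iii) bound the number of sign changes of $u_n$ in order to extract an a.e.~convergent subsequence via Helly's selection theorem; (iv) upgrade to strong $L^p$ convergence via Brezis--Lieb.

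From $F_{\epsilon_n}(u_n) \le M$ I immediately obtain $\int_\Omega (u_n^2-1)^2\,dx \le M\epsilon_n \to 0$, so $u_n^2 \to 1$ in $L^2(\Omega)$; consequently $\|u_n\|_{L^4}^2 \le |\Omega|^{1/2} + \|u_n^2-1\|_{L^2}$ is uniformly bounded and $\int_\Omega u_n^4\,dx \to |\Omega|$. Minimizing $\epsilon \mapsto \epsilon^{-1}A + \epsilon^3 B$ over $\epsilon > 0$ produces
\[
\int_I \epsilon_n^{-1}(u_n^2-1)^2 + \epsilon_n^3(u_n'')^2\,dx \ge \frac{4}{3^{3/4}}\left(\int_I (u_n^2-1)^2\right)^{3/4}\left(\int_I(u_n'')^2\right)^{1/4}
\]
for any sub-interval $I \subseteq \Omega$, and by the scale invariance of $\Phi$ the right-hand side equals $\frac{4}{3^{3/4}}\Phi(v)$ for the linear rescaling $v$ of $u_n|_I$ to $(0,1)$.

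For the transition count, fix small $\eta > 0$. Using Chebyshev's inequality on $\int(u_n^2-1)^2 \le M\epsilon_n$ together with the Gagliardo--Nirenberg bound $\|u_n'\|_{L^2}^2 \lesssim \|u_n\|_{L^2}\|u_n''\|_{L^2}$ to also control $|u_n'|$, I select partition points $a = x_0^n < \cdots < x_{K_n}^n = b$ at which $u_n$ is close to $\pm 1$ and $|u_n'|$ is small. Calling $(x_i^n, x_{i+1}^n)$ a \emph{full transition} when the endpoint signs differ, short plateau extensions to $\pm 1$ with zero derivative produce, after rescaling to $(0,1)$, an element of $\mathscr{J}$; combined with the Young inequality and~\eqref{eq:alpha} this yields energy $\ge 2\alpha - \eta$ per full transition, so the number of full transitions is bounded uniformly by $M/(2\alpha - \eta)$. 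Since the set $\{|u_n| \le 1/2\}$ where any sign change must occur has measure $\lesssim M\epsilon_n \to 0$, a finer Young-inequality analysis on its connected components bounds the total number of sign changes of $u_n$, so $\sgn(u_n)$ has uniformly bounded variation. Helly's theorem extracts a subsequence with $\sgn(u_n) \to s$ a.e., and since $|u_n| \to 1$ a.e.~(after a further subsequence) we have $u_n \to u := s \in \mathrm{BPV}(\Omega;\{-1,1\})$ a.e. A.e.~convergence together with uniform $L^4$ boundedness and $\|u_n\|_{L^4} \to \|u\|_{L^4} = |\Omega|^{1/4}$ then gives $u_n \to u$ in $L^4$ by Brezis--Lieb, hence in $L^p$ for all $1 \le p \le 4$.

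The main obstacle is step (iii): in particular the sharp per-transition lower bound $2\alpha - \eta$, which requires the plateau-extended rescaled competitor to lie within $\eta$ of $\inf_{\mathscr{J}}\Phi = \frac{3^{3/4}}{2}\alpha$. Delivering this demands a stability analysis of the variational problem~\eqref{eq:alpha} and a careful selection of partition points so that the extensions contribute only $o(1)$ to $\Phi$; extending the analysis to bound within-phase oscillation counts is similar in spirit.
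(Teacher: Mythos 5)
Your plan diverges significantly from the paper's, and the divergence is not cosmetic: step (iii) as written has a genuine gap that cannot be closed by the Young-inequality analysis you sketch.

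The problem is your claim that the number of sign changes of $u_n$ (and hence $\Var_\Omega \sgn(u_n)$) is uniformly bounded. This is false for sequences with bounded energy. Consider $u_n \equiv 1$ away from a single point, except on an interval of length $\epsilon_n$ where $u_n$ dips to near $0$ and oscillates $k_n$ times between $\pm\xi_n$ before returning to $1$. The potential term on this interval contributes $\approx \epsilon_n^{-1}\cdot 1 \cdot \epsilon_n = O(1)$, and one computes that the second-derivative term $\epsilon_n^3\int |u_n''|^2$ is $\approx \xi_n^2 k_n^4$, so taking $\xi_n = k_n^{-2}$ keeps the total energy bounded while letting $k_n \to \infty$. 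The Young bound you would apply to this connected component of $\{|u_n| \le 1/2\}$ gives a lower bound $\gtrsim \xi_n^{1/2}k_n$, which stays bounded precisely under this scaling, so it does \emph{not} control $k_n$. Thus $\sgn(u_n)$ need not have uniformly bounded variation, Helly's theorem cannot be applied to it, and the argument collapses. (Note that the limit $u$ is still $1$ a.e.\ since the oscillation region shrinks, so compactness is true — your chosen quantity $\sgn(u_n)$ is simply the wrong thing to control.) A secondary issue is that your per-transition lower bound $2\alpha - \eta$ is essentially the $\Gamma$-liminf inequality, which takes considerable work; using it inside the compactness proof inverts the natural logical order and the sketch you give of it is not self-contained.

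The paper avoids both problems. It first proves a compactness result for the \emph{first-order} energy $\int \epsilon_n^{-1}(u_n^2-1)^2+\epsilon_n|u_n'|^2$ by applying AM--GM to see that $\int_\Omega |U_n'|\,dx$ is bounded, where $U_n := 2u_n-\frac23 u_n^3$. This $U_n$ is the crucial replacement for $\sgn(u_n)$: near $u_n=0$ we have $U_n\approx 2u_n$, so shallow oscillations of amplitude $\xi_n$ contribute only $O(k_n\xi_n)$ to $\Var U_n$ rather than $O(k_n)$ to $\Var \sgn(u_n)$, which is exactly what kills the counterexample above. Helly applied to $U_n$ then yields a.e.\ convergence. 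To feed the second-order energy into this machinery, the paper proves an interpolation inequality (Theorem \ref{inter3}) giving $\int_{\Omega_0}\epsilon_n|u_n'|^2 \lesssim \int_\Omega \epsilon_n^{-1}(u_n^2-1)^2+\epsilon_n^3|u_n''|^2$ on compactly contained sub-intervals, and concludes by a diagonalization over an exhaustion of $\Omega$. The final upgrade from a.e.\ to $L^p$ convergence uses Vitali via the uniform integrability of $\{|u_n|^p\}$ (Lemma \ref{unif}); your Brezis--Lieb argument would also serve for this last step. I suggest replacing step (iii) entirely: introduce $U_n$, establish the first-order bound via the interpolation inequality, and proceed as the paper does.
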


Proofs of this result are given in \cite{fonseca}, and \cite{cicalese}.  We will provide yet another proof.

The structure of this paper is as follows.  In Section 2, we review pointwise variation and results in Sobolev spaces.  We then define Gamma convergence and motivate its study.  

In Section 3, we prove Theorem \ref{compactness2}, and then give a more elementary proof for Fonseca and Mantegazza's results in the one-dimensional case.  Specifically, let $\Omega = (a,b)$ and define the integral functional $F_\epsilon : L^p(\Omega) \to \overline{\R}$ as in \eqref{eq:Fdef}.  Let $\epsilon_n$ be a sequence of positive reals with $\epsilon_n \to 0^+$, and let $1 \leq p \leq 4$.  We first prove that if $u_n \in L^2(\Omega)$ is a sequence for which $\sup_{n \in \N} F_{\epsilon_n}(u_n) < \infty$, then we can find a subsequence $u_{n_k}$ such that $u_{n_k} \to u$ in $L^p(\Omega)$ for some $u \in BPV(\Omega;\pm 1)$.  Then, we prove that for $1 \leq p < \infty$, we have
$$\Glim_{n \to +\infty} F_{\epsilon_n}(u) = \begin{cases}\alpha\essVar_\Omega u, & u \in BPV(\Omega;\pm 1) \\ +\infty, & \text{otherwise}\end{cases}$$
under $L^p(\Omega)$ convergence, where $\alpha$ is defined as in $\eqref{eq:alpha}$.

Lastly, in Section 4, we build off of the work done in Section 3 to prove our main result.

The current work, and a \textit{slicing} methodology, will be used to extend to the $N$-dimensional case. \cite{me}

\section{Preliminaries}








\subsection{Pointwise Variation and Essential Variation}

For a function $u:I \to \R$, where $I$ is an interval, we may define the pointwise variation of $u$ as
$$\Var_I u := \sup\left\{\sum_{i=1}^n |u(x_i)-u(x_{i-1})|, x_i \in I, x_0 < x_1 < \ldots < x_n\right\}.$$
If $\Var_I u < \infty$ then we write $u \in BPV(I)$.  A useful property is that if $u$ is absolutely continuous, then $\Var_I u = \int_I |u'|\,dx$

A family of functions having uniformly bounded pointwise variation is a powerful property.

\begin{theorem}[Helly's Selection Theorem]
Let $I$ be an interval and $\mathcal{F} \subseteq BPV(I)$ be an infinite family of functions $u:I \to \R$ such that $\sup_{u \in \mathcal{F}} \Var_I u \leq C$ for a constant $C > 0$.  Assume moreover that there exists $x_0 \in I$ such that the set $\{u(x_0) : u \in \mathcal{F}\}$ is bounded.  Then there exists a sequence $\{u_n\}_n \in \mathcal{F}$ that converges pointwise to some $u \in BPV(I)$.
\end{theorem}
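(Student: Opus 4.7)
The plan is to reduce to sequences of monotone functions via Jordan decomposition, and then apply a Cantor diagonal argument combined with a density/monotonicity extraction.

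Fix the reference point $x_0 \in I$ provided by the hypothesis. For each $u \in \mathcal{F}$ define the signed cumulative variation $V_u : I \to \R$ by $V_u(x) := \Var_{[x_0,x]} u$ for $x \geq x_0$ and $V_u(x) := -\Var_{[x,x_0]} u$ for $x < x_0$. One verifies directly that $V_u$ and $V_u - u$ are both monotone increasing on $I$, whence $u = V_u - (V_u - u)$ is a difference of two monotone increasing functions. Moreover, $|V_u| \leq \Var_I u \leq C$, and since $V_u(x_0) = 0$ while $|u(x) - u(x_0)| \leq C$ and $|u(x_0)|$ is uniformly bounded by hypothesis, $V_u - u$ is uniformly bounded as well.

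Next, let $\{q_k\}_{k \in \N}$ be a countable dense subset of $I$ containing $x_0$. The sequences $(V_u(q_k))_{u \in \mathcal{F}}$ and $((V_u - u)(q_k))_{u \in \mathcal{F}}$ are bounded in $\R$ for each $k$, so a Cantor diagonal argument extracts a sequence $\{u_n\} \subseteq \mathcal{F}$ such that both $V_{u_n}(q_k)$ and $(V_{u_n} - u_n)(q_k)$ converge as $n \to \infty$ for every $k$; denote their limits by $V^\ast(q_k)$ and $W^\ast(q_k)$. Pointwise limits of monotone increasing functions remain monotone increasing, and we extend $V^\ast$ and $W^\ast$ to monotone increasing functions on all of $I$ by setting, for instance, $V^\ast(x) := \sup\{V^\ast(q_k) : q_k < x\}$ (and similarly for $W^\ast$). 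Each extension, being monotone, has at most countably many discontinuities.

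At every point $x \in I$ that is a continuity point of both $V^\ast$ and $W^\ast$, a squeezing argument---using monotonicity of $V_{u_n}$ to bound $V_{u_n}(x)$ between $V_{u_n}(q_k)$ and $V_{u_n}(q_{k'})$ for rationals $q_k < x < q_{k'}$, passing to the limit in $n$, then letting $q_k, q_{k'} \to x$---yields $V_{u_n}(x) \to V^\ast(x)$, and analogously $(V_{u_n} - u_n)(x) \to W^\ast(x)$, hence $u_n(x) \to V^\ast(x) - W^\ast(x)$. The union of the discontinuity sets of $V^\ast$ and $W^\ast$ is countable, so one final diagonal extraction along this countable set secures pointwise convergence of $u_n$ on all of $I$. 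The limit $u := V^\ast - W^\ast$ is the difference of two bounded monotone functions and therefore lies in $BPV(I)$. The only delicate ingredient is the squeezing step at continuity points of the limit monotone functions; the remaining steps are routine diagonal bookkeeping.
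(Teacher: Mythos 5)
The paper itself does not prove this result; it simply cites Theorem~2.44 of Leoni's book, whose argument is exactly the standard one you have reproduced: Jordan decomposition $u = V_u - (V_u - u)$ into bounded monotone pieces, Cantor diagonalization over a countable dense set, a squeeze at continuity points of the monotone limits, and a final diagonal extraction over the countable set of discontinuities. Your write-up is correct and takes essentially the same route; the one place worth stating more carefully is the last sentence: after the final extraction the limit $u$ equals $V^\ast - W^\ast$ only at continuity points, so one should either redefine $V^\ast$ and $W^\ast$ at the countably many exceptional points to be $\lim_n V_{u_n}(x)$ and $\lim_n (V_{u_n}-u_n)(x)$ (these lie between the one-sided limits, so monotonicity is preserved), or invoke the lower semicontinuity of variation under pointwise convergence (Theorem~\ref{hellybound}) to conclude $\Var_I u \leq C$ directly.
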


This is given as Theorem 2.44 in \cite{leonibook}, where a proof can be found.

In the case that such a pointwise convergence is obtained, we can moreover obtain a bound on $\Var_I u$.

\begin{theorem}\label{hellybound}
If $u_n,u \in BPV(I)$ and $u_n \to u$ pointwise, then
$$\liminf_{n \to \infty} \Var_I u_n \geq \Var_I u.$$
\end{theorem}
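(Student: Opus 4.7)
The plan is to prove this by directly exploiting the supremum definition of pointwise variation: any finite sum appearing in the definition of $\Var_I u$ is a pointwise limit of the corresponding finite sum for $u_n$, and such a finite sum is dominated by $\Var_I u_n$.

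More precisely, I would fix an arbitrary partition $x_0 < x_1 < \ldots < x_n$ of points in $I$. Since $u_n \to u$ pointwise, for each $i \in \{0,1,\ldots,n\}$ we have $u_n(x_i) \to u(x_i)$, so by continuity of the absolute value and finiteness of the sum,
$$\sum_{i=1}^n |u(x_i) - u(x_{i-1})| = \lim_{n \to \infty} \sum_{i=1}^n |u_n(x_i) - u_n(x_{i-1})|.$$
For each $n$, the right-hand sum is bounded by $\Var_I u_n$ by the definition of pointwise variation, so we can replace the limit with a $\liminf$ to obtain
$$\sum_{i=1}^n |u(x_i) - u(x_{i-1})| \leq \liminf_{n \to \infty} \Var_I u_n.$$

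Finally, taking the supremum over all such partitions of the left-hand side yields $\Var_I u \leq \liminf_{n \to \infty} \Var_I u_n$, as desired. There is no genuine obstacle here — the argument uses only pointwise convergence at finitely many points at a time, the triangle-inequality bookkeeping intrinsic to the definition of pointwise variation, and the order-preserving nature of $\liminf$. Note also that we do not actually need the hypothesis $u \in BPV(I)$ a priori; the same argument shows more generally that $\Var_I u \leq \liminf_{n \to \infty} \Var_I u_n$ even when the left side is $+\infty$, which is useful to remark for later applications.
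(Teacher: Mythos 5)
Your proof is correct and is the standard argument for lower semicontinuity of pointwise variation under pointwise convergence; the paper itself does not supply a proof but defers to Proposition 2.47 of Leoni's textbook, where essentially this argument appears. One minor notational remark: you reuse the symbol $n$ both for the sequence index in $u_n$ and for the number of partition points, so you should rename one of them (e.g.\ write the partition as $x_0 < x_1 < \cdots < x_m$) to avoid a clash; your closing observation that $u \in BPV(I)$ is not actually needed is also correct.
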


See Proposition 2.47 in \cite{leonibook} for a proof.

Now suppose, say, $u \in L^p(I)$.  Then there is no sense in speaking of $\Var_I u$ because the values of $u$ are not well-defined pointwise.  The workaround is to define the \textit{essential pointwise variation} of $u$.

\begin{definition}[Essential Pointwise Variation]
Let $I$ be an interval and $u \in L^1_{\text{loc}}(I)$.  Then the essential pointwise variation of $u$ over $I$ is given by
$$\essVar_I u := \inf\left\{\Var_I \tilde{u} : \tilde{u} \text{ is a representative of }u\right\}.$$
\end{definition}

Particularly, if $u \in W^{1,1}(I)$ then $u$ has an absolutely continuous representative $\tilde{u}$ and we can show that $\essVar_I u = \int_I |\tilde{u}'|\,dx$.  A nice property is that the infimum in the definition for $\essVar_I$ is obtained, i.e. there is always a representative $\tilde{u}$ for $u$ such that $\essVar_I u = \Var_I \tilde{u}$.  This can be proven using Helly's Selection Theorem. 

For $u \in L^p(I)$, we write $u \in BPV(I)$ if $u$ has a representative $\tilde{u}$ with $\tilde{u} \in BPV(I)$.  By the property from above, we see that $u \in BPV(I)$ if and only if $\essVar_I u < \infty$. 

\subsection{Sobolev Spaces}

We begin by defining weak differentiation.

\begin{definition}[Weak Derivative]
Let $\Omega \subseteq \R$ be an open set and $1 \leq p \leq \infty$.  For $u \in L^p(\Omega)$, we say that $u$ admits a weak derivative of order $k \in \N$ if there exists $v \in L^p(\Omega)$ satisfying
$$\int_\Omega u\varphi^{(k)}\,dx = (-1)^k\int_\Omega v\varphi\,dx$$
for all $\varphi \in C^\infty_c(\Omega)$.
\end{definition}

It is not too difficult to verify that the weak derivative is unique up to almost-everywhere equivalence.  If $u$ has a differentiable representative, we call its derivative (in the traditional sense) the \textit{strong} derivative, so as to distinguish the two notions of derivative.

The weak derivative is also notated in the same way as the strong derivative.  For example, if $\Omega = (-1,1)$ and we take $u(x) = |x|$, then $u$ admits a weak derivative given by $u'(x) = \sgn(x)$.

Sobolev spaces consist of $L^p$ functions that admit weak derivatives.

\begin{definition}[Sobolev Space]
Let $\Omega \subseteq \R$ be open, $1 \leq p \leq \infty$, and $k \in \N$.  Then the Sobolev space $W^{k,p}(\Omega)$ is the normed space of all $u \in L^p(\Omega)$ that admit weak derivatives up to order $k$, such that $u^{(l)} \in L^p(\Omega)$ for all $1 \leq l \leq k$.  We may endow $W^{k,p}(\Omega)$ with the following norm:
$$\|u\|_{W^{k,p}(\Omega)} := \|u\|_{L^p(\Omega)} + \sum_{l=1}^k \|u^{(l)}\|_{L^p(\Omega)}.$$
\end{definition}

This definition may be unwieldy for showing that a function belongs to a Sobolev space, so we often work with the following equivalent condition.

\begin{theorem}[ACL Condition for $k=1$ on $\R$]
Let $\Omega \subseteq \R$ be an open set and $1 \leq p < \infty$.  Then $u \in W^{1,p}(\Omega)$ if and only if $u$ is absolutely continuous and $u, u' \in L^p(\Omega)$.  Moreover, if $u \in W^{1,p}(\Omega)$ then the strong and weak derivatives of $u$ agree.
\end{theorem}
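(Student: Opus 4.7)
The plan is to prove both implications of the equivalence separately. Since an open set $\Omega \subseteq \R$ decomposes into a countable disjoint union of open intervals, and since absolute continuity, weak differentiability, and $L^p$ integrability all localize to the components, I reduce to the case that $\Omega = (\alpha,\beta)$ is a single interval.

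For the backward direction, suppose $u$ is absolutely continuous on $\Omega$ with $u, u' \in L^p(\Omega)$, where $u'$ denotes the classical (strong) derivative. For any $\varphi \in C_c^\infty(\Omega)$, the support of $\varphi$ is contained in a compact subinterval of $\Omega$, on which the classical integration-by-parts formula holds for absolutely continuous functions; the boundary contributions vanish because $\varphi$ has compact support. Thus $\int_\Omega u \varphi'\,dx = -\int_\Omega u' \varphi\,dx$, which exhibits $u'$ as a weak derivative of $u$. Hence $u \in W^{1,p}(\Omega)$ and the strong and weak derivatives agree.

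For the forward direction, let $u \in W^{1,p}(\Omega)$ have weak derivative $v \in L^p(\Omega)$. Fix $x_0 \in \Omega$ and set $\tilde u(x) := \int_{x_0}^x v(t)\,dt$; since $v \in L^1_\text{loc}(\Omega)$, $\tilde u$ is absolutely continuous on compact subintervals of $\Omega$ with classical derivative equal to $v$ almost everywhere. The strategy is to show $u - \tilde u$ is almost everywhere constant. For any $\varphi \in C_c^\infty(\Omega)$, combining the definition of the weak derivative of $u$ with the integration-by-parts identity just established for the absolutely continuous $\tilde u$ gives $\int_\Omega (u - \tilde u)\varphi'\,dx = -\int_\Omega v \varphi \,dx + \int_\Omega v \varphi\,dx = 0$. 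From this I want to conclude that $u - \tilde u = C$ almost everywhere for some constant $C$; then $\tilde u + C$ is an absolutely continuous representative of $u$ whose classical derivative is $v \in L^p(\Omega)$, completing the proof.

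The main obstacle is precisely the last deduction, the du Bois-Reymond lemma: if $f \in L^1_\text{loc}(\Omega)$ satisfies $\int_\Omega f \varphi'\,dx = 0$ for every $\varphi \in C_c^\infty(\Omega)$, then $f$ is a.e. equal to a constant. The standard proof fixes $\eta \in C_c^\infty(\Omega)$ with $\int \eta = 1$ and, for arbitrary $\psi \in C_c^\infty(\Omega)$, constructs a test function $\varphi(x) := \int_{-\infty}^x \left(\psi(t) - \left(\int \psi\right)\eta(t)\right)dt$, which lies in $C_c^\infty(\Omega)$; applying the hypothesis to this $\varphi$ yields $\int f \psi\,dx = \left(\int f \eta\,dx\right)\int \psi\,dx$, which forces $f$ to equal the constant $\int f\eta$ a.e. Rather than reproduce this in detail, I would invoke the lemma as a standard result from distribution theory. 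Everything else in the proof is routine application of the fundamental theorem of calculus and integration by parts.
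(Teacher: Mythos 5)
Your proof is correct and follows the standard textbook argument for this ACL characterization. The paper does not itself prove this theorem --- it cites \cite{leonibook} for the proof --- so there is no in-paper argument to compare against; your reduction to a single interval, the integration-by-parts computation for the backward implication, and the use of the antiderivative $\tilde u$ together with the du Bois--Reymond lemma for the forward implication constitute precisely the canonical proof one would find in such a reference.
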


Absolute continuity of $L^p$ functions is discussed in the sense that there exists an absolutely continuous representative, and similarly for differentiability.  A proof of this condition may be found in \cite{leonibook}.

An analogue of this condition exists for $k=2$.

\begin{theorem}[ACL Condition for $k=2$ on $\R$]\label{ACLcondition}
Let $\Omega \subseteq \R$ be an open set and $1 \leq p < \infty$.  Then $u \in W^{2,p}(\Omega)$ if and only if $u \in C^1(\Omega)$, $u'$ is absolutely continuous, and $u,u',u'' \in L^p(\Omega)$.  Moreover, if $u \in W^{2,p}(\Omega)$ then the first and second-order strong derivatives of $u$ agree with their weak analogues.
\end{theorem}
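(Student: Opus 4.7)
The strategy is to bootstrap the $k=2$ statement from the already-established $k=1$ ACL condition by viewing $u'$ as an element of $W^{1,p}(\Omega)$ in its own right. The key algebraic bridge is that if $v$ satisfies $\int_\Omega u\varphi''\,dx = \int_\Omega v\varphi\,dx$ for all $\varphi \in C^\infty_c(\Omega)$ (the definition of $v = u''$ as the second weak derivative of $u$), then applying the first-order defining relation $\int_\Omega u\psi'\,dx = -\int_\Omega u'\psi\,dx$ with $\psi = \varphi'$ yields $\int_\Omega u'\varphi'\,dx = -\int_\Omega v\varphi\,dx$, so the same $v$ serves as the first weak derivative of $u'$.

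For the forward direction, I would use this bridge to deduce $u' \in W^{1,p}(\Omega)$, so the $k=1$ theorem produces an absolutely continuous representative $v$ of $u'$ whose strong derivative equals $u''$ almost everywhere. In parallel, $u \in W^{1,p}(\Omega)$ admits an absolutely continuous representative $\tilde{u}$ satisfying $\tilde{u}' = u' = v$ a.e. On each connected component of $\Omega$, absolute continuity gives $\tilde{u}(x) = \tilde{u}(x_0) + \int_{x_0}^x v(t)\,dt$, and since $v$ is continuous the fundamental theorem of calculus upgrades this to $\tilde{u} \in C^1$ with $\tilde{u}' = v$ pointwise; the absolute continuity of $v$ itself then supplies the remaining regularity of $\tilde{u}'$.

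For the reverse direction, suppose $u \in C^1(\Omega)$, $u'$ is absolutely continuous, and $u, u', u'' \in L^p(\Omega)$. Because $u$ is $C^1$ and $u' \in L^p(\Omega)$, the $k=1$ theorem gives $u \in W^{1,p}(\Omega)$ with weak derivative equal to the strong derivative a.e., and the same result applied to $u'$ gives $u' \in W^{1,p}(\Omega)$ with weak derivative $u''$. Then for $\varphi \in C^\infty_c(\Omega)$, a double integration by parts --- legitimate because both $u$ and $u'$ are absolutely continuous on the compact support of $\varphi$ --- yields $\int_\Omega u\varphi''\,dx = \int_\Omega u''\varphi\,dx$, placing $u$ in $W^{2,p}(\Omega)$. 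The subtlest step I anticipate is the representative juggling in the forward direction: promoting an almost-everywhere identity between $\tilde{u}'$ and the continuous function $v$ into a pointwise $C^1$ statement requires carefully redefining the representative of $u$ on a null set so that its strong derivative is \emph{exactly} $v$ rather than merely a.e.~equal to it, which is what lets the fundamental theorem of calculus operate on a continuous integrand; everything else reduces cleanly to the $k=1$ result and standard integration by parts.
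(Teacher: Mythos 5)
Your proof is correct. Note that the paper does not actually supply a proof of this theorem; it simply cites a reference (\cite{ACL}) and moves on, so there is no in-paper argument to compare against. Your bootstrap strategy --- showing via the substitution $\psi = \varphi'$ that the second weak derivative of $u$ is the first weak derivative of $u'$, then invoking the $k=1$ ACL characterization twice and patching with the fundamental theorem of calculus --- is the standard and clean way to obtain the $k=2$ statement from the $k=1$ one, and each step you give is sound. One minor remark: the ``representative juggling'' you flag as the subtlest step is actually handled automatically by your own FTC identity. Once you have $\tilde{u}(x) = \tilde{u}(x_0) + \int_{x_0}^x v(t)\,dt$ on each component, with $v$ the continuous (absolutely continuous) representative of $u'$, the fundamental theorem of calculus gives $\tilde{u}'(x) = v(x)$ at \emph{every} point for that same $\tilde{u}$, with no need to further redefine the representative on a null set; the integral formula already selects the right pointwise values.
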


A proof of this may be found in \cite{ACL}.


Various algebraic properties satisfied by strong derivatives have analogues for weak derivatives.  For instance, we have the following chain rule.

\begin{theorem}[Chain Rule]\label{chain}
Let $\Omega \subseteq \R$ be an open, bounded interval.  Suppose $f:\R \to \R$ is Lipschitz and $u \in W^{1,p}(\Omega)$.  Then $f(u) \in W^{1,p}(\Omega)$ and $\d{}{x} f(u) = f'(\tilde{u})u'$, where $\tilde{u}$ is the absolutely continuous representative of $u$, and we take $f'(\tilde{u}(x))u'(x)$ to be 0 whenever $u'(x)= 0$.
\end{theorem}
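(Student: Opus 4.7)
The plan is to leverage the ACL characterization of $W^{1,p}(\Omega)$ from Theorem 2.2.2: any $u \in W^{1,p}(\Omega)$ has an absolutely continuous representative $\tilde u$ with $\tilde u' = u'$ almost everywhere, and conversely absolute continuity plus $L^p$ integrability of the function and its classical derivative give membership in $W^{1,p}$. So my task reduces to showing that $f(\tilde u)$ is absolutely continuous, identifying its classical derivative almost everywhere as $f'(\tilde u(x))u'(x)$ (under the stated convention), and verifying $L^p$ integrability of $f(\tilde u)$ and this derivative.

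First I would dispatch absolute continuity and integrability. If $L$ denotes the Lipschitz constant of $f$, then for any finite collection of disjoint intervals $(a_i,b_i) \subset \Omega$ one has $\sum |f(\tilde u(b_i)) - f(\tilde u(a_i))| \leq L \sum |\tilde u(b_i) - \tilde u(a_i)|$, so absolute continuity of $\tilde u$ transfers to $f \circ \tilde u$. Hence $f \circ \tilde u$ is differentiable almost everywhere with $(f\circ \tilde u)' \in L^1_{\text{loc}}$, and the classical derivative coincides with the weak derivative. Moreover $|f(\tilde u(x))| \leq |f(0)| + L|\tilde u(x)|$, which is in $L^p(\Omega)$ since $\Omega$ is bounded, and the pointwise bound $|f'(\tilde u)u'| \leq L|u'| \in L^p(\Omega)$ handles the derivative's integrability.

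The main obstacle is identifying the weak derivative as $f'(\tilde u)u'$. At every $x \in \Omega$ for which (i) $\tilde u'(x)$ exists, (ii) $f$ is differentiable at $\tilde u(x)$, and (iii) $(f \circ \tilde u)'(x)$ exists, the classical chain rule from elementary calculus gives $(f\circ \tilde u)'(x) = f'(\tilde u(x))\tilde u'(x)$. The set where (i) or (iii) fails has Lebesgue measure zero by absolute continuity. The subtlety is (ii): since $f$ is only Lipschitz, Rademacher's theorem in one dimension tells us $f$ fails to be differentiable on some null set $N \subset \R$, and $\tilde u^{-1}(N)$ need not be null. Here I would invoke the classical fact that an absolutely continuous function has vanishing derivative almost everywhere on the preimage of a null set: namely, $\tilde u'(x) = 0$ for a.e.\ $x \in \tilde u^{-1}(N)$. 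This is provable, for example, by writing $\tilde u^{-1}(N) \cap \{|\tilde u'| \geq \delta\}$ and using the Vitali covering argument that $\tilde u$ maps this set into $N$ with a measure estimate $\delta \cdot \meas(\tilde u^{-1}(N) \cap \{|\tilde u'| \geq \delta\}) \leq \meas(\tilde u(\tilde u^{-1}(N))) \leq \meas(N) = 0$, then letting $\delta \to 0^+$.

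With that lemma in hand, the argument closes cleanly. On $\tilde u^{-1}(N)$ we have $\tilde u'(x) = 0$ a.e., so by the Lipschitz bound $|(f\circ \tilde u)'(x)| \leq L|\tilde u'(x)| = 0$ almost everywhere there, matching the stated convention that $f'(\tilde u(x))u'(x) := 0$ whenever $u'(x) = 0$; simultaneously the putative formula is well defined because we never have to evaluate $f'$ at an actually problematic point where $u' \neq 0$. Combining this with the classical chain rule on the complement $\Omega \setminus \tilde u^{-1}(N)$ yields $(f \circ \tilde u)'(x) = f'(\tilde u(x))u'(x)$ for a.e.\ $x \in \Omega$, completing the proof.
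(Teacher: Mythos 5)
The paper offers no proof for this theorem; it simply cites Exercise 11.51 in \cite{leonibook}, so there is no in-text argument to compare against. Your plan follows the classical route --- reduce to the absolutely continuous representative via the ACL characterization, observe that $f \circ \tilde u$ is absolutely continuous and $L^p$, and then identify the a.e.\ derivative, handling the bad set $N$ of non-differentiability of $f$ by showing $\tilde u' = 0$ a.e.\ on $\tilde u^{-1}(N)$. That structure is sound and is essentially what the cited exercise is asking for.

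There is, however, a genuine flaw in the way you justify the auxiliary lemma. The displayed inequality
$$\delta \cdot \meas\bigl(\tilde u^{-1}(N) \cap \{|\tilde u'| \geq \delta\}\bigr) \leq \meas\bigl(\tilde u(\tilde u^{-1}(N))\bigr)$$
is false in general: take $\tilde u(x) = |x|$ on $[-1,1]$, $N$ replaced by $[0,1]$, and $\delta = 1$; the left side is $2$ and the right side is $1$. The failure is caused by non-injectivity of $\tilde u$, and a Vitali cover of the \emph{domain} cannot repair it because overlapping preimages collapse into the same target interval. The conclusion you want is nonetheless correct --- if $g$ is differentiable at each point of a measurable set $E$ and $\meas(g(E)) = 0$, then $g' = 0$ a.e.\ on $E$ --- but the covering argument must be run on the \emph{target} side: cover the null set $g(E)$ by open intervals $\{I_j\}$ of total length $< \eta$, and for points $x \in E$ with $|g'(x)| \geq \delta$ show that each ``cluster'' of such $x$ mapping into a fixed $I_j$ has diameter controlled by $|I_j|/\delta$, or alternatively decompose $\{x \in E : |g'(x)| \geq \delta\}$ into countably many pieces on each of which $g$ is bi-Lipschitz (a Lusin-type decomposition) and transfer the null-image condition piece by piece. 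Once that lemma is correctly proved, the remainder of your argument --- invoking the classical chain rule off $\tilde u^{-1}(N)$ and using the convention that the right-hand side vanishes where $u' = 0$ --- closes the proof cleanly.
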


This is a consequence of Exercise 11.51 in \cite{leonibook}.

It is useful to obtain a bound for $\int_a^b |u'|^2\,dx$ in terms of $\int_a^b |u|^2\,dx$ and $\int_a^b |u''|^2\,dx$ for $u \in W^{2,p}(a,b)$.  The following two results are special cases of Lemma 7.38 and Theorem 7.37 in \cite{leonibook}.

\begin{lemma}\label{inter1}
Let $I = (a,b)$.  Suppose $u \in W^{2,1}(I)$ such that $u'$ has at least one zero in $[a,b]$.  Then there exists a universal constant $c > 0$ such that
$$\int_a^b |u'|^2\,dx \leq c\left(\int_a^b u^2\,dx\right)^{1/2}\left(\int_a^b |u''|^2\,dx\right)^{1/2}.$$
\end{lemma}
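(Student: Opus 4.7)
The plan is to prove this Gagliardo-Nirenberg-type interpolation inequality by combining a general interpolation bound with a Poincar\'e-type estimate afforded by the zero-of-$u'$ hypothesis. By the substitution $y = (x-a)/(b-a)$, both sides of the inequality rescale by a common factor, so we may assume $I = (0,1)$; write $A = \|u'\|_{L^2}$, $B = \|u\|_{L^2}$, $D = \|u''\|_{L^2}$.

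First, the general Gagliardo-Nirenberg interpolation inequality on a bounded interval, which is a standard result (a special case of \cite[Theorem~7.37]{leonibook}, provable by an integration-by-parts argument with a cutoff function), yields universal constants $C_1, C_2 > 0$ such that $A^2 \leq C_1 BD + C_2 B^2$ for every $u \in W^{2,2}(0,1)$. Second, the hypothesis $u'(x_0) = 0$ gives an additional Poincar\'e-type bound: for any $x \in [0,1]$, writing $u'(x) = \int_{x_0}^x u''(t)\,dt$ and applying Cauchy-Schwarz gives $|u'(x)|^2 \leq |x-x_0|\,D^2$, and integrating in $x$ yields $A^2 \leq \int_0^1 |x-x_0|\,dx \cdot D^2 \leq D^2/2$.

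To conclude, I split into cases based on the ratio of $B$ to $D$. If $B \leq D$, then $C_2 B^2 \leq C_2 BD$, and the general Gagliardo-Nirenberg inequality yields $A^2 \leq (C_1+C_2)BD$. If $B > D$, then $D^2 \leq BD$, and the Poincar\'e-type bound gives $A^2 \leq D^2/2 \leq BD/2$. In either case we obtain $A^2 \leq cBD$ with $c = \max\{C_1+C_2,\,1/2\}$, which is precisely the desired inequality.

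The main obstacle is establishing the general Gagliardo-Nirenberg inequality, which is a standard but technically nontrivial result requiring a careful argument with a suitable cutoff function. The role of the zero-of-$u'$ hypothesis is subtle: on its own it affords only the weaker bound $A^2 \leq D^2/2$, but combined with the general inequality via the case split above, it precisely eliminates the $C_2 B^2$ lower-order term that would otherwise prevent the cleaner form $A^2 \leq cBD$ from holding.
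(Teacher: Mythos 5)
Your proof is correct. After the valid scaling reduction (both sides scale by $(b-a)^{-1}$, so $I = (0,1)$ suffices), you combine the unconditional interpolation estimate $A^2 \leq C_1 BD + C_2 B^2$ (which follows from Theorem~\ref{inter2} by optimizing over $l$) with the elementary Poincar\'e bound $A^2 \leq D^2/2$ obtained from $u'(x) = \int_{x_0}^x u''$ and Cauchy--Schwarz, and the case split on $B \lessgtr D$ cleanly eliminates the $B^2$ term. The paper itself gives no argument here --- Lemma~\ref{inter1} is simply cited as a special case of Lemma~7.38 in Leoni's textbook --- so your proposal is a genuine contribution: it shows that Lemma~\ref{inter1} follows from Theorem~\ref{inter2} (which the paper also takes from the textbook) by elementary means, reducing the two black boxes to one. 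Two small points you should make explicit: first, you may assume $B, D < \infty$ (otherwise the inequality is vacuous), and then $u' \in L^2$ follows from Theorem~\ref{inter2}, so $u \in W^{2,2}(0,1)$ as your invocation of the interpolation estimate requires; second, the constant in the Poincar\'e step comes from $\int_0^1 |x-x_0|\,dx = \tfrac{x_0^2 + (1-x_0)^2}{2} \leq \tfrac12$, which holds uniformly over $x_0 \in [0,1]$ and is worth stating since $x_0$ may be an endpoint (permissible because $u' \in W^{1,1}(0,1)$ extends continuously to $[0,1]$).
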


\begin{theorem}\label{inter2}
For an open interval $I$ and $u \in W^{2,1}(I)$, there exists a universal constant $c > 0$ such that
$$\left(\int_I |u'|^2 \,dx\right)^{1/2} \leq cl^{-1}\left(\int_I u^2 \,dx\right)^{1/2} + cl\left(\int_I |u''|^2 \,dx\right)^{1/2}$$
for every $l$ with $0 < l < \leb^1(I)$.
\end{theorem}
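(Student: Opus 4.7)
Since $(A+B)^2 \geq A^2 + B^2$ for non-negative $A,B$, it suffices to prove the squared form
$$\int_I|u'|^2\,dx \leq Cl^{-2}\int_I u^2\,dx + Cl^2\int_I|u''|^2\,dx$$
for some universal constant $C$, after which taking square roots gives the theorem (with $c = \sqrt{C}$). I will cover $I$ by intervals $K_m$ of length exactly $l$ with bounded overlap (the final $K_m$ is permitted to overlap its predecessor to absorb the remainder of $I$, which is possible since $l < \leb^1(I)$), prove the local analogue on each $K_m$, and sum; a pointwise overlap bound independent of $l$ will absorb into the constant.

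\textbf{The local bound on $K = [c,c+l]$.} Lemma \ref{inter1} would give the local bound immediately if $u'$ vanished somewhere on $K$, but this need not happen. I first construct a point $\zeta \in K$ with $u'(\zeta)$ controlled purely by $\int_K u^2\,dx$: partition $K$ into three equal thirds; apply the intermediate value theorem to the continuous function $u^2$ on each outer third to produce points $\xi$ in the left third and $\eta$ in the right third with $u(\xi)^2,\,u(\eta)^2 \leq (3/l)\int_K u^2\,dx$; then, since $\eta - \xi \geq l/3$, apply the classical mean value theorem to $u$ on $[\xi,\eta]$ to obtain $\zeta \in (\xi,\eta)$ with $|u'(\zeta)| \leq 3l^{-1}(|u(\xi)| + |u(\eta)|)$, hence $|u'(\zeta)|^2 \leq Cl^{-3}\int_K u^2\,dx$. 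Setting $v(x) := u(x) - u'(\zeta)(x-\zeta)$ then gives $v'(\zeta) = 0$ and $v'' = u''$, so Lemma \ref{inter1} combined with Young's inequality (with weight $l^2$) yields
$$\int_K |v'|^2\,dx \leq \frac{c}{2l^2}\int_K v^2\,dx + \frac{cl^2}{2}\int_K|u''|^2\,dx.$$
Using the pointwise bounds $v^2 \leq 2u^2 + 2u'(\zeta)^2(x-\zeta)^2$ (so $\int_K v^2 \leq 2\int_K u^2 + 2l^3\, u'(\zeta)^2$, since $|x-\zeta| \leq l$) and $(u')^2 \leq 2(v')^2 + 2u'(\zeta)^2$ (so $\int_K|u'|^2 \leq 2\int_K|v'|^2 + 2l\, u'(\zeta)^2$), together with the control on $u'(\zeta)^2$ from the three-thirds step, gives the local squared bound; summing over $m$ and taking square roots finishes the proof.

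\textbf{Main obstacle.} The heart of the argument is the construction of the control point $\zeta$: because $u'$ need not vanish on $K$, Lemma \ref{inter1} does not apply to $u$ directly, and without a quantitative bound on $u'$ at a single point the linear correction $u'(\zeta)(x-\zeta)$ introduced to force a zero of the derivative cannot itself be estimated. The three-thirds construction is precisely what extracts the missing control: the intermediate value theorem applied to $u^2$ locates nearby points where $u$ itself is small in terms of $\int_K u^2$, and the classical mean value theorem then transfers that smallness to a value of $u'$.
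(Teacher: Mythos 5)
Your proof is correct. For context: the paper does not supply its own argument for this statement, instead recording it as a special case of Theorem~7.37 in the cited Leoni reference, so there is no in-paper proof to compare against; your self-contained derivation stands on its own. The covering of $I$ by length-$l$ tiles with overlap at most two, the local reduction to Lemma~\ref{inter1}, and the sum-then-square-root reassembly (using $\sqrt{a+b}\le\sqrt a+\sqrt b$) all work as claimed. The heart of the argument is, as you correctly identify, the construction of the control point $\zeta$: subtracting the linear function $u'(\zeta)(x-\zeta)$ to manufacture a zero of the derivative is only useful if $u'(\zeta)$ itself is controlled, and the three-thirds mean-value step supplies exactly the bound $|u'(\zeta)|^2 \leq C l^{-3}\int_K u^2\,dx$ that lets the resulting error terms $l\,u'(\zeta)^2$ and $l^3 u'(\zeta)^2/l^2$ be absorbed into $l^{-2}\int_K u^2\,dx$. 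Two cosmetic remarks: (i) the step producing $\xi$ in the left third with $u(\xi)^2 \leq (3/l)\int_K u^2\,dx$ is not the intermediate value theorem but the elementary fact that a continuous function falls at or below its average somewhere on any interval; (ii) it is worth noting explicitly that $u \in W^{2,1}$ on a bounded tile extends to $C^1$ on the closed tile (since $u'' \in L^1$ makes $u'$ absolutely, hence uniformly, continuous on a bounded interval), which is what licenses the pointwise evaluations of $u$ and $u'$ and the application of Lemma~\ref{inter1} with a zero of the derivative in the closed interval. Neither remark affects the correctness of the argument.
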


We note that by applying the QM-AM inequality to the above theorem, we may obtain the inequality
$$\int_I |u'|^2\,dx \leq c'l^{-2}\int_I u^2\,dx + c'l^2\int_I |u''|^2\,dx$$
under the same conditions but for a different universal constant $c' > 0$.

\subsection{Weak Convergence}

\begin{definition}[Weak Convergence in $L^p$]
Let $1 \leq p < \infty$.  Let $E \subseteq \R$ be measurable, and let $u_n \in L^p(E)$ for all $n \in \N$.  For $u \in L^p(E)$, we say that $u_n$ converges weakly to $u$ in $L^p(E)$ if 
$$\lim_{n \to \infty} \int_E u_nv\,dx = \int_E uv\,dx$$
for all $v \in L^{p'}(E)$, where $p' \in [1,\infty]$ is such that $\frac1p+\frac1{p'} = 1$, and we write $u_n \wc u$ in $L^p(E)$.
\end{definition}

\begin{definition}[Weak Convergence in $W^{k,p}$]
For $\Omega \subseteq \R$ open, $k \in \mathbb{N}$, $1 \leq p < \infty$, and $u_n,u \in W^{k,p}(\Omega)$, we say that $u_n$ converges weakly in $W^{k,p}(\Omega)$ if $u_n^{(l)} \wc u^{(l)}$ in $L^p(\Omega)$ for every $0 \leq l \leq k$, and we write $u_n \wc u$ in $W^{k,p}(\Omega)$.
\end{definition}

It can be shown that weak limits are unique up to almost-everywhere equivalence.  Moreover, H\"older's inequality implies that strong convergence in $L^p(E)$ implies weak convergence in $L^p(E)$, and consequently strong convergence in $W^{k,p}(\Omega)$ implies weak convergence in $W^{k,p}(\Omega)$.

Proofs of the theorems that follow may be found in \cite{bigbook}.  An important property is that we may strengthen some convergences in the definition of weak convergence in $W^{k,p}(\Omega)$.

\begin{theorem}\label{weakstronger}
Let $(a,b)$ be an interval, $k \in \N$, $1 \leq p < \infty$, and suppose $u_n,u \in W^{k,p}(a,b)$ are such that $u_n \wc u$ in $W^{k,p}(a,b)$.  Then $u_n^{(l)} \to u^{(l)}$ strongly in $L^p(a,b)$ for all $0 \leq l \leq k-1$.
\end{theorem}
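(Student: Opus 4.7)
The plan is to reduce the statement to the Rellich--Kondrachov compactness theorem, specifically the compact embedding $W^{1,p}(a,b) \hookrightarrow L^p(a,b)$ available because $(a,b)$ is bounded. This is exactly the kind of tool that turns weak convergence into strong convergence, and it is the natural engine to invoke here.

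First, I would fix $0 \leq l \leq k-1$ and observe that by the definition of weak convergence in $W^{k,p}$, both $u_n^{(l)} \wc u^{(l)}$ and $u_n^{(l+1)} \wc u^{(l+1)}$ in $L^p(a,b)$. Since $u_n^{(l+1)}$ is the weak derivative of $u_n^{(l)}$ (using Theorem \ref{ACLcondition} or the analogous first-order ACL result to identify weak and strong derivatives), and since the weak limit in $L^p$ of weak derivatives is again the weak derivative of the weak limit, this upgrades to $u_n^{(l)} \wc u^{(l)}$ in $W^{1,p}(a,b)$. In particular, the Banach--Steinhaus theorem gives $\sup_n \|u_n^{(l)}\|_{W^{1,p}(a,b)} < \infty$.

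Next, I would apply Rellich--Kondrachov to the bounded interval $(a,b)$: the embedding $W^{1,p}(a,b) \hookrightarrow L^p(a,b)$ is compact. Hence every subsequence of $(u_n^{(l)})$ has a further subsequence converging strongly in $L^p(a,b)$ to some limit. By the uniqueness of weak limits and the fact that strong $L^p$ convergence implies weak $L^p$ convergence, any such limit must coincide with $u^{(l)}$. The standard subsequence-of-every-subsequence argument then yields that the whole sequence $u_n^{(l)}$ converges strongly to $u^{(l)}$ in $L^p(a,b)$.

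The only mild obstacle is the bookkeeping in the first step: one has to be sure that the weak $L^p$ limits of $u_n^{(l)}$ and $u_n^{(l+1)}$ really do assemble into a weak $W^{1,p}$ convergence, i.e.\ that $u^{(l+1)}$ is the weak derivative of $u^{(l)}$. This follows by passing to the limit in the identity $\int_a^b u_n^{(l)} \varphi'\,dx = -\int_a^b u_n^{(l+1)}\varphi\,dx$ for $\varphi \in C_c^\infty(a,b)$, using weak convergence on both sides. Beyond that, the argument is just the canonical compact embedding plus limit-uniqueness strategy, and nothing deeper is required.
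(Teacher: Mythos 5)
Your proof is correct. The paper states this result without proof (it is cited to a reference on measure theory and Sobolev spaces), so there is no in-paper argument to compare against, but your route is the standard one: from the paper's definition of weak convergence in $W^{k,p}$, one immediately has $u_n^{(l)} \wc u^{(l)}$ in $W^{1,p}(a,b)$ for each $0 \le l \le k-1$; Banach--Steinhaus gives a uniform $W^{1,p}$ bound; Rellich--Kondrachov (valid in 1D for all $1 \le p < \infty$, including the borderline case $p=1$, since there $W^{1,1}(a,b) \hookrightarrow L^q(a,b)$ is compact for every finite $q$) gives precompactness in $L^p$; and the subsequence-of-a-subsequence argument with uniqueness of weak limits identifies every cluster point with $u^{(l)}$. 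Your ``bookkeeping'' paragraph about identifying $u^{(l+1)}$ as the weak derivative of $u^{(l)}$ is actually automatic from $u \in W^{k,p}$ rather than something that needs to be re-derived by passing to the limit in the integration-by-parts identity, but it does no harm. One point worth making explicit: the hypothesis must include that $(a,b)$ is bounded (as you noted), since the statement fails on $\R$ (translates of a fixed bump converge weakly to $0$ in $W^{k,p}(\R)$ but not strongly in $L^p$); this is consistent with the paper's use of $\Omega=(a,b)$ as a bounded interval throughout.
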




Weak convergence enables us to consider a notion of \textit{weak compactness}.

\begin{theorem}[Weak Compactness in $W^{k,p}$]\label{weakcompactness}
Let $\Omega \subseteq \R$ be open, $k \in \N$, $1 \leq p < \infty$, and suppose that $u_n \in W^{k,p}(\Omega)$ is such that $\{u_n\}_n$ is uniformly bounded on $W^{k,p}(\Omega)$.  That is, $\sup_{n \in \N} \|u_n\|_{W^{k,p}(\Omega)} < \infty$.  Then there exist a subsequence $\{u_{n_ll}\}_l$ and $u \in W^{k,p}(\Omega)$ such that $u_{n_l} \wc u$ in $W^{k,p}(\Omega)$.
\end{theorem}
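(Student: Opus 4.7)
My plan is to reduce the statement to the standard weak sequential compactness for bounded sequences in $L^p(\Omega)$, applied separately to each derivative order, and then to identify the resulting weak limits as the derivatives of a single $W^{k,p}$ function by passing to the limit in the distributional definition of the weak derivative.

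From $\sup_n \|u_n\|_{W^{k,p}(\Omega)} < \infty$ and the definition of the $W^{k,p}$ norm, each of the $k+1$ sequences $\{u_n^{(l)}\}_n$ for $l = 0, 1, \ldots, k$ is uniformly bounded in $L^p(\Omega)$. For $1 < p < \infty$, $L^p(\Omega)$ is reflexive, so the Banach--Alaoglu theorem combined with the Eberlein--\v{S}mulian theorem guarantees that every bounded sequence in $L^p(\Omega)$ has a weakly convergent subsequence. Extracting subsequences $k+1$ times in succession (no diagonal argument is needed because $k+1$ is finite), I obtain a single subsequence, still denoted $\{u_n\}_n$, together with functions $v_0, v_1, \ldots, v_k \in L^p(\Omega)$ such that $u_n^{(l)} \wc v_l$ in $L^p(\Omega)$ for every $l \in \{0,1,\ldots,k\}$.

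To close the argument, set $u := v_0$ and identify $v_l$ as the $l$-th weak derivative of $u$. Fix $\varphi \in C_c^\infty(\Omega)$; note that $\varphi^{(l)} \in C_c^\infty(\Omega) \subseteq L^{p'}(\Omega)$, so it is a legitimate test function for the weak-$L^p$ convergences. By the two weak convergences together with the definition of the weak derivative for each $u_n \in W^{k,p}(\Omega)$,
$$\int_\Omega u \, \varphi^{(l)} \, dx = \lim_{n \to \infty} \int_\Omega u_n \, \varphi^{(l)} \, dx = \lim_{n \to \infty} (-1)^l \int_\Omega u_n^{(l)} \varphi \, dx = (-1)^l \int_\Omega v_l \varphi \, dx.$$
Since $\varphi$ was arbitrary and each $v_l$ lies in $L^p(\Omega)$, this says exactly that $u \in W^{k,p}(\Omega)$ with $u^{(l)} = v_l$ for every $l$, which in turn is the definition of $u_n \wc u$ in $W^{k,p}(\Omega)$.

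The main obstacle is the borderline case $p = 1$: the space $L^1(\Omega)$ fails to be reflexive, and uniform boundedness in $L^1$ alone does not produce a weakly convergent subsequence. Here one must instead invoke the Dunford--Pettis theorem and verify equi-integrability of each of the sequences $\{u_n^{(l)}\}_n$. This is delicate at the top order $l = k$, where only a uniform $L^1$ bound on $u_n^{(k)}$ is available with no higher-order control to upgrade it, and at this point I would defer to the careful treatment in \cite{bigbook}. For $1 < p < \infty$, which is the range actually used in the applications to $F_\epsilon$ and $G_\epsilon$ in this paper, reflexivity bypasses this difficulty cleanly.
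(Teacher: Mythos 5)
Your argument for $1 < p < \infty$ is correct and is the standard one: reflexivity of $L^p$ gives weak sequential compactness for each bounded sequence $\{u_n^{(l)}\}_n$, finitely many successive extractions produce a common subsequence, and passing to the limit in the defining identity $\int_\Omega u_n \varphi^{(l)}\,dx = (-1)^l \int_\Omega u_n^{(l)}\varphi\,dx$ identifies each $v_l$ as $u^{(l)}$ and places $u$ in $W^{k,p}(\Omega)$. The paper does not supply its own proof of this lemma (it defers to \cite{bigbook}), so there is no alternative route to compare against. You are right to flag $p = 1$, but you are too tentative about it: no refinement via Dunford--Pettis salvages the statement there, because it is simply false for $p = 1$. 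Take $\Omega = (-1,1)$, $k = 1$, and $u_n(x) = \min\bigl(1, \max(0, nx)\bigr)$. Then $\|u_n\|_{L^1(\Omega)} \leq 2$ and $\|u_n'\|_{L^1(\Omega)} = 1$, so $\{u_n\}_n$ is bounded in $W^{1,1}(\Omega)$, yet $u_n' = n\,1_{(0,1/n)}$ converges in the sense of distributions to $\delta_0$; if some subsequence satisfied $u_{n_j}' \wc v$ in $L^1(\Omega)$, testing against $\varphi \in C_c^\infty(\Omega)$ would force $\int_\Omega v\varphi\,dx = \varphi(0)$ for all such $\varphi$, i.e.\ $v = \delta_0 \notin L^1(\Omega)$. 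Equi-integrability of the top-order derivatives is exactly what fails, and this is the canonical example of that failure. The correct hypothesis for the theorem is $1 < p < \infty$; as you observe, the only place the paper invokes it is in the proof of Lemma \ref{psicompact} with $p = 2$, so nothing downstream is affected, but the statement as written should be amended.
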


Lastly, weak convergence is useful for obtaining a weak form of semi-continuity for certain functions.  A particular result we shall use is the following.

\begin{theorem}\label{convexslsc}
Let $1 \leq p < \infty$ and let $\Omega \subseteq \mathbb{R}^N$ be open and bounded. Suppose that $f:\mathbb{R} \to \mathbb{R}$ is convex. Then the integral functional $F:L^p(\Omega) \to \overline{\mathbb{R}}$ defined by
$$F(u) := \int_\Omega f(u)\,dx$$
is sequentially lower semi-continuous with respect to weak $L^p_{\text{loc}}(\Omega)$ convergence.  That is, we have
$$F(u) \leq \liminf_{n \to \infty} F(u_n)$$
whenever $u_n,u \in L^p(\Omega)$  such that $u_n \wc u$ in $L^p_\text{loc}(\Omega)$.
\end{theorem}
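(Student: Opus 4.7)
The plan is to use the classical Mazur-plus-Fatou scheme for weak lower semi-continuity of convex integrals, after first reducing to the case where $f \ge 0$. This is the standard route: convexity alone does not give continuity under weak convergence, but it gives the one-sided inequality via Jensen applied to strongly convergent convex combinations.

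For the reduction, I note that any convex $f: \mathbb{R} \to \mathbb{R}$ admits an affine minorant: choosing $\xi_0$ in the subdifferential of $f$ at $0$ gives $f(x) \ge f(0) + \xi_0 x$ for all $x$, so $g(x) := f(x) - f(0) - \xi_0 x$ is nonnegative and convex. Writing
$$F(u) = \int_\Omega g(u)\,dx + \xi_0 \int_\Omega u\,dx + f(0)|\Omega|$$
splits $F$ into a convex nonnegative integral and an affine remainder. Since $\Omega$ is bounded we have $1_\Omega \in L^{p'}(\Omega)$, so up to the subtlety discussed below the affine part is weakly continuous, and it suffices to prove weak lower semi-continuity of $u \mapsto \int_\Omega g(u)\,dx$ for the nonnegative convex $g$.

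For the main step, I fix a compact $K \subset\subset \Omega$, observe that the restrictions $u_n|_K$ converge weakly to $u|_K$ in $L^p(K)$, and apply Mazur's lemma to obtain convex combinations $v_n = \sum_{k \ge n} \lambda_{n,k} u_k$ (with finitely many nonzero weights $\lambda_{n,k} \ge 0$ summing to one) converging strongly to $u$ in $L^p(K)$; passing to a subsequence, $v_n \to u$ pointwise a.e.\ on $K$. Convexity of $g$ yields the pointwise bound $g(v_n) \le \sum_{k \ge n} \lambda_{n,k} g(u_k)$, and nonnegativity of $g$ lets me enlarge the integral on the right from $K$ to $\Omega$, so that
$$\int_K g(v_n)\,dx \le \sum_{k \ge n} \lambda_{n,k} \int_\Omega g(u_k)\,dx \le \sup_{k \ge n} \int_\Omega g(u_k)\,dx.$$
Continuity of the real-valued convex $g$ and Fatou's lemma (valid since $g \ge 0$) then give $\int_K g(u)\,dx \le \liminf_n \int_K g(v_n)\,dx \le \liminf_n \int_\Omega g(u_n)\,dx$, where I have first passed to a subsequence realising this last liminf. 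Exhausting $\Omega$ by an increasing sequence of compact sets $K_m \uparrow \Omega$ and applying monotone convergence (again since $g \ge 0$) upgrades this to $\int_\Omega g(u)\,dx \le \liminf_n \int_\Omega g(u_n)\,dx$, which, combined with the affine part, yields $F(u) \le \liminf_n F(u_n)$.

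The main subtlety I expect is the interplay between weak $L^p_{\mathrm{loc}}$ convergence and the affine remainder: the constant test function $1_\Omega$ is not compactly supported in $\Omega$, so recovering $\int_\Omega u_n \to \int_\Omega u$ is not immediate from the hypothesis alone and requires approximating $1_\Omega$ by compactly supported $L^{p'}$ functions together with some uniform $L^p$ control on $\{u_n\}$, which in the paper's intended applications is furnished by $\liminf_n F(u_n) < \infty$ combined with superlinear growth of $f$. Apart from this technicality, the argument is a routine convexity-plus-Fatou calculation.
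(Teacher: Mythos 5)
Your Mazur-plus-Fatou scheme is the standard textbook route and is correct for nonnegative convex integrands; since the paper gives no proof of Theorem~\ref{convexslsc} (it refers to Theorem~5.14 of the cited reference) and only ever applies it with $f(z)=z^2\geq 0$ (Step~5 of Lemma~\ref{betacts}), your core argument with $g=f$, and no affine reduction at all, already covers everything the paper actually needs.

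The affine reduction itself, however, cannot be deferred as a ``technicality'': without a lower-bound hypothesis on $f$, the statement as written is simply false. Take $N=1$, $\Omega=(0,1)$, $f(z)=z$, $u\equiv 0$, and $u_n=-n\cdot 1_{(0,1/n)}$. For every $V\Subset\Omega$ one has $u_n\equiv 0$ on $V$ for all large $n$, so $u_n\to 0$ in $L^p_{\mathrm{loc}}(\Omega)$ (even strongly), yet $F(u_n)=\int_0^1 u_n\,dx=-1$ for all $n$ while $F(0)=0$, contradicting $F(u)\leq\liminf_n F(u_n)$. So a hypothesis such as $f\geq 0$, or $f$ bounded below, is genuinely missing from the statement (and is surely present in the cited reference); once it is added, your Mazur--Fatou argument is a complete proof and the reduction step is unnecessary. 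Your proposed rescue via ``$\liminf_n F(u_n)<\infty$ plus superlinear growth of $f$'' does not close the gap in a proof of the theorem as stated, since neither is among its hypotheses; the subgradient slope $\xi_0$ is an artifact of your proof, and you are not entitled to import extra structural assumptions on $f$ or on $\{u_n\}$ in order to control the mass $\xi_0\int_\Omega u_n\,dx$ that escapes through $\partial\Omega$ under merely local weak convergence.
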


For a proof, see Theorem 5.14 in \cite{bigbook}.

\subsection{Gamma Limits}

The Gamma limit is a notion of function convergence that is of interest because it preserves minima.

\begin{definition}
For a metric space $(X,d)$ and a sequence $f_n:X \to \overline{\R}$, we say that $f_n$ \textbf{Gamma converges} to a function $f:X \to \overline{\R}$ if for all $x_0 \in X$:
\begin{enumerate}
    \item $\liminf_{n \to \infty} f_n(x_n) \geq f(x_0)$ for \textbf{all} sequences $x_n$ with $x_n \stackrel d\to x_0$, and
    \item $\limsup_{n \to \infty} f_n(x_n) \leq f(x_0)$ for \textbf{some} sequence $x_n$ with $x_n \stackrel d\to x_0$,
\end{enumerate}
and we write $f = \Glim_{n \to \infty} f_n$.
\end{definition}

The two inequality conditions are referred to as the \textit{liminf inequality} and \textit{limsup inequality} respectively.

The following properties are proven in Chapter 1 of \cite{braides}.
\begin{itemize}
    \item The Gamma limit, if it exists, is unique pointwise.  Moreover, it is given precisely by
    $$\left(\Glim_{n \to \infty} f_n\right)(x) = \inf\left\{\liminf_{n \to \infty} f_n(x_n) : x_n \stackrel d\to x\right\}.$$
    \item It is not necessarily true that the Gamma limit coincides with the pointwise limit, if both exist.
    \item If $f = \Glim f_n$ exists and $x_n$ is a minimizer for $f_n$, then every accumulation point of the sequence $\{x_n\}_n$ is a minimizer for $f$.
\end{itemize}

As an example, let us define $\Omega := (0,1)$ and consider the functional $F_\epsilon:W^{1,2}(\Omega) \to \R$ defined by
$$F_\epsilon(u) := \begin{cases}\int_0^1 (u^2-1)^2+\epsilon^2|u'|^2 \,dx, & \int_\Omega u\,dx = 0 \\ +\infty, & \text{otherwise}\end{cases}$$
for $\epsilon > 0$.  Although it is quite non-trivial, it can be shown that $F_\epsilon$ has a minimizer.

The $\epsilon^2|u'|^2$ term is a complication.  Can it be removed without much change to the minimizers?  As before, let $\epsilon_n \to 0^+$ be a sequence.  We can show that
$$\Glim_{n \to \infty} F_{\epsilon_n}(u) = \int_0^1 (u^2-1)^2\,dx$$
under $W^{1,2}(\Omega)$ convergence.  Now let us attempt to repeat the previous logic.  If we take $u_n$ to be a minimizer of $F_{\epsilon_n}$, then any subsequence of $u_n$ converging in $W^{1,2}(\Omega)$ must converge to a minimizer of $\int_0^1 (u^2-1)^2\,dx$.  However, we can show that 
$$\inf\left\{\int_0^1 (u^2-1)^2\,dx : \int_\Omega u\,dx = 0 \text{ and } u \in W^{2,1}(\Omega)\right\} = 0,$$ hence a minimizer $u_{\text{min}}$ would have to take the form $u_{\text{min}} = 1 \cdot 1_E + (-1) \cdot 1_{\Omega \setminus E}$ for some $E$ with $\leb^1(E) = \frac12$, which cannot be consistent with the requirement that $u \in W^{2,1}(\Omega)$.  Hence, there is no such minimizer!

Our failure to deduce anything meaningful from this Gamma convergence result is a consequence of missing a compactness result that guarantees the existence of converging subsequences of $u_n$.  Fortunately, without computing the exact minimizers, we can still recover a reasonable Gamma limit result by weakening the convergence.  Specifically, we may sacrifice the strong $W^{1,2}(\Omega)$ convergence for a weaker convergence called \textit{weak} $L^4(\Omega)$ \textit{convergence} (and written as ``$u_n \wc u$ in $L^4(\Omega)$").  The advantage of this weakening is that if we take $u_n$ to be a minimizer of $F_{\epsilon_n}$, then we can use \textit{weak compactness} to prove that there is a subsequence $u_{n_k}$ and some $u \in L^4(\Omega)$ for which $u_{n_k} \wc u$ in $L^4(\Omega)$, which is precisely the sort of compactness result we seek.  Under weak $L^4(\Omega)$ convergence, the Gamma limit changes to 
$$\Glim_{n \to \infty} F_{\epsilon_n}(u) = \int_0^1 f^{**}(u)\,dx,$$
where $f^{**}(z) := \begin{cases}(z^2-1)^2, & |z| > 1 \\ 0, & |z| \leq 1\end{cases}$ denotes the \textit{convex envelope} of $f(z) := (z^2-1)^2$.  Due to the existence of a compactness result, we are guaranteed the existence of a subsequence $u_{n_k}$ converging weakly in $L^2(\Omega)$ to a minimizer of $\int_\Omega f^{**}(u)\,dx$, and indeed there are many minimizers of this functional.


This demonstrates the importance of choosing the correct metric of convergence for the Gamma limit.  If the convergence is too strong, there may be no compactness result.  On the other hand, the weaker the convergence, the weaker the result we end up proving.  


\section{The Second-Order Singularly-Perturbed Problem}

Our first goal is to prove Theorem \ref{compactness2}.  As a steppingstone, we first prove a similar compactness result in the context of a first order singularly-perturbed problem, which is of interest in itself.

\begin{theorem}[Compactness for First Order Problem]\label{compactness1}
Let $\Omega = (a,b)$ be a non-empty open interval, and let $1 \leq p < \infty$.  Let $\epsilon_n$ be a sequence of positive reals with $\epsilon_n \to 0^+$.  
\begin{enumerate}[label=(\roman*)]
    \item If we have a sequence $u_n \in W^{1,p}(\Omega)$ with 
    $$C := \sup_{n \in \N} \int_\Omega \epsilon_n^{-1}(u_n^2-1)^2 + \epsilon_n|u_n'|^2\,dx < \infty,$$ then there exists a subsequence $u_{n_k}$ for which $u_{n_k} \to u$ in $L^p(\Omega)$ for some $u \in BPV(\Omega;\{-1,1\})$.
    \item Moreover, such a $u$ must satisfy $\essVar_\Omega u \leq \frac34C.$
\end{enumerate}
\end{theorem}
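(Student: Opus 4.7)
The plan is to package the entire hypothesis into a uniform pointwise-variation bound on a single auxiliary function. Define
$$\psi(t) := \int_{-1}^t |s^2-1|\,ds,$$
so that $\psi$ is a continuous, strictly increasing bijection of $\R$ onto itself with $\psi(-1)=0$ and $\psi(1)=4/3$. The AM-GM inequality gives
$$\epsilon_n^{-1}(u_n^2-1)^2 + \epsilon_n|u_n'|^2 \geq 2|u_n^2-1||u_n'|,$$
and Theorem \ref{chain}, applied after truncating the locally Lipschitz $\psi$ outside the range of the (bounded) function $u_n$, identifies $|u_n^2-1||u_n'|$ as $|(\psi\circ u_n)'|$. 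Integrating and setting $v_n := \psi\circ u_n$ yields
$$\Var_\Omega \tilde v_n = \int_\Omega |(\psi\circ u_n)'|\,dx \leq \tfrac{C}{2},$$
where $\tilde v_n$ is the absolutely continuous representative of $v_n$.

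Next I would extract a pointwise limit of the $\tilde v_n$. Expanding $\int (u_n^2-1)^2\,dx \leq C\epsilon_n$ and Cauchy--Schwarz give a uniform $L^4$ bound $\sup_n \|u_n\|_{L^4(\Omega)} < \infty$; since $|\psi(t)| \leq c(1+|t|^3)$, this controls $\|v_n\|_{L^1(\Omega)}$, and combined with the variation bound it controls $\tilde v_n$ uniformly in $L^\infty(\Omega)$ (via $|\tilde v_n(x)-\tilde v_n(y)| \leq \Var_\Omega \tilde v_n$ after averaging in $y$). Helly's selection theorem then produces a subsequence $\tilde v_{n_k} \to \tilde v$ pointwise with $\tilde v \in BPV(\Omega)$, and Theorem \ref{hellybound} gives $\Var_\Omega \tilde v \leq C/2$.

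To identify $\tilde v$ and recover $u$: since $\int (u_n^2-1)^2 \to 0$ we have $u_n^2 \to 1$ in $L^2(\Omega)$, so passing to a further subsequence gives $u_{n_k} \to \pm 1$ almost everywhere, forcing $\tilde v \in \{0,4/3\}$ almost everywhere. Setting $u := \psi^{-1}(\tilde v) \in \{-1,1\}$ and using continuity of $\psi^{-1}$, we get $u_{n_k} \to u$ almost everywhere. Each jump of $\tilde v$ of size $4/3$ corresponds to a jump of $u$ of size $2$, hence $\Var_\Omega u = \tfrac{3}{2}\Var_\Omega \tilde v \leq \tfrac{3}{4}C$, which bounds $\essVar_\Omega u$ and settles part (ii). Finally, for $1 \leq p < 4$, Vitali's convergence theorem upgrades a.e.\ convergence to $L^p$ convergence (the uniform $L^4$ bound gives equi-integrability of $\{|u_n|^p\}$); for $p=4$, the identity $\int u_n^4\,dx = \int(u_n^2-1)^2\,dx + 2\int u_n^2\,dx - \leb^1(\Omega)$ combined with $u_n^2 \to 1$ in $L^2$ gives $\|u_{n_k}\|_{L^4} \to \|u\|_{L^4}$, which together with a.e.\ convergence yields strong $L^4$ convergence.

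The key design choice, and in my view the main obstacle, is finding the correct $\psi$: it must be calibrated so that AM-GM assembles the two halves of $F_{\epsilon_n}(u_n)$ into the pointwise variation of $\psi\circ u_n$, and so that the zero set of $\psi'$ isolates the two wells $\{-1, 1\}$. With this choice the constant $3/4$ emerges exactly as $\tfrac{1}{2}\cdot\tfrac{3}{2}$, where $\tfrac{1}{2}$ is the AM-GM factor and $\tfrac{3}{2} = 2/(4/3)$ converts jumps of $v_n$ back into jumps of $u_n$.
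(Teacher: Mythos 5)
Your proposal is essentially the paper's argument: both rely on AM-GM to convert the energy bound into a uniform pointwise-variation bound on a calibrated composition of $u_n$, then invoke Helly's selection theorem, identify the limit on the wells, and undo the calibration to recover $u$ with the $\tfrac34 C$ bound. Your choice of calibration $\psi(t) = \int_{-1}^t |s^2-1|\,ds$ is a genuine small improvement over the paper's $U_n = 2u_n - \tfrac23 u_n^3$: because $\psi$ is strictly increasing and hence invertible with continuous inverse, the almost-everywhere convergence $u_{n_k} \to u$ falls out immediately by composing with $\psi^{-1}$, whereas the paper's non-injective cubic forces an explicit factoring step $2z - \tfrac23 z^3 - \tfrac43 = -\tfrac23(z-1)^2(z+2)$ to extract $u_{n_k}(x_0) \to \pm 1$. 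Your Helly anchor (a uniform $L^\infty$ bound on $\tilde v_n$ via the $L^1$ bound plus variation control) is also fine and a little more robust than the paper's single-point argument.

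There is, however, one gap at the very end. The theorem asserts $L^p(\Omega)$ convergence for \emph{all} $1 \leq p < \infty$, but your closing paragraph treats only $1 \leq p < 4$ (Vitali via equi-integrability from the uniform $L^4$ bound) and $p = 4$ (norm convergence plus a.e.\ convergence); for $p > 4$ the uniform $L^4$ bound does not by itself furnish equi-integrability of $\{|u_n|^p\}$, so neither branch applies. The fix is already implicit in your own argument: you proved $\sup_n \|\tilde v_n\|_{L^\infty(\Omega)} < \infty$, and since $\psi:\R\to\R$ is a strictly increasing bijection with $|\psi(t)| \to \infty$ as $|t| \to \infty$, preimages of bounded sets under $\psi$ are bounded, so $\{u_n\}$ is uniformly bounded in $L^\infty(\Omega)$. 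Dominated convergence then upgrades the a.e.\ convergence to $L^p(\Omega)$ convergence for every finite $p$ at once, which is in fact how the paper concludes (it deduces $|u_{n_k}| \leq C''$ from the compactness of the preimage of $[-C,C]$ under its cubic).
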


\begin{proof}
Since $\int_\Omega (u_n^2-1)^2\,dx \leq C\epsilon_n \to 0$, we see that $u_n^2-1 \to 0$ in $L^2(\Omega)$.  So, by extraction of a subsequence, we may assume that $u_n^2-1 \to 0$ almost everywhere.  In particular, we get that $|u_n| \to 1$ almost everywhere.

Applying the AM-GM inequality inside the integral gives
$$C \geq \int_\Omega \epsilon_n^{-1}(u_n^2-1)^2+\epsilon_n|u_n'|^2\,dx \geq \int_\Omega 2|u_n^2-1| \cdot |u_n'|\,dx = \int_\Omega |U_n'|\,dx$$
for all $n$, where $U_n := 2u_n-\frac23u_n^3$.  Taking $\tilde{U_n}$ to be the absolutely continuous representative of $U_n$, we deduce that $\Var_\Omega \tilde{U}_n \leq C$ for all $n$. Moreover, since $|u_n| \to 1$ almost everywhere, we have that $|U_n| \to 2-\frac23 = \frac43$ almost everywhere by continuity of $z \mapsto 2z-\frac23z^3$, and so in particular there is some $x \in \Omega$ for which $\tilde{U}_n(x)$ converges.  From this, we may now apply Helly's Selection Theorem to find a subsequence $\tilde{U}_{n_k}$ that converges pointwise to some $\tilde{U} \in BPV(\Omega)$, and $C \geq \liminf_{n \to \infty} \Var_\Omega \tilde{U}_n \geq \Var \tilde{U}$ by Theorem \ref{hellybound}.

It follows that $U_{n_k}$ converges to some $U$ almost everywhere that has representative $\tilde{U}$, with $U = \pm \frac43$ almost everywhere, and moreover $\essVar_\Omega U \leq \Var \tilde{U} \leq C < \infty$.  Hence, by taking $u = \frac34U$, we have that 
\begin{equation}
    \essVar_\Omega u \leq \frac34 C < \infty, \label{eq:essVarbound}
\end{equation} 
and particularly $u$ has a representative $\tilde{u} \in BPV(\Omega;\pm 1)$.

To see that $u_{n_k} \to u$ almost everywhere, recall that $|u_{n_k}| \to 1$ almost everywhere.  Taking representatives $\tilde{u}_{n_k}$, we have that almost every $x_0 \in \Omega$ satisfies $|\tilde{u}_{n_k}(x_0)| \to 1$, $2\tilde{u}_{n_k}-\frac23\tilde{u}_{n_k}^3 = \tilde{U}_{n_k}$ for all $k$, $\tilde{U}_{n_k}(x_0) \to \tilde{U}(x_0)$, and $\tilde{U}(x_0) = \frac43\tilde{u}(x_0)$.

For every such $x_0$ we have that $2\tilde{u}_{n_k}(x_0)-\frac23u_{n_k}(x_0)^3 \to \frac43\tilde{u}(x_0)$.  If $\tilde{u}(x_0) = 1$, then $2\tilde{u}_{n_k}(x_0)-\frac23\tilde{u}_{n_k}(x_0)^3 - \frac43 \to 0$, and by factoring we obtain $(\tilde{u}_{n_k}(x_0)-1)^2(\tilde{u}_{n_k}(x_0)+2) \to 0$.  But $|\tilde{u}_{n_k}(x_0)| \to 1$, so $\liminf_{k \to \infty} |\tilde{u}_{n_k}(x_0)+2| \geq 1$, and hence it must follow that $(\tilde{u}_{n_k}(x_0)-1)^2 \to 0$, i.e. $\tilde{u}_{n_k}(x_0) \to 1 = \tilde{u}(x_0)$.  By a symmetrical argument we see that if $\tilde{u}(x_0) = -1$ then $\tilde{u}_{n_k}(x_0) \to -1 = \tilde{u}(x_0)$.  This shows that $u_{n_k} \to u$ almost everywhere.

Lastly, to obtain $L^p(\Omega)$ convergence, we may  use the fact that $\essVar_\Omega U_{n_k} \leq C$ and that $U_{n_k}$ converges almost everywhere to deduce that $U_{n_k}$ is uniformly bounded by some constant $C'$.  Then the inverse image of $[-C,C]$ under $z \mapsto 2z-\frac23z^3$ is compact by continuity, thus $|U_{n_k}| = |2u_{n_k}-\frac23u_{n_k}^3| \leq C'$ implies that $|u_{n_k}| \leq C''$ for some constant $C''$.  In particular, we see that $u_{n_k}$ is uniformly bounded, so we may apply dominated convergence to obtain the desired $L^p(\Omega)$ convergence.  This proves item (i), and the bound claimed in item (ii) was acquired in \eqref{eq:essVarbound}.
\end{proof}

Next, we will prove an essential interpolation inequality.

\begin{theorem}\label{inter3}
Let $\Omega = (a,b)$ be a non-empty open interval, let $\Omega_0 = (a_0,b_0) \Subset (a,b)$, and let $1 \leq p \leq 4$.  Let $\epsilon_n$ be a sequence of positive reals with $\epsilon_n \to 0^+$.  If we have a sequence $u_n \in W^{2,p}(\Omega)$ with 
$$\sup_{n \in \N} \int_\Omega \epsilon_n^{-1}(u_n^2-1)^2 + \epsilon_n^3|u_n''|^2\,dx < \infty,$$ 
then for a universal constant $C' > 0$ we have that
$$\int_{\Omega_0} \epsilon_n|u_n'|^2\,dx \leq C'\int_\Omega \epsilon_n^{-1}(u_n^2-1)^2 + \epsilon_n^3|u_n''|^2\,dx$$
for all large enough $n$.
\end{theorem}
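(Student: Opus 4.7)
The approach is to partition $\Omega$ according to the zero set of $u_n$ and apply the appropriate interpolation inequality on each piece. The central difficulty is that the generic interpolation bounds in Lemma \ref{inter1} and Theorem \ref{inter2} involve $\int u^2$ rather than the much smaller $\int (u^2-1)^2$; by restricting to subintervals on which $u_n$ has constant sign and shifting $u_n$ by $\pm 1$, I will replace the former by the latter. Since $u_n \in W^{2,p}(\Omega)$ with $p < \infty$ yields $u_n \in C^1(\Omega)$ by Theorem \ref{ACLcondition}, the zero set $Z_n := u_n^{-1}(\{0\})$ is closed and $\Omega \setminus Z_n$ decomposes as a disjoint union of open intervals $\{J_k\}_k$ on each of which $u_n$ has a constant sign $c_k \in \{\pm 1\}$. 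The identity $(u_n-c_k)(u_n+c_k) = u_n^2-1$ together with $|u_n+c_k| = |u_n|+1 \geq 1$ on $J_k$ yields the key pointwise inequality $(u_n-c_k)^2 \leq (u_n^2-1)^2$ there.

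I classify each $J_k$ as \emph{interior} if both endpoints of $\overline{J_k}$ lie in $Z_n$, and \emph{boundary} otherwise. For an interior $J_k$, Rolle's theorem applied to $u_n$ on $\overline{J_k}$ produces a zero of $u_n' = (u_n-c_k)'$ in $J_k$, so Lemma \ref{inter1} applied to $u_n - c_k$ on $J_k$, together with the pointwise bound and the AM-GM inequality $\sqrt{xy} \leq (x+y)/2$, yields
\[
\epsilon_n \int_{J_k}|u_n'|^2\,dx \leq C_1\left[\epsilon_n^{-1}\int_{J_k}(u_n^2-1)^2\,dx + \epsilon_n^3\int_{J_k}|u_n''|^2\,dx\right]
\]
for a universal $C_1 > 0$. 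For a boundary $J_k$ that meets $\Omega_0$, the separation $\Omega_0 \Subset \Omega$ forces $|J_k| \geq \delta := \min(a_0 - a, b - b_0) > 0$, so once $\epsilon_n < \delta$ I may apply Theorem \ref{inter2} to $u_n - c_k$ on $J_k$ with $l = \epsilon_n$; squaring the resulting estimate (via QM-AM as noted in the paper) and invoking the pointwise bound produces an inequality of the same shape with a possibly larger universal constant $C_2 > 0$.

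Finally, since $u_n$ is $C^1$ and vanishes on $Z_n$, we have $u_n' = 0$ almost everywhere on $Z_n$, so
\[
\int_{\Omega_0}|u_n'|^2\,dx = \sum_{k : J_k \cap \Omega_0 \ne \emptyset}\int_{J_k \cap \Omega_0}|u_n'|^2\,dx;
\]
summing the per-piece estimates (there are at most two boundary pieces and any number of interior ones) gives the theorem with $C' := \max(C_1, C_2)$. The main obstacle is the treatment of the boundary pieces: on these $u_n'$ need not have a zero and Lemma \ref{inter1} is unavailable, and this is precisely where the geometric separation $\Omega_0 \Subset \Omega$ enters, guaranteeing that any boundary piece meeting $\Omega_0$ is long enough for $l = \epsilon_n$ to be an admissible choice in Theorem \ref{inter2}.
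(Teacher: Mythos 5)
Your proof is correct, and it takes the same overall strategy as the paper: shift $u_n$ to exploit $(u_n^2-1)^2$ in place of $u_n^2$, apply Lemma \ref{inter1} on pieces where Rolle produces a zero of $u_n'$, and apply Theorem \ref{inter2} with $l = \epsilon_n$ on the (at most two) boundary pieces, where the compact containment $\Omega_0 \Subset \Omega$ guarantees enough length. The one substantive difference: the paper does not decompose along the zero set $Z_n$ directly, but instead introduces an auxiliary threshold $\alpha_n \in (0,\tfrac12)$, partitions $\Omega$ into $\{u_n = 0\}$, $\{0 < |u_n| \leq \alpha_n\}$, and $\{|u_n| > \alpha_n\}$, and chooses $\alpha_n$ small enough that the middle region's contribution $\int_{\{0<|u_n|\leq\alpha_n\}}|u_n'|^2$ is absorbed into the right-hand side. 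Your version eliminates the middle region outright: partitioning by $Z_n$, the endpoints of interior components already satisfy $u_n = 0$ so Rolle applies, and on $Z_n$ itself $u_n' = 0$ almost everywhere (this is precisely the content of the paper's Lemma \ref{count}, which you invoke implicitly via the observation that the exceptional set $\{u_n = 0,\, u_n' \neq 0\}$ consists of isolated points of $Z_n$ and is therefore countable). The change is cosmetic in that it uses the same lemmas and the same interior/boundary dichotomy, but it does spare the reader the $\alpha_n$ bookkeeping and drops the additive $+1$ from the final constant. One small point worth making explicit if you write this up: the constant in Lemma \ref{inter1} is scale-invariant (a dilation $x \mapsto Lx$ leaves the inequality unchanged), which is what licenses applying it on interior components $J_k$ of arbitrarily small length with a single universal constant; you use this tacitly.
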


We begin by proving two lemmas.

\begin{lemma}\label{unif}
Let $\Omega$ be an open interval, $1 \leq p \leq 4$, and $u_n \in L^p(\Omega)$ such that $\lim_{n \to \infty} \int_a^b (u_n^2-1)^2\,dx = 0$.  Then $\{|u_n|^p\}_n$ is uniformly integrable.
\end{lemma}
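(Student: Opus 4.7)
The plan is to verify the two defining properties of uniform integrability: a uniform $L^1(\Omega)$ bound on $\{|u_n|^p\}_n$ and a uniform absolute-continuity property with respect to Lebesgue measure. As a first step I would write the algebraic identity
\[
u_n^4 = (u_n^2-1)^2 + 2u_n^2 - 1.
\]
Since $\Omega$ has finite Lebesgue measure, the Cauchy--Schwarz inequality converts the hypothesis $\int_\Omega(u_n^2-1)^2\,dx \to 0$ into $u_n^2 \to 1$ strongly in $L^1(\Omega)$. Integrating the identity then gives $\int_\Omega u_n^4\,dx \to |\Omega|$, so $\{u_n^4\}_n$ is uniformly bounded in $L^1(\Omega)$, and by H\"older the same holds for $\{|u_n|^p\}_n$ for every $1 \leq p \leq 4$.

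For the sub-critical range $1 \leq p < 4$ the absolute-continuity property is immediate: given any measurable $E \subseteq \Omega$, H\"older with exponents $4/p$ and $4/(4-p)$ yields
\[
\int_E |u_n|^p\,dx \leq |E|^{1-p/4}\left(\int_\Omega u_n^4\,dx\right)^{p/4},
\]
and the positive exponent $1-p/4$ forces this bound to tend to zero as $|E| \to 0$, uniformly in $n$.

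The main obstacle is the borderline case $p=4$, where H\"older alone no longer produces any decay in $|E|$. Here I would integrate the identity on $E$ in place of $\Omega$:
\[
\int_E u_n^4\,dx = \int_E (u_n^2-1)^2\,dx + 2\int_E u_n^2\,dx - |E|.
\]
The first summand is dominated by the global quantity $\int_\Omega(u_n^2-1)^2\,dx$, which tends to zero independently of $E$. For the middle summand I would exploit the $L^1(\Omega)$ convergence $u_n^2 \to 1$: given $\eta > 0$, choose $N$ so that $\int_\Omega |u_n^2-1|\,dx < \eta$ for $n \geq N$, whence $\int_E u_n^2\,dx \leq |E| + \eta$ for such $n$; for the finitely many remaining indices, absolute continuity of the integral of each individual $u_n^2 \in L^1(\Omega)$ furnishes a common $\delta > 0$ with $\int_E u_n^2\,dx < \eta$ whenever $|E| < \delta$. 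Combining the three estimates shows $\sup_n \int_E u_n^4\,dx \to 0$ as $|E| \to 0$, which completes the proof. The crux of the argument is this last step: upgrading $L^1$-convergence to an equiabsolute-continuity statement through the simple split between finitely many indices and a uniform tail.
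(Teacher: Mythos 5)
Your argument is correct in outline but takes a genuinely different route from the paper, and it has one small imprecision in the $p=4$ case. The paper verifies uniform integrability through the ``vanishing tails'' characterization: choose $c>0$ and $M_0$ so that $cz^p \leq (z^2-1)^2$ for $|z|\geq M_0$ (possible for any $1 \leq p \leq 4$ by comparing growth rates), so that for any $M \geq M_0$,
$$\int_{\{|u_n|\geq M\}}|u_n|^p\,dx \leq \tfrac{1}{c}\int_\Omega (u_n^2-1)^2\,dx,$$
which is $\leq \eta$ once $n$ is large; the finitely many remaining indices are handled by integrability of each individual $|u_n|^p$. That single inequality treats all of $1\leq p\leq 4$ at once and makes the proof a few lines. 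You instead verify the equivalent ``$L^1$-boundedness plus equi-absolute continuity'' characterization, and the identity $u_n^4 = (u_n^2-1)^2 + 2u_n^2 - 1$ is a nice way to produce the uniform $L^4$ bound; but it forces you to split into $p<4$ (H\"older with a positive exponent on $|E|$) and $p=4$ (where H\"older degenerates and you must return to the identity on $E$), so your route is longer than the paper's.

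The imprecision: in the borderline case you write that the first summand $\int_E(u_n^2-1)^2\,dx$ is dominated by the global quantity $\int_\Omega(u_n^2-1)^2\,dx$, ``which tends to zero independently of $E$.'' That quantity tends to zero as $n\to\infty$, not uniformly in $n$; for the finitely many indices $n<N$ it is just a fixed positive number, and dominating by it gives no smallness as $|E|\to 0$. You need the same finitely-many-plus-tail split that you already apply to the middle summand: for $n\geq N$ bound $\int_E(u_n^2-1)^2\leq\int_\Omega(u_n^2-1)^2<\eta$, and for $n<N$ invoke absolute continuity of each individual $(u_n^2-1)^2\in L^1(\Omega)$ (equivalently of $u_n^4$). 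Once that sentence is repaired the proof is complete, but as written the claimed ``combining the three estimates'' does not quite close.
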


\begin{proof}
First, find some $c > 0$ so small that $cz^p \leq (z^2-1)^2$ for all $z$ large enough; say, $z \geq M_0$.  This is possible by virtue of order of growth, since $1 \leq p \leq 4$.  Now 
$$\int_\Omega |u_n|^p\,dx = \int_{\{|u_n| < M_0\}} |u_n|^p\,dx + \int_{\{|u_n| \geq M_0\}} |u_n|^p\,dx \leq M_0^p + \int_{\{|u_n| \geq M_0\}} \frac1c(u_n^2-1)^2\,dx < \infty,$$
so $|u_n|^p$ is integrable for all $n$.

Fix $\eta > 0$.  Find $N_\eta$ such that $\int_\Omega (u_n^2-1)^2\,dx \leq c\eta$ for all $n > N_\eta$.  Lastly, by integrability of $|u_n|^p$ for $n=1,2,\cdots,N_\eta$, we choose $M_\eta \geq M_0$ so large that $\int_{|u_n| \geq M_\eta} |u_n|^p\,dx < \eta$ for all such $n$.  This $M_\eta$ witnesses the uniform integrability since for all $n \geq N_\eta$ we have
$$\int_{|u_n| \geq M_\eta} |u_n|^p \,dx \leq \frac1c\int_{|u_n| \geq M_\eta} (u_n^2-1)^2 \,dx \leq \eta.$$
\end{proof}

\begin{lemma}\label{count}
Let $\Omega$ be an open interval.  Then for every $u \in W^{2,p}(\Omega)$, the set $Z = \{x \in \Omega : u(x) = 0, u'(x) \neq 0\}$ is countable.
\end{lemma}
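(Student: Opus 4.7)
The plan is to exploit the $C^1$ regularity of $u$ provided by the ACL condition (Theorem \ref{ACLcondition}) to show that every point of $Z$ is isolated in $Z$, from which countability follows since $\mathbb{R}$ is second countable.

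First I would invoke Theorem \ref{ACLcondition} to replace $u$ by its $C^1$ representative; in particular, both $u$ and $u'$ are continuous on $\Omega$. Now fix $x_0 \in Z$, so that $u(x_0) = 0$ and $u'(x_0) \neq 0$. Without loss of generality assume $u'(x_0) > 0$ (otherwise apply the same argument to $-u$). By continuity of $u'$, there exists $\delta > 0$ such that $u'(x) > \tfrac{1}{2}u'(x_0) > 0$ for every $x \in (x_0-\delta, x_0+\delta) \subseteq \Omega$.

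Consequently, $u$ is strictly increasing on $(x_0-\delta, x_0+\delta)$, so $x_0$ is the unique zero of $u$ in that interval. In particular, $(x_0-\delta,x_0+\delta) \cap Z = \{x_0\}$, which shows that $x_0$ is an isolated point of $Z$.

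Thus $Z$ consists entirely of isolated points of $\mathbb{R}$. To conclude, I would assign to each $x_0 \in Z$ a rational open interval $(q_{x_0}, r_{x_0}) \subseteq (x_0 - \delta_{x_0}, x_0 + \delta_{x_0})$ containing $x_0$ with $q_{x_0}, r_{x_0} \in \mathbb{Q}$; the map $x_0 \mapsto (q_{x_0}, r_{x_0})$ is then injective into $\mathbb{Q}^2$, proving $Z$ is countable. There is no substantive obstacle here: the only point worth noting is that $W^{2,p}$ regularity is used precisely to obtain the continuity of $u'$, which is what makes each point of $Z$ locally isolated.
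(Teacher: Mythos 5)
Your proof is correct and follows essentially the same approach as the paper: use $C^1$ regularity from Theorem \ref{ACLcondition} to find a neighborhood of each $x_0 \in Z$ where $u'$ has constant sign, hence $u$ is strictly monotone, so $x_0$ is isolated in $Z$. The paper phrases the last step via Rolle's theorem rather than monotonicity, but this is a cosmetic difference; both arguments are the same in substance.
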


\begin{proof}
Let $x_0 \in Z$.  It suffices to show that $x_0$ is an isolated point of $Z$.  We may assume without loss of generality that $u'(x_0) > 0$.

Since $u \in W^{2,p}(\Omega)$ we know that $u'$ is continuous (See Theorem \ref{ACLcondition}).  Thus, there exists $\delta > 0$ such that $u'(x) > 0$ for all $x \in (x_0-\delta,x_0+\delta)$.  We conclude that there is no $x \in (x_0-\delta,x_0+\delta)$ for which $u(x) = 0$ (so that in particular, $x \not\in Z$), otherwise we obtain a contradiction from Rolle's Theorem.
\end{proof}

Although it is not worth stating as a lemma, we will be using the inequality $(|z|-1)^2 \leq (z^2-1)^2$ quite liberally, which follows from factoring $(z^2-1)^2$ and taking cases on the sign of $z$.

We may now prove Theorem \ref{compactness2}.  
\begin{proof}
Let 
\begin{equation}
C = \sup_{n \in \N} \int_{a}^{b} \epsilon_n^{-1}(u_n^2-1)^2 + \epsilon_n^3|u_n''|^2\,dx. \label{eq:defineC}    
\end{equation}
Since $(a_0,b_0) \Subset (a,b)$ and $\epsilon_n \to 0$, it is sufficient to obtain an interpolation inequality of the form
\begin{equation}
    \int_{\Omega_{\epsilon_n}} \epsilon_n|u_n'|^2\,dx \leq C'\int_\Omega \epsilon_n^{-1}(u_n^2-1)^2 + \epsilon_n^3|u_n''|^2\,dx, \label{eq:desiredform}
\end{equation}
where $C'$ is a universal constant and $\Omega_\epsilon := (a+\epsilon,b-\epsilon)$ for all $\epsilon > 0$.

Note that if $\int_\Omega \epsilon_n^{-1}(u_n^2-1)^2\,dx = 0$, then we must have either $u_n \equiv 1$ or $u_n \equiv -1$.  Hence $\int_{\Omega_{\epsilon_n}} \epsilon_n|u_n'|^2\,dx = 0$, so the inequality \eqref{eq:desiredform} holds and there is nothing more to show.  Thus, we may assume the contrary, so that
\begin{equation}
    \int_\Omega \epsilon_n^{-1}(u_n^2-1)^2 + \epsilon_n^3|u_n''|^2\,dx > 0. \label{eq:dumbcase}
\end{equation}

It is clear from \eqref{eq:defineC} that
\begin{equation}
  \int_\Omega |u_n''|^2\,dx \leq \frac{C}{\epsilon_n^3} \label{eq:W22bound}
\end{equation}
and
$$\int_\Omega (u_n^2-1)^2\,dx \leq \epsilon_nC$$
for all $n$. 

We divide our analysis of $u_n$ into three sets:  Where $u_n = 0$, where $u_n$ is \textit{near} 0, and where $u_n$ is \textit{far} from 0.  

Firstly, the argument in Lemma \ref{unif} implies that $u_n \in L^2(\Omega)$, and \eqref{eq:W22bound} implies that $u_n'' \in L^2(\Omega)$.  It follows by Theorem \ref{inter2} that $u_n' \in L^2(\Omega)$.  Hence, by this fact and the assumption \eqref{eq:dumbcase}, there must exist $\alpha_n \in (0,\tfrac12)$ so small that 
\begin{equation}
    \int_{0 < |u_n| \leq \alpha_n} |u_n'|^2\,dx \leq \int_\Omega \epsilon_n^{-1}(u_n^2-1)^2 + \epsilon_n^3|u_n''|^2\,dx. \label{closebound}
\end{equation}
Our three sets in question are then $u_n^{-1}(\{0\})$, $u_n^{-1}\big([-\alpha_n,0) \cup (0,\alpha_n]\big)$, and $u_n^{-1}\big((-\infty,-\alpha_n) \cup (\alpha_n,+\infty)\big)$.

For the set $u_n^{-1}(\{0\})$, we claim that 
\begin{equation}
    \int_{u_n = 0} |u_n'|\,dx = 0. \label{eq:zerobound}
\end{equation}
To prove this, it suffices to show that
$$\int_{\{y : u_n(y) = 0, u_n'(y) \neq 0\}} |u_n'|\,dx = 0$$
since the integral over the set where $u_n' = 0$ is clearly 0.  But by Lemma \ref{count}, the domain over which we integrate here is countable, so the integral is indeed 0.
    
The set $u_n^{-1}\big([-\alpha_n,0) \cup (0,\alpha_n]\big)$ is already handled by \eqref{closebound}.
    
It remains to handle $u_n^{-1}((-\infty,-\alpha_n) \cup (\alpha_n,+\infty))$.  This set is open by continuity, so it is the at-most countable union of disjoint open intervals.  Let those intervals that do not have either $a$ or $b$ as an endpoint be enumerated as $\{(a_i,b_i)\}_{i \in I}$ for a countable, possibly empty index set $I$.  Let the union of these intervals be $E_n$, so $E_n$ is where $|u_n| > \alpha_n$ except possibly near the endpoints of $(0,1).$

Then for all $i \in I$ we have that $u_n(a_i) = u_n(b_i)$ by a continuity argument.  Thus by Rolle's Theorem there is $c_i \in (a_i,b_i)$ for which $u_n'(c_i) = 0$.  Moreover, $u_n \neq 0$ over $(a_i,b_i)$, thus $v_n := |u_n|-1$ is differentiable over $(a_i,b_i)$.  If we assume without loss of generality that $u_n > 0$ over $(a_i,b_i)$, then in fact $v_n = u_n-1$, so it is clear that $v_n \in W^{2,p}(a_i,b_i)$, with $v_n' = u_n'$ and $v_n'' = u_n''$.  By Lemma \ref{inter1}, it follows that
\begin{align*}
  \int_{a_i}^{b_i} |u_n'|^2\,dx &\leq c_1\left(\int_{a_i}^{b_i} (|u_n|-1)^2\,dx\right)^{1/2}\left(\int_{a_i}^{b_i} |u_n''|^2\,dx\right)^{1/2}  \\
  &\leq c_1\left(\int_{a_i}^{b_i} (u_n^2-1)^2\,dx\right)^{1/2}\left(\int_{a_i}^{b_i} |u_n''|^2\,dx\right)^{1/2}
\end{align*}
for a constant $c_1 > 0$.  Multiplying by $\epsilon_n$ gives
$$\int_{a_i}^{b_i} \epsilon_n|u_n'|^2\,dx \leq c_1\left(\int_{a_i}^{b_i} \epsilon_n^{-1}(u_n^2-1)^2\,dx\right)^{1/2}\left(\int_{a_i}^{b_i} \epsilon_n^3|u_n''|^2\,dx\right)^{1/2},$$
then applying the AM-GM inequality gives
$$\int_{a_i}^{b_i} \epsilon_n|u_n'|^2\,dx \leq \frac{c_1}{2}\int_{a_i}^{b_i} \epsilon_n^{-1}(u_n^2-1)^2 + \epsilon_n^3|u_n''|^2\,dx.$$
Summing this inequality over all $i \in I$, we conclude that
\begin{equation}
    \int_{E_n} \epsilon_n|u_n'|^2\,dx \leq \frac{c_1}{2}\int_{E_n} \epsilon_n^{-1}(u_n^2-1)^2 + \epsilon_n^3|u_n''|^2\,dx. \label{mostbound}
\end{equation}
We are almost done.  Let $S_n = \sup\{x \in \Omega : |u_n(y)| \geq \alpha_n\,\forall a < y \leq x\}$ and $T_n = \inf\{x \in \Omega : |u_n(y)| \geq \alpha_n\,\forall x \leq y < b\}$.  If $\{x \in \Omega : |u_n(y)| \geq \alpha_n\,\forall a < y \leq x\}$ is empty then we take $S_n = a$, and similarly for $T_n$.  The intervals $(a,S_n)$ and $(T_n, b)$, if distinct and non-empty, are the two intervals that we omitted from $\{x \in \Omega : u_n(x) > \alpha_n\}$ to obtain $E_n$, so that $E_n \cup (a,S_n) \cup (T_n,b) = \{x \in \Omega : u_n(x) > \alpha_n\}$.  It remains to handle these two intervals.


If $S_n - a \leq \epsilon_n$, then clearly $\int_{(a,S_n) \cap \Omega_{\epsilon_n}} \epsilon_n|u_n'|^2\,dx = 0$.  If otherwise $S_n - a > \epsilon_n$ then we may apply Theorem \ref{inter2} to $|u_n|-1 \in W^{2,p}(a,S_n)$ to get
\begin{align*}
    \int_a^{S_n} |u_n'|^2\,dx &\leq c_2\epsilon_n^{-2}\int_a^{S_n} (|u_n|-1)^2\,dx + c_1\epsilon_n^2\int_a^{S_n} |u_n''|^2\,dx \\
    &\leq c_2\epsilon_n^{-2}\int_a^{S_n} (u_n^2-1)^2\,dx + c_1\epsilon_n^2\int_a^{S_n} |u_n''|^2\,dx,
\end{align*}
for a constant $c_2 > 0$.  Multiplying by $\epsilon_n$ gives
\begin{equation}
\int_{(a,S_n) \cap \Omega_{\epsilon_n}} \epsilon_n|u_n'|^2\,dx \leq \int_a^{S_n} \epsilon_n|u_n'|^2\,dx \leq c_2\int_a^{S_n} \epsilon_n^{-1}(u_n^2-1)^2 + \epsilon_n^3|u_n''|^2\,dx. \label{eq:aendbound}
\end{equation}
Thus the inequality \eqref{eq:aendbound} holds in either the cases $S_n - a \leq \epsilon_n$ and $S_n - a > \epsilon_n$.  Applying the same arguments above to the interval $(T_n,b)$, we see that in either of the cases $b - T_n \leq \epsilon_n$ and $b - T_n > \epsilon_n$ we may obtain the inequality
\begin{equation}
\int_{(T_n,b) \cap \Omega_{\epsilon_n}} \epsilon_n|u_n'|^2\,dx \leq c_2\int_{T_n}^{b} \epsilon_n^{-1}(u_n^2-1)^2 + \epsilon_n^3|u_n''|^2\,dx. \label{eq:bendbound}
\end{equation}
By summing \eqref{mostbound}, \eqref{eq:aendbound}, and \eqref{eq:bendbound}, we get the bound
\begin{align}
    \int_{\{|u_n| > \alpha_n\} \cap \Omega_{\epsilon_n}} \epsilon_n|u_n'|^2\,dx &\leq \int_{E_n \cup ((a,S_n) \cap \Omega_{\epsilon_n}) \cup ((T_n,b) \cap \Omega_{\epsilon_n})} \epsilon_n|u_n'|^2\,dx \nonumber \\ 
     &\leq \max(c_1/2,c_2)\int_{E_n \cup (a,S_n) \cup (T_n,b)} \epsilon_n^{-1}(u_n^2-1)^2 + \epsilon_n^3|u_n''|^2\,dx  \nonumber \\
     &= \max(c_1/2,c_2)\int_{|u_n| > \alpha_n} \epsilon_n^{-1}(u_n^2-1)^2 + \epsilon_n^3|u_n''|^2\,dx. \label{eq:endbound}
\end{align}

At last, once we add up \eqref{closebound}, \eqref{eq:zerobound}, and \eqref{eq:endbound}, we can arrive at the inequality
$$\int_{\Omega_{\epsilon_n}} \epsilon_n|u_n'|^2\,dx \leq c_3\int_{\Omega} \epsilon_n^{-1}(u_n^2-1)^2 + \epsilon_n^3|u_n''|^2\,dx,$$
where $c_3 = 1 + \max(c_1/2,c_2)$.  This is of the form \eqref{eq:desiredform}, so we are done.
\end{proof}

We now turn to the proof of Theorem \ref{compactness2}.

\begin{proof}
Let $C = \sup_{n \in \N} F_{\epsilon_n}(u_n)$.  We first claim that for each $\delta > 0$ there exists a subsequence $u_{n_k}$ of $u_n$ such that $u_{n_k} \to v$ in $L^p(a+\delta,b-\delta)$ for some function $v \in BPV((a+\delta,b-\delta); \pm 1)$, with $\essVar_{(a+\delta,b-\delta)} v \leq C_3$ for a constant $C_3$ not depending on $\delta$.

To see this, we apply Theorem \ref{inter3} to the interval $(a+\delta,b-\delta)$ to obtain the bound
$$\int_{a+\delta}^{b-\delta} \epsilon_n|u_n'|^2\,dx \leq \int_\Omega \epsilon_n^{-1}(u_n^2-1)^2 + \epsilon_n^3|u_n''|^2\,dx \leq C$$
for all large enough $n$.  It follows that
$$\int_{a+\delta}^{b-\delta} \epsilon_n^{-1}(u_n^2-1)^2 + \epsilon_n|u_n'|^2\,dx \leq 2C$$
for all such $n$, and so by item (i) of Theorem \ref{compactness1} applied to the interval $(a+\delta,b-\delta)$, we may find a subsequence $u_{n_k}$ with $u_{n_k} \to v$ in $L^p(a+\delta,b-\delta)$ for some $v \in BPV((a+\delta,b-\delta); \pm 1)$, and moreover by item (ii) of Theorem \ref{compactness1} we must have that $\essVar_{(a+\delta,b-\delta)} v \leq \frac32C$.  Hence the claim is proven with $C_3 = \frac32C$.

We finish with a diagonalization argument.  Let $\Omega_m = \left(a+\delta_m,b-\delta_m\right)$ for a sequence $\delta_m \to 0^+$.  For each $m$ we define a subsequence $u_{m,n}$ of $u_n$ recursively as follows:
\begin{itemize}
    \item $u_{1,n} = u_n$ for all $n$.
    \item For $m \geq 2$, we take $\{u_{m,n}\}_n$ to be a subsequence of $\{u_{m-1,n}\}_n$ that converges in $L^p(\Omega_m)$ to some $v_m:\Omega_m \to \{-1,1\}$ with $\essVar_{\Omega_m} v_m \leq C_3$, which exists by the claim.
\end{itemize}

Let $w_n = u_{n,n}$.  We claim that for some $u \in L^p(\Omega;\{-1,1\})$ we have that $w_n \to u$ in $L^p(\Omega_m)$ for every $m$, so that in particular we have $w_n \to u$ in $L^p_{\text{loc}}(\Omega)$. 

To see this, consider $m_1,m_2$ with $m_2 > m_1$, so that $\{u_{m_2,n}\}_n$ is a subsequence of $\{u_{m_1,n}\}_n$.  Then we may write $u_{m_2,k} = u_{m_1,n_k}$ for some sequence $n_k$, and so
$$\|v_{m_2}-v_{m_1}\|_{L^p(\Omega_{m_1})} \leq \|v_{m_2}-u_{m_2,k}\|_{L^p(\Omega_{m_1})}+\|u_{m_1,n_k}-v_{m_1}\|_{L^p(\Omega_{m_1})}.$$
Sending $k \to +\infty$ we deduce that $v_{m_2} = v_{m_1}$ almost everywhere. over $\Omega_{m_1}$, so in general $v_{m_2}$ is an extension of $v_{m_1}$ for all $m_2$ and $m_1$ with $m_2 > m_1$.  Take $u$ to be the maximal such extension, defined over all of $\Omega$.  Then for any $m$, we have that
$$\|w_n - u\|_{L^p(\Omega_m)} = \|u_{n,n}-v_m\|_{L^p(\Omega_m)} \xrightarrow{n \to +\infty} 0$$
because $\{u_{n,n}\}_{n=m}^\infty$ is a subsequence of $\{u_{m,n}\}_n$, which converges to $v_m$ in $L^p(\Omega_m)$.  

This proves the claim.  In particular, we see that $w_n \to u$ in measure.  Moreover the family $\{|w_n|^p\}_n$ is uniformly integrable by Lemma \ref{unif}.  Thus by Vitali, we get that in fact, $w_n \to u$ in $L^p(\Omega)$.

Lastly, to see that $u \in BPV(\Omega)$, note that 
$$\essVar_\Omega u = \lim_{m \to \infty} \essVar_{\Omega_m} u = \lim_{m \to \infty} \essVar_{\Omega_m} v_m \leq C_3,$$
so we are done.
\end{proof}

We now find the Gamma-limit under $L^p(\Omega)$ convergence.  

\begin{theorem}\label{gamma2}
Let $\Omega = (a,b)$ be a non-empty open interval, and let $1 \leq p < \infty$.  For each $\epsilon > 0$, define a functional $F_\epsilon:L^p(\Omega) \to \overline{\R}$ as in \eqref{eq:Fdef}. Let $\epsilon_n$ be a sequence of positive reals with $\epsilon_n \to 0^+$.  For all $u \in L^p(\Omega)$, we have
$$\Glim_{n \to +\infty} F_{\epsilon_n}(u) = \begin{cases}\alpha\essVar_\Omega u, & u \in BPV(\Omega;\pm 1) \\ +\infty, & \text{otherwise}\end{cases}$$
under $L^p(\Omega)$ convergence, where $\alpha$ is defined as in \eqref{eq:alpha}.
\end{theorem}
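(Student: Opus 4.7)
The plan is to prove the Gamma limit by establishing the liminf and limsup inequalities separately, with a scaling argument driving both. The central ingredient is a \emph{scaling identity}: on any subinterval $(p,q) \subseteq \Omega$, writing $A := \int_p^q (u^2-1)^2\,dx$ and $B := \int_p^q |u''|^2\,dx$, minimization of $\lambda^{-1}A + \lambda^3 B$ in $\lambda > 0$ gives, for every $\epsilon > 0$,
$$\int_p^q \epsilon^{-1}(u^2-1)^2 + \epsilon^3|u''|^2\,dx \;\geq\; \tfrac{4}{3^{3/4}}\,A^{3/4} B^{1/4},$$
with equality at $\epsilon = (A/(3B))^{1/4}$. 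The right-hand side is invariant under rescaling of $(p,q)$ to $(0,1)$; if in addition $u(p_+) = -1$, $u(q_-) = 1$, and $u'(p_+) = u'(q_-) = 0$, then $v(y) := u(p+(q-p)y) \in \mathscr{J}$, so $A^{3/4}B^{1/4} = \Phi(v) \geq \inf_{\mathscr{J}}\Phi$ and any such ``complete'' transition contributes at least $\tfrac{4}{3^{3/4}}\inf_{\mathscr{J}}\Phi = 2\alpha$.

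For the \textbf{limsup}, let $u \in BPV(\Omega;\{-1,1\})$ have jumps $x_1 < \cdots < x_k$ in its canonical representative (so $\essVar_\Omega u = 2k$). Given $\eta > 0$, pick $v_\eta \in \mathscr{J}$ with $\Phi(v_\eta) < \inf_{\mathscr{J}}\Phi + \eta$ and set $T^* := (3B(v_\eta)/A(v_\eta))^{1/4}$. Define $u_n^{(\eta)}$ by inserting, around each $x_j$, a copy of $v_\eta$ (suitably reflected to match the direction of the jump) rescaled onto $(x_j - \epsilon_n T^*/2,\,x_j + \epsilon_n T^*/2)$, and setting $u_n^{(\eta)} \equiv u$ off these intervals. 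The endpoint conditions of $\mathscr{J}$ together with Theorem~\ref{ACLcondition} ensure $u_n^{(\eta)} \in W^{2,p}(\Omega)$. The scaling identity is realized as equality for this construction, giving $F_{\epsilon_n}(u_n^{(\eta)}) = k \cdot \tfrac{4}{3^{3/4}}\Phi(v_\eta) \leq 2k\alpha + O(\eta)$, and since $u_n^{(\eta)}$ differs from $u$ on a set of measure $k\epsilon_n T^* \to 0$ and is uniformly bounded, $u_n^{(\eta)} \to u$ in $L^p$. A diagonal extraction in $\eta_n \to 0^+$ yields a single recovery sequence realizing $\limsup F_{\epsilon_n}(u_n) \leq \alpha \essVar_\Omega u$.

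For the \textbf{liminf}, suppose $u_n \to u$ in $L^p(\Omega)$ with $\liminf F_{\epsilon_n}(u_n) < +\infty$. Passing to a subsequence, Theorem~\ref{compactness2} forces $u \in BPV(\Omega;\{-1,1\})$ with jumps at $x_1 < \cdots < x_k$. For small $\delta > 0$ the intervals $J_j := (x_j-\delta,x_j+\delta)$ are disjoint, so by superadditivity of $\liminf$ it suffices to prove the local bound $\liminf_n \int_{J_j} \epsilon_n^{-1}(u_n^2-1)^2 + \epsilon_n^3|u_n''|^2\,dx \geq 2\alpha$ for each $j$. The plan is to select in $J_j$ endpoints $p_n < x_j < q_n$ at which $u_n(p_n) \to -1$, $u_n(q_n) \to 1$, and $u_n'(p_n), u_n'(q_n) \to 0$ (signs adjusted for the jump direction), perform a $C^1$ surgery near $p_n, q_n$ to match the $\mathscr{J}$ data exactly while perturbing the rescaled $\Phi$-value by only $o(1)$, and apply the scaling identity on $(p_n, q_n)$ to obtain $2\alpha - o(1)$; letting $n \to \infty$ gives the local bound.

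The \textbf{main obstacle} is the simultaneous selection of the endpoints $p_n, q_n$: $L^p$-convergence of $u_n$ yields only almost-everywhere information on values, and the interpolation bound $\int_{J_j} \epsilon_n|u_n'|^2\,dx = O(1)$ from Theorem~\ref{inter3} only gives $\|u_n'\|_{L^2(J_j)} = O(\epsilon_n^{-1/2})$, too coarse for Chebyshev to yield pointwise smallness of $u_n'$. The remedy is to use continuity of $u_n'$ (Theorem~\ref{ACLcondition}) together with Rolle's theorem at levels $-1+\mu_n$ and $1-\mu_n$ for a carefully chosen $\mu_n \to 0$: since $u_n \to \pm 1$ in $L^p$ on each side of $x_j$, these levels are attained at least twice in each outer subregion for large $n$, yielding between them critical points at which the energy bound on $(u_n^2-1)^2$ confines $u_n$ to values near $\pm 1$. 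The final boundary surgery to produce an exact element of $\mathscr{J}$ is then a standard cut-and-paste in $W^{2,p}$ costing $o(1)$ in $\Phi$.
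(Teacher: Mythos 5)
Your overall architecture --- localize to jump intervals, perform surgery to obtain an element of $\mathscr{J}$, and invoke the AM--GM/scaling identity $\int \epsilon^{-1}A + \epsilon^3 B \geq \frac{4}{3^{3/4}}A^{3/4}B^{1/4}$ --- matches the paper, and your limsup construction (rescaled near-optimal $v_\eta$ glued in around each jump, diagonal in $\eta$) is essentially the paper's Step 5. You also correctly observe that the degenerate cases fall out of Theorem~\ref{compactness2}. The gap is in the liminf endpoint selection.

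\textbf{The Rolle remedy does not close the gap.} If $u_n$ attains the level $1-\mu_n$ at two points, Rolle yields a critical point $c_n$ between them with $u_n'(c_n)=0$, but nothing pins down $u_n(c_n)$: between consecutive crossings $u_n$ may spike arbitrarily far from $1$, and the bound $\int_\Omega (u_n^2-1)^2\,dx \leq C\epsilon_n$ only controls the \emph{measure} of the set where $u_n$ is far from $\pm 1$, not the value at a prescribed critical point. The surgery via Lemma~\ref{glue} costs (after the $\epsilon_n^{-1}$ and $\epsilon_n^3$ scalings and with a matching interval of length $T\sim\epsilon_n$) on the order of $|u_n(x_0)\mp 1|^2 + \epsilon_n^2|u_n'(x_0)|^2$, so you must find a single point $x_0$ where \emph{both} quantities are small \emph{simultaneously}, and Rolle does not deliver the first one. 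Your plan also over-asks: you aim for $u_n'(p_n)\to 0$ pointwise, but the scaled quantity $\epsilon_n|u_n'(x_0)|$ is what must be small, and the interpolation bound $\|u_n'\|_{L^2}=O(\epsilon_n^{-1/2})$ is exactly the right order for that --- it is not ``too coarse,'' it is precisely on scale.

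\textbf{What is missing is a pigeonhole argument at scale $\epsilon_n$.} The mechanism that works: on the transition interval $(x_i,b_i)$, subdivide into $K_n = \lceil \epsilon_n^{-1}\rceil$ equal pieces; augment the energy density with the normalized distance-to-$1$ term $|u_n-1|^p/\|u_n-1\|_{L^p(x_i,b_i)}^p$, which has unit integral; the total integral is then bounded by $L+\eta+1$, so by pigeonhole some piece $(y_{j-1},y_j)$ has integral $\leq \epsilon_n(L+\eta+1)$. Applying the interpolation theorem (Theorem~\ref{inter3}) on a middle third of that piece bounds $\int \epsilon_n|u_n'|^2$ by $O(\epsilon_n)$ there, and a second pigeonhole over that middle third produces a point $x_0$ with $\epsilon_n^2|u_n'(x_0)|^2 + |u_n(x_0)-1|^2 \leq C\eta$ for large $n$. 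This is the simultaneous pointwise control you need, and it is not obtainable by Rolle at level sets. With $x_0$ in hand, the Lemma~\ref{glue} surgery and the rescaling to $\mathscr{J}$ go through exactly as you outlined, and the rest of your Step 3 (summing $2\alpha$ over jumps and sending $\eta\to 0$) is correct.
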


We begin with a useful interpolation result.

\begin{lemma}\label{glue}
Let $T > 0$ and $A,m \in \R$ with $m \neq 0$.  Then there exists $f:[0,T] \to \R$ satisfying the following properties:
\begin{enumerate}[label=(\roman*)]
    \item $f \in C^\infty([0,T])$,
    \item $f(0) = A$, $f'(0) = m$, $f(T) = 0$, and $f'(T) = 0$,
    \item $\int_0^T |f^{(k)}(x)|^2\,dx \leq C_k(A^2+m^2T^2)T^{1-2k}$ for a constant $C_k$ depending only on $k$,
    \item $\sup_{[0,T]} |f| \leq |A| + \frac12|m| \cdot|T|$.
\end{enumerate}
\end{lemma}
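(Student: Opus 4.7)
The plan is to reduce to a reference problem on $[0,1]$ via rescaling and then write down an explicit cubic Hermite interpolant. First I would seek a function $g:[0,1]\to\R$ satisfying the normalized boundary conditions $g(0) = A$, $g'(0) = mT$, $g(1) = g'(1) = 0$, and then set $f(x) := g(x/T)$. The chain rule gives $f(0) = A$, $f'(0) = m$, $f(T) = f'(T) = 0$, and, more generally, $f^{(k)}(x) = T^{-k}g^{(k)}(x/T)$, which is precisely what is needed to convert the $[0,1]$ problem into the $[0,T]$ problem.

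The natural candidate for $g$ is the cubic Hermite interpolant
$$g(s) = A\, H_0(s) + mT\, H_1(s),$$
where $H_0(s) := 1 - 3s^2 + 2s^3$ and $H_1(s) := s - 2s^2 + s^3$. A direct check confirms that $H_0(0) = 1$, $H_0'(0) = 0$, $H_0(1) = H_0'(1) = 0$ and $H_1(0) = 0$, $H_1'(0) = 1$, $H_1(1) = H_1'(1) = 0$. Since $g$ is a polynomial, $f \in C^\infty([0,T])$, giving (i), and the boundary conditions in (ii) are immediate.

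For (iii), the change of variables $x = Ts$ combined with $f^{(k)}(x) = T^{-k}g^{(k)}(x/T)$ reduces the integral to
$$\int_0^T |f^{(k)}(x)|^2\,dx = T^{1-2k}\int_0^1 |g^{(k)}(s)|^2\,ds.$$
Expanding $g^{(k)} = A\, H_0^{(k)} + mT\, H_1^{(k)}$ and applying $(a+b)^2 \leq 2a^2 + 2b^2$ bounds the remaining integral by $2(A^2 + m^2T^2)\max\bigl(\|H_0^{(k)}\|_{L^2(0,1)}^2,\ \|H_1^{(k)}\|_{L^2(0,1)}^2\bigr)$, which is of the desired form with $C_k$ depending only on $k$. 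For (iv), we have $\sup_{[0,1]}|H_0| = 1$ attained at $s = 0$, while $H_1'(s) = (3s-1)(s-1)$ yields $\sup_{[0,1]}|H_1| = H_1(1/3) = 4/27 < 1/2$; therefore
$$\sup_{[0,T]}|f| = \sup_{[0,1]}|g| \leq |A|\cdot 1 + |m|T\cdot \tfrac{4}{27} \leq |A| + \tfrac{1}{2}|m|T.$$

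There is no genuine obstacle here; the whole argument is essentially mechanical once one chooses to rescale to $[0,1]$. The only mildly delicate point is (iv), which requires choosing the Hermite basis function $H_1$ with $\sup|H_1| \leq 1/2$ rather than an arbitrary smooth interpolant. The cubic Hermite basis satisfies this comfortably, so no refinement (such as multiplying by a cutoff or choosing a higher-degree polynomial) is necessary.
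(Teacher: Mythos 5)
Your proof is correct, and it takes a genuinely different route from the paper. The paper constructs $f$ by mollifying the piecewise linear connector $g(x) = (mx+A)\cdot 1_{(-T/2,T/2)}(x)$ with a symmetric mollifier at scale $\epsilon = T/4$, then works through Minkowski's inequality for integrals and the scaling of $\varphi_\epsilon^{(k)}$ to get the bounds in (iii) and (iv). You instead rescale to $[0,1]$ and use the explicit cubic Hermite interpolant $g(s) = A H_0(s) + mT H_1(s)$. Both approaches are valid, but yours is more elementary and transparent: the $L^2$ bounds in (iii) follow from the elementary inequality $(a+b)^2 \le 2a^2+2b^2$ plus finite polynomial norms, and the sup bound in (iv) falls out of $\sup|H_0|=1$ and $\sup|H_1|=4/27 < 1/2$. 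Your approach also gives $f^{(k)} \equiv 0$ for $k\ge 4$, which trivially satisfies (iii), whereas the mollification construction is nonpolynomial and has nontrivial derivatives of all orders. The only thing the paper's mollification buys is that the same template applies if one later needs smooth connectors with higher-order boundary matching; for the four conditions actually stated, the cubic Hermite interpolant is the cleaner choice.
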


\begin{proof}
We take $g:\R \to \R$ with
$$g(x) = \begin{cases}mx+A, & -T/2 < x < T/2 \\ 0, & \text{otherwise}\end{cases}.$$
Now let $\varphi \in C^\infty_c((-1,1))$ be a non-negative and symmetric mollifier, so that $\varphi(x) = \varphi(-x)$ for all $x \in (-1,1)$.  For each $\epsilon > 0$ we define $\varphi_\epsilon(x) := \frac1\epsilon\varphi\left(\frac x\epsilon\right)$ and the mollification
$$g_\epsilon(x) := \int_{x-\epsilon}^{x+\epsilon} g(y)\varphi_\epsilon(x-y)\,dy.$$
For $\epsilon < T/2$ we recover the properties $g_\epsilon(0) = A$, $g'_\epsilon(0) = m$, $g_\epsilon(T) = 0$, and $g_\epsilon'(T) = 0$, because by symmetry of $\varphi_\epsilon$ we have that $g_\epsilon(x) = mx+A$ for $x$ near $0$ and $g_\epsilon(x) = 0$ for $x$ near $T$.  
Note that for each $k$, we have that $|g(y)\varphi_\epsilon^{(k)}(x-y)| \leq \epsilon^{-k}\|\varphi^{(k)}\|_\infty|g(y)|$ for all $x,y \in \R$.  As $g$ is integrable, we may use dominated convergence to obtain
\begin{align*}
  g^{(k)}_\epsilon(x) &= \d[k]{}{x}\int_{x-\epsilon}^{x+\epsilon} g(y)\varphi_\epsilon(x-y)\,dy = \d[k]{}{x}\int_\R g(y)\varphi_\epsilon(x-y)\,dy \\
  &= \int_\R g(y)\varphi_\epsilon^{(k)}(x-y)\,dy = \int_{x-\epsilon}^{x+\epsilon} g(y)\varphi_\epsilon^{(k)}(x-y)\,dy.
\end{align*}
Now we integrate in $x$ and apply Minkowski's inequality for integrals to get
\begin{align*}
\int_0^T g_\epsilon^{(k)}(x)^2\,dx &= \int_0^T\left(\int_{x-\epsilon}^{x+\epsilon} g(y)\varphi_\epsilon^{(k)}(x-y)\,dy\right)^2\,dx \\
&\leq \left(\int_{-\epsilon}^{T+\epsilon}\left(\int_{(y-\epsilon,y+\epsilon) \cap [0,T]}|g(y)|^2|\varphi_\epsilon^{(k)}(x-y)|^2\,dx\right)^{1/2}\,dy\right)^2 \\
&\leq \left(\int_{-\epsilon}^{T+\epsilon}|g(y)|\left(\int_{y-\epsilon}^{y+\epsilon}|\varphi_\epsilon^{(k)}(x-y)|^2\,dx\right)^{1/2}\,dy\right)^2\\
&= \left(\int_{-\epsilon}^{T+\epsilon}|g(y)|\,dy\right)^2\int_{-\epsilon}^{\epsilon}|\varphi_\epsilon^{(k)}(z)|^2\,dz.
\end{align*}
Handling the first integral is a matter of computation, which gets us the bound
$$\int_{-\epsilon}^{T+\epsilon}|g(y)|\,dy \leq \int_{-\epsilon}^{T+\epsilon} |m| \cdot |y| + |A|\,dy = \frac{|m|}{2} \cdot \left((T+\epsilon)^2 + \epsilon^2\right) + (T+2\epsilon)|A|.$$
As for the second integral, we may write $\varphi_\epsilon^{(k)}(x) = \d[k]{}{x}\frac{1}{\epsilon}\varphi\left(\frac{x}{\epsilon}\right) = \frac{1}{\epsilon^{k+1}}\varphi^{(k)}\left(\frac{x}{\epsilon}\right)$, so that
$$\int_{-\epsilon}^{\epsilon}|\varphi_\epsilon^{(k)}(z)|^2\,dz = \int_{-\epsilon}^{\epsilon}\frac{1}{\epsilon^{2k+2}}|\varphi^{(k)}(z/\epsilon)|^2\,dz = \frac{1}{\epsilon^{2k+1}}\int_{-1}^{1}|\varphi^{(k)}(z)|^2\,dz.$$
Altogether, we get that
$$\int_0^T g_\epsilon^{(k)}(x)^2\,dx \leq \frac{\left(\frac{|m|}{2} \cdot \left((T+\epsilon)^2 + \epsilon^2\right) + (T+2\epsilon)|A|\right)^2}{\epsilon^{2k+1}} \cdot \|\varphi^{(k)}\|_{L^2(\R)}^2.$$
Choosing $\epsilon = T/4$, and applying the inequality $(a+b)^2 \leq 2a^2+2b^2$, we see that
$$\int_0^T g_{T/4}^{(k)}(x)^2\,dx \leq \frac{|m|^2T^4+|A|^2T^2}{T^{2k+1}} \cdot C_k = C_k(|A|^2+|m|^2T^2)T^{1-2k}$$ 
for a constant $C_k$ depending only on $k$.

Lastly, since $|g(x)| \leq |m|\cdot|x| + |A| \leq |m| \cdot \frac{|T|}{2} + |A|$ for all $x$, we must have 
$$|g_{T/4}(x)| \leq \int_{x-\epsilon}^{x+\epsilon}|g(y)|\varphi_\epsilon(x-y)\,dy \leq \left(|m| \cdot \frac{|T|}{2} + |A|\right)\int_{x-\epsilon}^{x+\epsilon}\varphi_\epsilon(x-y)\,dy = |m| \cdot \frac{|T|}{2} + |A|$$
for all $x$, so that in particular $\sup_{[0,T]}|g_{T/4}(x)| \leq |m| \cdot \frac{|T|}{2} + |A|$.  We conclude that taking $f \equiv g_{T/4}$ satisfies the required properties.
\end{proof}

Now we prove the theorem.

\begin{proof} $ $\newline

\noindent\textbf{Step 1}

If $u \not\in L^p(\Omega;\pm 1)$ then for some $\epsilon > 0$, the set $E = \{x \in \Omega : ||u(x)|-1| \geq \epsilon\}$ has positive measure.  We now claim that the inequality $(z^2-1)^2 \geq \epsilon^{2-p}||z|-1|^p$ holds for all $z$ with $||z|-1| \geq \epsilon$.  To see this, note that for all such $z$ with $z \geq 0$ we have
$$|z+1|^2 \cdot |z-1|^{2-p} \geq \epsilon^{2-p}$$
which rearranges to the claimed inequality, and similarly for $z \leq 0$.  Hence we may use this and Minkowski's inequality to write
\begin{align*}
\int_E \epsilon_n^{-1}(u_n^2-1)^2\,dx &\geq \int_E \epsilon_n^{-1}\epsilon^{2-p}||u_n|-1|^p\,dx \\
&\geq \epsilon_n^{-1}\epsilon^{2-p}\left(\left(\int_E ||u|-1|^p\,dx\right)^{1/p} - \left(\int_E \big||u|-|u_n|\big|^p\,dx\right)^{1/p}\right)^p \\
&\geq \epsilon_n^{-1}\epsilon^{2-p}\left(\epsilon\leb^1(E)^{1/p} - \left(\int_E |u-u_n|^p\,dx\right)^{1/p}\right)^p.
\end{align*}
Now it is clear that $F_{\epsilon_n}(u_n) \to +\infty$ because $\|u-u_n\|_{L^p(E)} \to 0$ and $\epsilon_n^{-1} \to +\infty$.  Hence $\Glim_{n \to \infty} F_{\epsilon_n}(u) = +\infty$.

\noindent\textbf{Step 2}

We now consider the case $u \in BPV(\Omega;\pm 1)$.  

Suppose $u$ jumps between $-1$ and $1$ or vice versa at the  $J$  points $x_1,x_2,\cdots,x_J \in \Omega$.  Find $\delta_1,\cdots,\delta_J > 0$ so small that the intervals $(a_i,b_i)$ are disjoint, where $(a_i,b_i) := (x_i-\delta_i,x_i+\delta_i)$.


Let $u_n \to u$ in $L^p(\Omega)$.  Suppose $L = \ds\liminf_{n \to \infty} F_{\epsilon_n}(u_n)$.  For the liminf inequality, we need to prove that $L \geq \alpha\essVar u$.

By extraction of a subsequence, assume that actually $L = \dlim_{n \to \infty} F_{\epsilon_n}(u_n)$.  Fix an arbitrary $\eta \in (0,1)$.  Then we may find $N_\eta$ so large that
$$L + \eta \geq F_{\epsilon_n}(u_n)$$
for all $n \geq N_\eta$.

Pick some $x_i$ and, for simplicity, assume without loss of generality that $u$ jumps from $-1$ to $1$ at $x_i$.  

The key claim that we shall prove in this step is that, for all $n$ large enough, we may modify $u_n$ in $(a_i,b_i)$ by ``anchoring" it to $-1$ at $a_i$ and to $1$ at $b_i$ with a derivative of $0$ at both points, without spending more than $C\eta$ potential energy for a constant $C$ not depending on $\eta$.  More concretely, there exists $v_{n,i}:[a_i,b_i] \to \R$ such that:
\begin{itemize}
    \item $v_{n,i} \in W^{2,p}((a_i,b_i))$,
    \item $v_{n,i}(a_i) = -1$, $v_{n,i}(b_i) = 1$,
    \item $v_{n,i}'(a_i) = v_{n,i}'(b_i) = 0$,
    \item $\ds  \int_{a_i}^{b_i} \epsilon_n^{-1}(v_{n,i}^2-1)^2 + \epsilon_n^3|v_{n,i}''|^2\,dx \leq \int_{a_i}^{b_i} \epsilon_n^{-1}(u_n^2-1)^2 + \epsilon_n^3|u_n''|^2\,dx + C\eta$.
\end{itemize}

To do this, we consider only $n \geq N_\eta$, and it suffices to modify $u_n$ in $(x_i,b_i)$ by affixing to $1$ in the manner described.  First, observe that since $u_n \to u$ in $L^p(\Omega)$, we have in particular that $u_n \to 1$ in $L^p((x_i,b_i))$, so $\|u_n-1\|_{L^p((x_i,b_i))} \to 0$ as $n \to +\infty$.  Putting this aside, note that we also have $\int_{x_i}^{b_i} \frac{|u_n-1|^p}{\|u_n-1\|_{L^p((x_i,b_i))}^p}\,dx = 1$.  Combining this with $F_{\epsilon_n}(u_n) \leq L+\eta$, it follows that
$$\int_{x_i}^{b_i} \epsilon_n^{-1}(u_n^2-1)^2 + \epsilon_n^3|u_n''|^2 + \frac{|u_n-1|^p}{\|u_n-1\|_{L^p((x_i,b_i))}^p}\,dx \leq L+\eta+1.$$

Now let $K_n = \lceil \epsilon_n^{-1} \rceil$ and subdivide the interval $(x_i,b_i)$ into $K_n$ same-length intervals $(y_{j-1},y_j)$, so that:
$$x_i = y_0 < y_1 < \cdots < y_{K_n} = b_i$$
By a ``pigeonhole principle-like argument", there exists $1 \leq j \leq K_n$ such that
\begin{equation}
    \int_{y_{j-1}}^{y_j} \epsilon_n^{-1}(u_n^2-1)^2 + \epsilon_n^3|u_n''|^2 + \frac{|u_n-1|^p}{\|u_n-1\|_{L^p((x_i,b_i))}^p}\,dx \leq \frac{L+\eta+1}{K_n} \leq \epsilon_n(L+\eta+1). \label{eq:pigeonbound}
\end{equation}
A particular consequence is the bound $\int_{y_{j-1}}^{y_j} \epsilon_n^{-1}(u_n^2-1)^2 + \epsilon_n^3|u_n''|^2 \,dx \leq \frac{L+\eta+1}{K_n} \leq \epsilon_n(L+\eta+1)$, which implies by Theorem \ref{inter3} (applied with the intervals in the inclusion $(\frac23y_{j-1}+\frac13y_j,\frac13y_{j-1}+\frac23y_j) \Subset (y_{j-1},y_j)$) that
$$\int_{\frac23y_{j-1}+\frac13y_j}^{\frac13y_{j-1}+\frac23y_j} \epsilon_n|u_n'|^2\,dx \leq c\int_{y_{j-1}}^{y_j} \epsilon_n^{-1}(u_n^2-1)^2 + \epsilon_n^3|u_n''|^2\,dx \leq \epsilon_nc(L+\eta+1)$$
for all sufficiently large $n$, for a universal constant $c$.  Combining this with \eqref{eq:pigeonbound}, we obtain
$$\int_{\frac23y_{j-1}+\frac13y_j}^{\frac13y_{j-1}+\frac23y_j} \epsilon_n|u_n'|^2 + \frac{|u_n-1|^p}{\|u_n-1\|_{L^p((x_i,b_i))}^p}\,dx \leq \epsilon_n(c+1)(L+\eta+1).$$
Let the integrand $\epsilon_n|u_n'|^2 + \frac{|u_n-1|^p}{\|u_n-1\|_{L^p((x_i,b_i))}^p}$ be $H$.  If there does not exist $x \in (\frac23y_{j-1}+\frac13y_j,\frac13y_{j-1}+\frac23y_j)$ for which $H(x) \leq \frac{6K_n\epsilon_n(c+1)(L+\eta+1)}{b_i-x_i}$, then 
$$\epsilon_n(c+1)(L+\eta+1) \geq \int_{\frac23y_{j-1}+\frac13y_j}^{\frac13y_{j-1}+\frac23y_j} H\,dx \geq \frac{6K_n\epsilon_n(c+1)(L+\eta+1)}{b_i-x_i} \cdot \frac{y_j-y_{j-1}}{3}$$
$$=\frac{6K_n\epsilon_n(c+1)(L+\eta+1)}{b_i-x_i} \cdot \frac{b_i-x_i}{3K_n} = 2\epsilon_n(c+1)(L+\eta+1),$$
which is a contradiction.  We conclude that for all large enough $n$, there exists $x_0 \in (\frac23y_{j-1}+\frac13y_j,\frac13y_{j-1}+\frac23y_j)$, depending on $n$, such that $H(x_0) \leq (C_1/2)K_n\epsilon_n \leq (C_1/2) \cdot \frac{\epsilon_n+1}{\epsilon_n} \cdot \epsilon_n = (C_1/2)(\epsilon_n+1) \leq C_1$ for a constant $C_1$ that does not depend on $n$.  In particular, we now know that such an $x_0$ satisfies:
\begin{enumerate}
    \item $\epsilon_n^2|u_n'(x_0)|^2 \leq \epsilon_nC_1$,
    \item $|u_n(x_0)-1| \leq C_1^{1/p}\|u_n-1\|_{L^p((x_i,b_i))}$.
\end{enumerate}

These properties, combined with the fact that $u_n \to 1$ in $L^p(x_i,b_i)$, imply that, for all large enough $n$, we may obtain $\epsilon_n^2|u_n'(x_0)|^2 + |u_n(x_0)-1|^2 \leq \eta$.

Let $A = u_n(x_0)-1$, $m = u_n'(x_0)$, and $T = y_j - x_0$.  Using these constants, we may find a smooth $f:[0,T] \to \R$ as described in Lemma \ref{glue}.  We are now ready to define $v_{n,i}$ over $(x_i,b_i)$ as
$$v_{n,i}(x) := \begin{cases}u_n(x), & x_i < x \leq x_0 \\ f(x-x_0)+1, & x_0 < x \leq y_j \\ 1, & y_j < x < b_i\end{cases}.$$
By virtue of $f$ being a smooth connector, we must have $v_{n,i} \in W^{2,p}(x_i,b_i)$.  Moreover, by property (iii) in Lemma \ref{glue}, we have the bounds
\begin{equation}
    \int_{x_i}^{b_i} (v_{n,i}-1)^2\,dx = \int_{x_0}^{y_j} |f(x-x_0)|^2\,dx \leq C_0(A^2+m^2T^2)T \label{eq:vbound1}
\end{equation}
and
\begin{equation}
    \int_{x_i}^{b_i} |v_{n,i}''|^2\,dx = \int_{x_0}^{y_j} |f''(x-x_0)|^2\,dx \leq C_2(A^2+m^2T^2)T^{-3} \label{eq:vbound2}
\end{equation}
for universal constants $C_0,C_2 > 0$.  

Since \eqref{eq:vbound1} is not quite a bound on the integral of $(v_{n,i}^2-1)^2$, we will need to prove the inequality
\begin{equation}
  \int_{x_i}^{b_i} (v_{n,i}^2-1)^2\,dx \leq C'\int_{x_i}^{b_i} (v_{n,i}-1)^2\,dx \label{eq:transition}  
\end{equation}
for all $n$ large enough, for some constant $C' > 0$.  Indeed, observe that for all $x \in (x_i,b_i)$, we have by property (iv) of Lemma \ref{glue} that
$$|v_{n,i}(x)+1| \leq 2+|v_{n,i}-1| \leq 2+\sup_{[0,T]} |f| \leq 2+|A|+\frac12|m| \cdot |T|.$$
Now, $T \leq \epsilon_n$, and from $\epsilon_n^2|u_n'(x_0)|^2 + |u_n(x_0)-1|^2 \leq \eta \leq 1$ we have that $|A| = |u_n(x_0)-1| \leq 1$ and $|m| \cdot |T| = |u_n'(x_0)| \cdot (y_j-x_0) \leq \epsilon_n|u_n'(x_0)| \leq 1$.  We hence obtain $|v_{n,i}(x)+1| \leq 4$ for all $x \in (x_i,b_i)$, and so taking $C' = 4^2$ we get
$$\int_{x_i}^{b_i} (v_{n,i}^2-1)^2\,dx = \int_{x_i}^{b_i} (v_{n,i}-1)^2(v_{n,i}+1)^2\,dx \leq C'\int_{x_i}^{b_i} (v_{n,i}-1)^2,$$
as we wanted.

We may now add \eqref{eq:vbound1} to \eqref{eq:vbound2} and apply \eqref{eq:transition} to obtain
\begin{align*}
    &\phantom{{}={}}\int_{x_0}^{b_i} \epsilon_n^{-1}(v_{n,i}^2-1)^2 + \epsilon_n^3|v_{n,i}''|^2\,dx \\
    &\leq \max(C_0,C_2)\max(1,C')\left[\frac{T}{\epsilon_n}(A^2+m^2T^2) + \frac{\epsilon_n^3}{T^3}(A^2+m^2T^2)\right] \\
 &= C_3(A^2+m^2T^2)\left(\frac{T}{\epsilon_n} + \frac{\epsilon_n^3}{T^3}\right)
\end{align*}
for some constant $C_3 > 0$.  

Since $\frac23y_{j-1}+\frac13y_j < x_0 < \frac13y_{j-1}+\frac23y_j$, we have that
$(b_i-x_i)\epsilon_n \geq \frac{b_i-x_i}{K_n} = y_j-y_{j-1} > y_j - x_0 = T > y_j - (\frac13y_{j-1}+\frac23 y_j) = \frac13(y_j-y_{j-1}) = \frac{b_i-x_i}{3K_n} \geq \frac{b_i-x_i}{3} \cdot \frac{\epsilon_n}{1+\epsilon_n}$.  So we may use this to write
\begin{align*}
 &\phantom{{}={}}\int_{x_0}^{b_i} \epsilon_n^{-1}(v_{n,i}^2-1)^2 + \epsilon_n^3|v_{n,i}''|^2\,dx \\
&\leq C_3((u_n(x_0)-1)^2+|u_n'(x_0)|^2T^2)\left(\frac{T}{\epsilon_n} + \frac{\epsilon_n^3}{T^3}\right) \\
&\leq C_3((u_n(x_0)-1)^2+|u_n'(x_0)|^2(b_i-x_i)^2\epsilon_n^2)\left(\frac{(b_i-x_i)\epsilon_n}{\epsilon_n} + \epsilon_n^3 \cdot \frac{27(1+\epsilon_n)^3}{(b_i-x_i)^3\epsilon_n^3}\right) \\
&\leq C_3((u_n(x_0)-1)^2+|u_n'(x_0)|^2(b_i-x_i)^2\epsilon_n^2)\left(b_i-x_i + \frac{27 \cdot 8}{(b_i-x_i)^3}\right) \\
&\leq C_4((u_n(x_0)-1)^2+|u_n'(x_0)|^2\epsilon_n^2) \leq C_4\eta,
\end{align*}
Where $C_4 > 0$ is a constant with no dependence on $n$, and we have applied our choice of $x_0$.  At last, it follows that
\begin{align*}
&\phantom{{}={}}\int_{x_i}^{b_i} \epsilon_n^{-1}(v_{n,i}^2-1)^2 + \epsilon_n^3|v_{n,i}''|^2\,dx \\
&= \int_{x_i}^{x_0} \epsilon_n^{-1}(v_{n,i}^2-1)^2 + \epsilon_n^3|v_{n,i}''|^2\,dx + \int_{x_0}^{b_i} \epsilon_n^{-1}(v_{n,i}^2-1)^2 + \epsilon_n^3|v_{n,i}''|^2\,dx \\
&= \int_{x_i}^{x_0} \epsilon_n^{-1}(u_n^2-1)^2 + \epsilon_n^3|u_n''|^2\,dx + \int_{x_0}^{b_i} \epsilon_n^{-1}(v_{n,i}^2-1)^2 + \epsilon_n^3|v_{n,i}''|^2\,dx \\
&= \int_{x_i}^{x_0} \epsilon_n^{-1}(u_n^2-1)^2 + \epsilon_n^3|u_n''|^2\,dx + C_4\eta \\
&\leq \int_{x_i}^{b_i} \epsilon_n^{-1}(u_n^2-1)^2 + \epsilon_n^3|u_n''|^2\,dx + C_4\eta. 
\end{align*}

Doing the same thing for the interval $(a_i,x_i)$, we finally obtain
\begin{equation}
    \int_{a_i}^{b_i} \epsilon_n^{-1}(v_{n,i}^2-1)^2 + \epsilon_n^3|v_{n,i}''|^2\,dx \leq \int_{a_i}^{b_i} \epsilon_n^{-1}(u_n^2-1)^2 + \epsilon_n^3|u_n''|^2\,dx + 2C_4\eta. \label{eq:key}
\end{equation}
Thus the key claim has been proven.

\textbf{Step 3}

We may now complete the liminf argument.  We recall the definitions of the family $\mathscr{J}$ and the constant $\alpha$ from $\eqref{eq:famJ}$ and $\eqref{eq:alpha}$ respectively.

Let us first write
$$L+\eta \geq F_{\epsilon_n}(u_n) \geq \sum_{i=1}^J \int_{a_i}^{b_i} \epsilon_n^{-1}(u_n^2-1)^2 + \epsilon_n^3|u_n''|^2\,dx$$
for all large enough $n$.  We now apply the key claim \eqref{eq:key} to every interval $(a_i,b_i)$ to get
$$L+\eta \geq -2C_4J\eta+\sum_{i=1}^J \int_{a_i}^{b_i} \epsilon_n^{-1}(v_{n,i}(x)^2-1)^2 + \epsilon_n^3|v_{n,i}''(x)|^2\,dx.$$
We now apply the change of variables $y = \frac{x-a_i}{b_i-a_i}$.  Defining $w_{n,i}:[0,1] \to \R$ via $w_{n,i}(y) = v_{n,i}\left((b_i-a_i)y+a_i\right)$, we have that $w_{n,i}''(y) = (b_i-a_i)^2v_{n,i}''\left((b_i-a_i)y+a_i\right)$, so that we get
\begin{align*}
L+\eta &\geq -2C_4J\eta+\sum_{i=1}^J (b_i-a_i)\int_0^1 \epsilon_n^{-1}(w_{n,i}(y)^2-1)^2 + \frac{\epsilon_n^3}{(b_i-a_i)^4}|w_{n,i}''(y)|^2\,dy \\
&= -2C_4J\eta+\sum_{i=1}^J \frac{b_i-a_i}{\epsilon_n}\int_0^1 (w_{n,i}(y)^2-1)^2\,dy + \frac{\epsilon_n^3}{(b_i-a_i)^3}\int_0^1|w_{n,i}''(y)|^2\,dy,
\end{align*}
and by applying the weighted AM-GM inequality we may go down again to get
\begin{align*}
  L + \eta &\geq -2C_4J\eta+\sum_{i=1}^J \frac{4}{3^{3/4}}\left(\int_0^1 (w_{n,i}(y)^2-1)^2\,dy\right)^{3/4}\left(\int_0^1|w_{n,i}''(y)|^2\,dy\right)^{1/4}  \\
  &= -2C_4J\eta + \sum_{i=1}^J \frac{4}{3^{3/4}}\Phi(w_{n,i}).
\end{align*}
Lastly, as $w_{n,i} \in \mathscr{J}$ for all $i$, we have that $\frac{4}{3^{3/4}}\Phi(w_{n,i}) \geq \frac{4}{3^{3/4}}\inf_{w \in \mathscr{J}} \Phi(w) = 2\alpha$, thus
\begin{align*}
  L + \eta &\geq -2C_4J\eta + \sum_{i=1}^J 2\alpha \\
  &= -2C_4J\eta + 2J\alpha = -2C_4J\eta + \alpha\essVar_\Omega u.
\end{align*}
Hence $L+\eta \geq -2C_4J\eta + \alpha\essVar_\Omega u$.  As $\eta$ was arbitrary, we conclude that $L \geq \alpha\essVar_\Omega u$, as needed.  This proves the liminf inequality for $u \in BPV(\Omega;\pm 1)$.

\noindent\textbf{Step 4}

In this step, we briefly resolve the case $u \in L^2(\Omega;\pm 1) \setminus BPV(\Omega;\pm 1)$.

Showing that $F_{\epsilon_n}(u_n) \to +\infty$ in this case (which is sufficient to conclude the Gamma limit) requires modification of the argument in Steps 2 and 3.  We sketch the proof:  Find $2N$ Lebesgue points $a < x_1 < y_1 < \cdots < x_N < y_N < b$ for which $u(x_i) = -1$ and $u(y_i) = 1$.  For each $i$ we may find an interval $(x_i,x_i+\delta)$ for which $u = -1$ in most of $(x_i,\delta)$ and an interval $(y_i-\delta,y_i)$ for which $u = 1$ in most of $(y_i-\delta,y_i)$.  We can then apply the pigeonhole argument in these intervals by using the fact that $\|u_n+1\|_{L^p((x_i,x_i+\delta) \cap \{u = -1\}} \to 0$ and $\|u_n-1\|_{L^p((y_i-\delta,y_i) \cap \{u = 1\}} \to 0$.  In the end, we obtain a lower bound for $F_{\epsilon_n}(u_n)$ that tends to $+\infty$ when we send $N \to +\infty$.

\noindent\textbf{Step 5}

The remaining thing to prove is the limsup inequality for $u \in BPV(\Omega;\pm 1)$.  This entails finding a sequence $u_n \in W^{2,p}(\Omega)$ for which $u_n \to u$ in $L^p(\Omega)$ and $\limsup_{n \to \infty} F_{\epsilon_n}(u_n) \leq \alpha \Var_\Omega u$.  

First, recall the definition of $\Phi$ as in \eqref{eq:Phi} and the family $\mathscr{J}$ as in \eqref{eq:famJ}.  Define the subfamily
$$\mathscr{J}_n := \left\{h \in \mathscr{J} : \frac{\int_0^1 |h''|^2\,dx}{\int_0^1 (h^2-1)^2\,dx} \leq \frac{1}{\epsilon_n}\right\}.$$
Since $\epsilon_n \to 0^+$, it is clear that $\bigcup_{n=1}^\infty \mathscr{J}_n = \mathscr{J}$.
Thus if we define the sets
$$S_n := \left\{\Phi(h) : h \in \mathscr{J}_n\right\},$$
$$S := \left\{\Phi(h) : h \in \mathscr{J}\right\},$$
then we have $\bigcup_{n=1}^\infty S_n = S$, and it is not hard to show that $\lim_{n \to \infty} \inf S_n = \inf S$.
We note also that $\alpha$, as defined in \eqref{eq:alpha}, may be written as $\alpha = \frac{2}{3^{3/4}} \inf S$.

For all $n$, find $h_n \in \mathscr{J}_n$ for which
\begin{equation}
  \inf S_n \leq \Phi(h_n) \leq \frac1n + \inf S_n.  \label{eq:hbound}
\end{equation}
As in Step 2, suppose $u$ ``jumps" at the points $x_1,x_2,\cdots,x_J \in \Omega$, and find pairwise disjoint intervals $(a_i,b_i) \subseteq \Omega$ with $x_i \in (a_i,b_i)$.  Consider some $x_i$, and without loss of generality assume that $u$ jumps from $-1$ to $1$ at $x_i$.  Let $\zeta_n \to 0$ be a sequence of positive reals that we shall choose later.  We now define $u_n$ over $(a_i,b_i)$ as
$$u_n(x) := \begin{cases}-1, & a_i < x < x_i - \zeta_n/2 \\ h_n\left(\frac{x-x_i+\zeta_n/2}{\zeta_n}\right), & x_i-\zeta_n/2 \leq x \leq x_i+\zeta_n/2 \\ 1, & x_i+\zeta_n/2 < x < b_i\end{cases}.$$
We define $u_n$ similarly over all other intervals $(a_j,b_j)$, and in $\Omega \setminus \bigcup_{j=1}^J (a_j,b_j)$ we let $u_n$ agree with $u$.  Since $h_n(0_+) = -1$, $h_n(1_-) = 1$, and $h_n'(0_+) = h_n'(1_-) = 0$, we have that $u_n \in W^{2,p}(\Omega)$.  Moreover, $u_n \to u$ almost everywhere as a consequence of $\zeta_n \to 0^+$, and $\{|u_n|\}_n$ is uniformly integrable, so by Vitali we have $u_n \to u$ in $L^p(\Omega)$.  It remains to verify the limsup inequality.  We may compute
\begin{align*}
F_{\epsilon_n}(u_n) &= \sum_{i=1}^J \int_{a_i}^{b_i} \epsilon_n^{-1}(u_n^2-1)^2+\epsilon_n^3|u_n''|^2\,dx \\
&=\sum_{i=1}^J \int_{a_i}^{b_i} \epsilon_n^{-1}\left(h_n\left(\frac{x-x_i+\zeta_n/2}{\zeta_n}\right)^2-1\right)^2+\frac{\epsilon_n^3}{\zeta_n^4}\left|h_n''\left(\frac{x-x_i+\zeta_n/2}{\zeta_n}\right)\right|^2\,dx,
\end{align*}
and after changing variables we get
\begin{align}
F_{\epsilon_n}(u_n)&=\sum_{i=1}^J \zeta_n\int_0^1 \epsilon_n^{-1}\left(h_n\left(y\right)^2-1\right)^2+\frac{\epsilon_n^3}{\zeta_n^4}\left|h_n''\left(y\right)\right|^2\,dy \nonumber \\
&=\sum_{i=1}^J \frac{\zeta_n}{\epsilon_n}\int_0^1 \left(h_n\left(y\right)^2-1\right)^2\,dy+\frac{\epsilon_n^3}{\zeta_n^3}\int_0^1\left|h_n''\left(y\right)\right|^2\,dy. \label{eq:limsup1}
\end{align}
We now choose $\zeta_n$ so that we obtain the equality case in the AM-GM inequality.  Specifically, we choose
$$\zeta_n = \epsilon_n\left(\frac{3\dint_0^1\left|h_n''\left(y\right)\right|^2\,dy}{\dint_0^1 \left(h_n\left(y\right)^2-1\right)^2\,dy}\right)^{1/4}.$$
Showing that this choice is valid for all sufficiently large $n$ amounts to proving that $\zeta_n \to 0^+$.  Fortunately, since $h_n \in \mathscr{J}_n$, we have that
$$\zeta_n \leq \epsilon_n\left(\frac{3}{\epsilon_n}\right)^{1/4} = 3^{1/4}\epsilon_n^{3/4} \to 0^+.$$
With the choice of $\zeta_n$ justified, we may now continue the computation in \eqref{eq:limsup1} by using the choice of $\zeta_n$ and \eqref{eq:hbound} to write
$$F_{\epsilon_n}(u_n) = \sum_{i=1}^J \frac{4}{3^{3/4}}\left(\int_0^1 (h_n^2-1)^2\,dy\right)^{3/4}\left(\int_0^1|h_n''|^2\,dy\right)^{1/4} \leq \frac{4J}{3^{3/4}}\left(\frac1n+\inf S_n\right).$$
Taking the limsup, we obtain
$$\limsup_{n \to \infty} F_{\epsilon_n}(u_n) \leq \limsup_{n \to \infty} \frac{4J}{3^{3/4}}\left(\frac1n+\inf S_n\right) = \frac{2 \Var_\Omega u}{3^{3/4}}\left(\inf S\right) = \alpha\Var_\Omega u.$$
This completes the proof.
\end{proof}

\section{Boundary Conditions}

The Gamma limit will change upon restricting to the boundary conditions $u(a_+) = a_\epsilon$ and $u(b_-) = b_\epsilon$ for $a_\epsilon \to -1$ and $b_\epsilon \to 1$ as $\epsilon \to 0^+$.  A portion of the work needed to account for the boundary conditions has already been done in the proof of Theorem \ref{gamma2}, but there is still much to be done.  Intuitively, every jump in the interior of $(a,b)$ induces a factor of $\alpha$, whereas a jump at the boundary induces a factor of $\beta(t)$ depending on the height of the jump.  Recall that $\beta(t)$ is defined as in the statement of Theorem \ref{boundary2}, and we will again reference the family $\mathscr{J}'(t)$ and the functional $\Phi$.

We first introduce a new family of functions $\mathscr{J}'(t)$, defined as
$$\mathscr{J}'_\infty(t) := \left\{u \in W^{2,\infty}(-\infty,0) : u(0) = t \text{ and } u(x) = -1 \ \forall x \leq -L \text{ for some }L > 0\right\}$$
for a parameter $t \in \R$.  We associate with each $u \in \mathscr{J}'_\infty(t)$ the constant $L_u$, where $-L_u$ is the first time that $u$ reaches $-1$ and remains at this value indefinitely.  That is,
$$L_u := \inf\left\{L > 0 : u(x) = -1\ \forall x \leq -L\right\}.$$
We also define a new functional $\Psi:W^{2,\infty}(-\infty,0) \to \R$ via
$$\Psi(u) := \int_{-\infty}^0 (u^2-1)^2 + |u''|^2\,dx.$$
The relevance of these constructs is as follows.

\begin{lemma}
$$\beta(t) = \frac{4}{3^{3/4}}\inf_{u \in \mathscr{J}'(t)}\Phi(u) = \inf_{v \in \mathscr{J}'_\infty(t)}\Psi(v).$$
\end{lemma}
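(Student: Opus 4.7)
The first equality is just the definition of $\beta(t)$ from \eqref{eq:beta}; the substance is the identity
\[
\frac{4}{3^{3/4}}\inf_{u \in \mathscr{J}'(t)}\Phi(u) = \inf_{v \in \mathscr{J}'_\infty(t)}\Psi(v).
\]
The plan is to build an explicit rescaling correspondence between functions $u$ on $(0,1)$ and functions $v$ on $(-\infty,0)$ that are constant to the left of some point, then pass between $\Phi$ and $\Psi$ via the weighted AM--GM inequality $A + B = 3(A/3) + B \geq \tfrac{4}{3^{3/4}}A^{3/4}B^{1/4}$, whose equality case is $A = 3B$.

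For $\inf\Psi \leq \frac{4}{3^{3/4}}\inf\Phi$: given $u \in \mathscr{J}'(t)$ (first assumed smooth; approximation is addressed below) and $L > 0$, I would define
\[
v_L(x) := \begin{cases} u\!\left(\tfrac{x}{L}+1\right), & -L \leq x \leq 0, \\ -1, & x < -L. \end{cases}
\]
The conditions $u(0_+) = -1$ and $u'(0_+) = 0$ make $v_L$ glue $C^1$-smoothly at $x = -L$; together with $v_L(0) = t$ this places $v_L \in \mathscr{J}'_\infty(t)$. The change of variables $y = x/L + 1$ yields $\Psi(v_L) = L\int_0^1(u^2-1)^2\,dy + L^{-3}\int_0^1|u''|^2\,dy$, and minimizing over $L > 0$ (the optimum is $L_* = (3\!\int|u''|^2/\!\int(u^2-1)^2)^{1/4}$, which is exactly the AM--GM equality case) gives $\inf_{L > 0}\Psi(v_L) = \frac{4}{3^{3/4}}\Phi(u)$. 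Taking the infimum over $u$ yields the inequality.

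For the reverse direction, given $v \in \mathscr{J}'_\infty(t)$ I would set $u(y) := v(L_v(y-1))$ on $[0,1]$. Continuity of $v'$ (which holds since $v \in W^{2,\infty}$) combined with $v \equiv -1$ on $(-\infty, -L_v]$ forces $v'(-L_v) = 0$, so $u$ satisfies the three boundary conditions defining $\mathscr{J}'(t)$. Writing $A := \int_{-L_v}^0(v^2-1)^2\,dx$ and $B := \int_{-L_v}^0|v''|^2\,dx$, the change of variables shows $\int_0^1(u^2-1)^2 = A/L_v$ and $\int_0^1|u''|^2 = L_v^3 B$, so $\Phi(u) = A^{3/4}B^{1/4}$ (the $L_v$-dependence cancels by scale invariance of $\Phi$). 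Then $\Psi(v) = A + B \geq \tfrac{4}{3^{3/4}}\Phi(u) \geq \tfrac{4}{3^{3/4}}\inf_{\mathscr{J}'(t)}\Phi$ by the weighted AM--GM, and taking the infimum over $v$ concludes.

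The main obstacle is the regularity mismatch in the forward direction: if $u$ is only in $W^{2,p}$ then $v_L''$ may fail to be in $L^\infty$, so $v_L \notin W^{2,\infty}$. I would resolve this by approximation: since $W^{2,p}(0,1) \hookrightarrow C^1([0,1])$ in one dimension, mollification of $u$ followed by a small cubic correction restoring the pointwise conditions $u(0) = -1$, $u(1) = t$, $u'(0) = 0$ produces a sequence $u_n \in C^\infty([0,1]) \cap \mathscr{J}'(t)$ with $\Phi(u_n) \to \Phi(u)$; the rescalings $v_{L,n}$ then lie in $\mathscr{J}'_\infty(t)$, and the bound $\inf\Psi \leq \frac{4}{3^{3/4}}\Phi(u_n)$ passes to the limit to give $\inf\Psi \leq \frac{4}{3^{3/4}}\Phi(u)$ for arbitrary $u \in \mathscr{J}'(t)$.
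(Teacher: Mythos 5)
Your argument takes the same route as the paper's: the rescaling correspondence $u \mapsto v_L$ and $v \mapsto v(L_v(\cdot-1))$, the four-term weighted AM--GM inequality $A + B \geq \tfrac{4}{3^{3/4}}A^{3/4}B^{1/4}$, and the optimal $L_*$ realizing its equality case. One omission: you should first dispatch the degenerate case $t = -1$, as the paper does, since the formula for $L_*$ divides by $\int_0^1(u^2-1)^2\,dy$, which vanishes precisely when $u \equiv -1$; for $t \neq -1$, the $C^1$ embedding of $W^{2,p}(0,1)$ together with the endpoint constraints forces both integrals in $L_*$ to be strictly positive, so the choice of $L_*$ is always well-defined. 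On the other hand, your observation about the regularity mismatch between $W^{2,p}$ in $\mathscr{J}'(t)$ and $W^{2,\infty}$ in $\mathscr{J}'_\infty(t)$ is a genuine gap in the paper's own argument, which simply asserts $v_L \in \mathscr{J}'_\infty(t)$ without justification; your mollification-plus-cubic-correction density argument is a correct way to close it, amounting to the observation that $C^\infty([0,1])\cap\mathscr{J}'(t)$ is $\Phi$-dense in $\mathscr{J}'(t)$ so the rescaling need only be applied to the dense subclass.
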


\begin{proof}
We first note that if $t=-1$ then clearly $\inf_{u \in \mathscr{J}'(t)} \Phi(u) = 0$ and $\inf_{u \in \mathscr{J}'_\infty(t)} \Psi(u) = 0$, so we may assume that $t \neq -1$.  Now consider $u \in \mathscr{J}'(t)$.  Then take $L > 0$ depending only on $u$ that we shall choose later, and define $v \in \mathscr{J}'_\infty(t)$ via 
$$v(x) = \begin{cases}-1, & x \leq -L \\ u(x/L + 1), & -L < x < 0\end{cases}.$$  
Then
\begin{align}
    \Psi(v) &= \int_{-L}^0 (v(x)^2-1)^2 + |v''(x)|^2\,dx \nonumber\\ &= \int_{-L}^0 (u(x/L+1)^2-1)^2 + \frac{1}{L^4}|u''(x/L+1)|^2\,dx \nonumber\\&= \int_0^1 L(u(y)^2-1)^2 + \frac{1}{L^3}|u''(y)|^2\,dy. \label{eq:famchange}
\end{align}
By examination of the equality case of AM-GM, this is precisely $\frac{4}{3^{3/4}}\Phi(u)$ for a proper choice of $L$.  Specifically, one may take $L = \left(\frac{3\int_0^1 |u''|^2\,dx}{\int_0^1 (u^2-1)^2\,dx}\right)^{1/4}$, which is well-defined from the assumption that $t \neq -1$.  This gives $\frac{4}{3^{3/4}}\Phi(u) = \Psi(v) \geq \inf_{v \in \mathscr{J}_\infty'(t)} \Psi(v)$, and taking the infimum gives $\inf_{u \in \mathscr{J}'(t)} \Phi(u) \geq \inf_{v \in \mathscr{J}_\infty'(t)} \Psi(v)$.

On the other hand, if $v \in \mathscr{J}'_\infty(t)$, then from taking $u(x) := v\left(L_v(x-1)\right)$ we get from using the same computations done in \eqref{eq:famchange} and applying AM-GM that
\begin{align*}
    \Psi(v) &= \int_0^1 \frac{L_v}{3}(u(y)^2-1)^2 + \frac{1}{L_v^3}|u''(y)|^2\,dy \\
    &\geq \frac{4}{3^{3/4}}\left(\int_0^1 (u(y)^2-1)^2\,dy\right)^{3/4}\left(\int_0^1|u''(y)|^2\,dy\right)^{1/4} \\
    &= \frac{4}{3^{3/4}}\Phi(u),
\end{align*}
so that $\Psi(v) \geq \frac{4}{3^{3/4}} \inf_{u \in \mathscr{J}'(t)} \Phi(u)$.  Thus $\inf_{v \in \mathscr{J}'_\infty(t)} \Psi(v) \geq \frac{4}{3^{3/4}} \inf_{u \in \mathscr{J}'(t)} \Phi(u)$, finishing the proof.
\end{proof}


We will now prove several crucial results concerning the family $\mathscr{J}'_\infty(t)$ and the functional $\Psi$.  The first is a compactness result.

\begin{lemma}\label{psicompact}
Let $\Omega \subseteq \R$ be a non-empty bounded open set and $u_n \in w^{2,\infty}(\Omega)$ such that 
$$M := \sup_{n \in \N} \int_\Omega (u_n^2-1)^2 + |u_n''|^2\,dx < \infty.$$
There exists a subsequence $u_{n_k}$ and $u \in W^{2,2}(\Omega)$ such that
\begin{enumerate}
    \item $u_{n_k} \wc u$ in $W^{2,2}(\Omega)$,
    \item $u_{n_k}' \to u'$ almost everywhere and in $L^2(\Omega)$, and
    \item $u_{n_k} \to u$ almost everywhere and in $L^2(\Omega)$
\end{enumerate}
\end{lemma}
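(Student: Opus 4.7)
The plan is to show that $\{u_n\}$ is bounded in $W^{2,2}(\Omega)$, extract a weakly convergent subsequence via Theorem \ref{weakcompactness}, upgrade the convergence of the lower-order derivatives to strong $L^2$ convergence via Theorem \ref{weakstronger}, and finally pass to a sub-subsequence to obtain almost-everywhere convergence.

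First I would establish the three pieces of the $W^{2,2}(\Omega)$ bound. The second-derivative bound $\int_\Omega |u_n''|^2\,dx \leq M$ is immediate. For the $L^2$ bound on $u_n$ itself, I would use the elementary pointwise inequality $t^4 \leq 2(t^2-1)^2 + 2$ (which follows from $(t^2-2)^2 \geq 0$, rearranged), to deduce
\[
\int_\Omega u_n^4 \,dx \leq 2\int_\Omega (u_n^2-1)^2\,dx + 2\leb^1(\Omega) \leq 2M + 2\leb^1(\Omega),
\]
and then H\"older's inequality (using that $\Omega$ is bounded) bounds $\|u_n\|_{L^2(\Omega)}$ uniformly. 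For the intermediate bound on $u_n'$, I would invoke the interpolation inequality Theorem \ref{inter2} (applied on each connected component of $\Omega$ with a uniformly chosen $l > 0$, which is possible since $\Omega$ is bounded), which yields a uniform bound on $\|u_n'\|_{L^2(\Omega)}$ in terms of $\|u_n\|_{L^2(\Omega)}$ and $\|u_n''\|_{L^2(\Omega)}$. Together these establish $\sup_n \|u_n\|_{W^{2,2}(\Omega)} < \infty$.

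Next, by the weak compactness result Theorem \ref{weakcompactness} applied with $k=2$ and $p=2$, there exist a subsequence $\{u_{n_k}\}$ and $u \in W^{2,2}(\Omega)$ such that $u_{n_k} \wc u$ in $W^{2,2}(\Omega)$, which is conclusion (1). Theorem \ref{weakstronger} then automatically upgrades the weak convergences of the $0$-th and $1$-st derivatives to strong $L^2(\Omega)$ convergences, so $u_{n_k} \to u$ and $u_{n_k}' \to u'$ strongly in $L^2(\Omega)$. Passing to a further subsequence (twice, diagonally), we also obtain almost-everywhere convergence of both sequences, which completes conclusions (2) and (3).

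The main obstacle is the intermediate bound on $\|u_n'\|_{L^2(\Omega)}$, since Theorem \ref{inter2} is phrased for a single interval and requires a choice of $l$ strictly less than the length of the interval. If $\Omega$ is itself an interval (the use case in the applications to $\mathscr{J}'_\infty(t)$), this is no issue, but for a general bounded open set one must be careful to choose a single $l > 0$ that works uniformly for all components of $\Omega$ simultaneously, or else argue component by component and sum. Everything else is a direct assembly of the cited weak compactness and weak-to-strong upgrade results.
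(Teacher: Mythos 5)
Your proposal is correct and follows essentially the same route as the paper's proof: establish a uniform $W^{2,2}(\Omega)$ bound (second-derivative bound directly, $L^2$ bound on $u_n$ via a pointwise comparison with $(u_n^2-1)^2$, and first-derivative bound via the interpolation inequality Theorem~\ref{inter2}), then invoke Theorem~\ref{weakcompactness} for weak convergence, Theorem~\ref{weakstronger} for strong $L^2$ convergence of the lower-order derivatives, and a further subsequence extraction for almost-everywhere convergence. Your $t^4 \leq 2(t^2-1)^2 + 2$ plus H\"older is a clean alternative to the paper's case split $|u_n| < A$ versus $|u_n| \geq A$, and your remark that applying Theorem~\ref{inter2} requires care when $\Omega$ is disconnected is a legitimate point that the paper glosses over (though in its applications $\Omega$ is always an interval $(-m,0)$, so no real harm arises).
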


\begin{proof}
Find $A > 0$ large enough so that $z^2 \leq (z^2-1)^2$ for all $|z| \geq A$.  Then
$$\int_\Omega u_n^2\,dx \leq \int_{\{|u_n| < A\}} u_n^2\,dx + \int_{\{|u_n| \geq A\}} u_n^2\,dx \leq A^2\leb^1(\Omega)+ A\int_\Omega (u_n^2-1)^2 \leq A^2\leb^1(\Omega) + AM,$$
so $\{u_n\}_n$ is uniformly bounded in $L^2(\Omega)$.  Since $\{u_n''\}_n$ is uniformly bounded in $L^2(\Omega)$ as well, we may apply Theorem \ref{inter2} with some $l < \leb^1(\Omega)$ to deduce that $\{u_n'\}_n$ is uniformly bounded in $L^2(\Omega)$.  Hence $\{u_n\}_n$ is uniformly bounded in $W^{2,2}(\Omega)$.

By Theorem \ref{weakcompactness}, there exists a subsequence $u_{n_k}$ and $u \in W^{2,2}(0,T)$ such that $u_{n_k} \wc u$ in $W^{2,2}(0,T)$.  Moreover, by \ref{weakstronger}, we have that $u_{n_k} \to u$ strongly in $L^2(0,T)$ and $u_{n_k}' \to u'$ strongly in $L^2(0,T)$.  By extracting a subsequence, we may also get $u_{n_k} \to u$ and $u_{n_k}' \to u'$ almost everywhere.  
\end{proof}

Next, we show that we may ``force" a bound on $L_u$ for functions $u \in \mathscr{J}'_\infty(t)$.

\begin{lemma}\label{boundL}
Let $M,\eta > 0$.  Then there exists a constant $L_{M,\eta} > 0$ depending only on $M$ and $\eta$ such that for every $t \in \R$ and $u \in \mathscr{J}'_\infty(t)$ with $\Psi(u) \leq M < \infty$, there exists $v \in \mathscr{J}'_\infty(t)$ such that $\Psi(v) \leq \Psi(u) + O(\eta)$ and $v(x) = -1$ for all $x \leq -L_{M,\eta}$.  That is, $L_v \leq L_{M,\eta}$. 
\end{lemma}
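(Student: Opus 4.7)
The plan is to handle the non-trivial case $L_u > L_{M,\eta}$ (taking $v = u$ otherwise), where $L_{M,\eta}$ will be chosen large. I will splice $u$ at a carefully chosen interior point $x_0 \in [-L_{M,\eta}, 0]$ at which $u$ is nearly $\pm 1$ with a small derivative, and then either trim $u$ (if $u(x_0) \approx -1$) or replace the left portion by a fixed near-optimal $-1 \to 1$ transition profile (if $u(x_0) \approx +1$); Lemma \ref{glue} will produce all the required smooth connectors with controlled energy. First, I would fix a reference transition $\phi^* : [0, T^*] \to \R$ depending only on $\eta$, with $\phi^*(0) = -1$, $\phi^*(T^*) = 1$, $(\phi^*)'(0) = (\phi^*)'(T^*) = 0$, and $\int_0^{T^*}((\phi^*)^2 - 1)^2 + ((\phi^*)'')^2 \, dy \leq c^* + \eta$, where $c^*$ is the infimum of this integral over all such $T^* > 0$ and profiles $\phi^*$. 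Then pick $\delta = c\eta$ for a suitable universal $c$ and $L_{M,\eta}$ so that more than $M/\delta$ disjoint unit intervals fit inside $[-L_{M,\eta} + T^* + 2, -1]$; pigeonhole produces a unit interval $I_{k^*}$ there with $\int_{I_{k^*}} (u^2-1)^2 + (u'')^2 \, dx \leq \delta$.

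A brief combination of Theorem \ref{inter2}, the pointwise estimate $u^2 \leq 2 + 2(u^2 - 1)^2$, and a Sobolev embedding yields a universal $L^\infty$ bound on $u'$ over $I_{k^*}$ and rules out any zero of $u$ on $I_{k^*}$ for $\delta$ below a universal threshold; denote by $s \in \{\pm 1\}$ the common sign of $u$ on $I_{k^*}$. Because $|u + s| \geq 1$ on $I_{k^*}$, the bound $\int_{I_{k^*}} (u - s)^2 \leq \int_{I_{k^*}} (u^2-1)^2 \leq \delta$ follows, and Theorem \ref{inter2} applied to $u - s$ gives $\int_{I_{k^*}} (u')^2 \leq C\delta$. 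Markov's inequality then produces $x_0$ in the middle third of $I_{k^*}$ satisfying $(u(x_0) - s)^2 \leq \eta$ and $(u'(x_0))^2 \leq \eta$. Now build $v$: in the case $s = -1$, set $v = -1$ on $(-\infty, x_0 - 1]$, use Lemma \ref{glue} for a connector on $[x_0 - 1, x_0]$ interpolating $(-1, 0) \to (u(x_0), u'(x_0))$, and take $v = u$ on $[x_0, 0]$; in the case $s = +1$, instead put $v = -1$ on $(-\infty, x_0 - T^* - 1]$, place $\phi^*$ shifted on $[x_0 - T^* - 1, x_0 - 1]$, use Lemma \ref{glue} on $[x_0 - 1, x_0]$ to interpolate $(1, 0) \to (u(x_0), u'(x_0))$, and again take $v = u$ on $[x_0, 0]$. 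In either case the mismatches at $x_0$ are $O(\sqrt{\eta})$, so with $T = 1$ the Lemma \ref{glue} estimates make each connector's contribution to $\Psi$ of order $O(\eta)$; by the choice of $L_{M,\eta}$, we always have $L_v \leq L_{M,\eta}$.

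The main obstacle is the energy bound $\Psi(v) \leq \Psi(u) + O(\eta)$ in the case $s = +1$, which reduces to showing $c^* \leq \Psi(u|_{(-\infty, x_0]}) + O(\eta)$. For this I would shift $u|_{(-\infty, x_0]}$ to $\tilde u(y) := u(y + x_0)$ on $(-\infty, 0]$, so that $\tilde u \in \mathscr{J}'_\infty(u(x_0))$ and $\Psi(\tilde u) = \Psi(u|_{(-\infty, x_0]})$. Since $[x_0 - 1/3, x_0] \subset I_{k^*}$ by the middle-third choice, Cauchy-Schwarz applied to $\int_{I_{k^*}} (u')^2 \leq C\delta$ and $\int_{I_{k^*}} (u'')^2 \leq \delta$ forces $|\tilde u(-1/3) - 1|$ and $|\tilde u'(-1/3)|$ to be $O(\sqrt{\eta})$. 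Modifying $\tilde u$ on $[-1/3, 0]$ via Lemma \ref{glue} then produces $\hat u$ with $\hat u = \tilde u$ on $(-\infty, -1/3]$, $\hat u(0) = 1$, and $\hat u'(0) = 0$, at an additional cost of at most $O(\eta)$. Since $\hat u = -1$ eventually and has zero slopes at both ends of its transition, its restriction to $[-L_{\hat u}, 0]$ is a $-1 \to 1$ transition of the type defining $c^*$, so $\Psi(\hat u) \geq c^*$. Rearranging yields $c^* \leq \Psi(\hat u) \leq \Psi(\tilde u) + O(\eta) = \Psi(u|_{(-\infty, x_0]}) + O(\eta)$, completing the argument.
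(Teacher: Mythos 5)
Your proof is correct but takes a genuinely different route from the paper's. The paper proceeds in two stages: it first shows that the set $u^{-1}((0,2))$ (where $u$ is near $+1$) can be shrunk to measure $\leq 2M/\eta + 2$ at a cost of $O(\eta)$ in $\Psi$, by locating two cheap splice points in that set, bridging each to the constant $1$ via Lemma \ref{glue}, and excising the inserted plateau; only then, with the $+1$-region controlled, does it choose $L_{M,\eta}$ large enough that a cheap near-$(-1)$ splice point must lie inside $(-L_{M,\eta}+1,0)$, and truncates there. Your argument collapses this to a single pass: pigeonhole out a unit interval $I_{k^*}$ of energy $\leq \delta$ deep inside $(-L_{M,\eta},-1)$, rule out a zero crossing on it, and truncate at a good point $x_0 \in I_{k^*}$. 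When $u \approx -1$ on $I_{k^*}$ you essentially recover the paper's final truncation; the new ingredient is your treatment of the case $u \approx +1$ on $I_{k^*}$, where you splice in a fixed near-minimizing $-1\to 1$ profile $\phi^*$ of cost $c^* + \eta$ and pay for it by showing, via the auxiliary function $\hat u$, that the discarded left tail $u|_{(-\infty,x_0]}$ already costs at least $c^* - O(\eta)$ (because it can be extended to a $-1\to1$ transition with zero end-slopes at $O(\eta)$ extra cost). Your route handles both signs after a single pigeonhole and is somewhat more symmetric; the price is the introduction of the universal transition energy $c^*$ and the extra $\hat u$ lower-bound argument, neither of which the paper needs since it pre-shrinks the $+1$-plateau before ever truncating. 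Both proofs rely on the same core tools (Lemma \ref{glue}, Theorem \ref{inter2}, and an averaging/Markov step); yours additionally invokes the one-dimensional Sobolev embedding to get the $L^\infty$ bound on $u'$ that rules out zero crossings, which is a mild extra ingredient but perfectly sound.
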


\begin{proof}
We begin by constructing a function $v_1 \in \mathscr{J}'_\infty(t)$ such that $\Psi(v_1) \leq \Psi(u) + O(\eta)$ and $\leb^1(v_1^{-1}((0,2))) \leq C$ for a constant $C$ depending only on $M$ and $\eta$. 

Let $K = \frac{M}{\eta}$.  We will take $C = 2K+2$.  Let $E := u^{-1}((0,2))$.  If we have that $\leb^1(E) \leq 2K+2$, then we simply take $v_1 = u$.  Otherwise, we may let $E_1 := (-\infty,y) \cap E$, $F := [y,z] \cap E$, and $E_2 := (z,0) \cap E$, where $y,z$ are chosen such that $\leb^1(E_1) = \leb^1(E_2) = K$ and $\leb^1(F) = \leb^1(E) - 2K > 2$.

Since $u > 0$ over $E_1$, we may write
$$M \geq \int_{E_1} (u^2-1)^2 + |u''|^2\,dx \geq \int_{E_1} (u-1)^2 + |u''|^2\,dx,$$
so there exists $x_1 \in E_1$ such that 
$(u(x_1)-1)^2 + u''(x_1)^2 \leq \frac{M}{\leb^1(E_1)} = \frac{M}{K} = \eta$.
It follows by Lemma \ref{glue} that there exists $\tilde{u}_1 \in C^\infty([x_1,x_1+1])$ such that $\tilde{u}_1(x_1) = u(x_1)$, $\tilde{u}_1'(x_1) = u'(x_1)$, $\tilde{u}_1(x_1+1) = 1$, and $\tilde{u}_1'(x_1+1) = 0$, with 
\begin{align*}
  \int_{x_1}^{x_1+1} (\tilde{u}_1^2-1)^2 + |\tilde{u}''|^2\,dx &\leq \int_{x_1}^{x_1+1} 9(\tilde{u}_1-1)^2\,dx + \int_{x_1}^{x_1+1} |\tilde{u}''|^2\,dx \\
  &\leq 9C'(u(x_1)-1)^2 + C'|u'(x_1)|^2 \\
  &\leq 9C'\eta = O(\eta)
\end{align*}
for some universal constant $C' > 0$.  We construct $x_2 \in E_2$ and $\tilde{u}_2 \in C^\infty([x_2-1,x_2])$ in a similar manner so that $\int_{x_2-1}^{x_2} (\tilde{u}_2-1)^2 + |\tilde{u}_2''|^2\,dx = O(\eta)$.

Now define $\tilde{u}$ as
$$\tilde{u}(x) := \begin{cases}u(x), & -\infty < x < x_1 \\ \tilde{u}_1(x), & x_1 \leq x \leq x_1+1 \\ 1, & x_1+1 < x < x_2-1 \\ \tilde{u}_2(x), & x_2-1 \leq x \leq x_2 \\ u(x), & x_2 < x < 0\end{cases}.$$
Note that this is well-defined in the sense that $x_1+1 < x_2-1$ because $x_2-x_1 > z-y \geq \leb^1(F) \geq 2$.  Moreover our choices for $\tilde{u}_1$ and $\tilde{u}_2$ ensure that $u \in W^{2,\infty}(-\infty,0)$, and
\begin{align*}
    \Psi(\tilde{u}) &= \int_{(-\infty,x_1) \cup (x_2,0)} (\tilde{u}^2-1)^2 + |\tilde{u}''|^2\,dx + \int_{[x_1,x_1+1] \cup [x_2-1,x_2]} (\tilde{u}^2-1)^2 + |\tilde{u}''|^2\,dx \\
    &\leq \int_{(-\infty,x_1) \cup (x_2,0)} (u^2-1)^2 + |\tilde{u}''|^2\,dx + \int_{[x_1,x_1+1] \cup [x_2-1,x_2]} (u^2-1)^2 + |u''|^2\,dx + O(\eta) \\
    &\leq \Psi(u) + O(\eta).
\end{align*}
We may now take $v_1$ to be
$$v_1(x) := \begin{cases}\tilde{u}(x-x_2+x_1+2), & -\infty < x < x_2-1 \\ \tilde{u}(x), & x_2-1 < x < 0\end{cases}.$$
In essence, we have ``deleted" an interval in which $\tilde{u} = 1$.  We still have $v \in W^{2,\infty}(-\infty,0)$ and $v(0_+) = u(0_+) = t$, with $\lim_{x \to -\infty} v(x) = -1$, so that $v \in \mathscr{J}'_\infty(t)$.  Furthermore, $\Psi(v) = \Psi(\tilde{u}) \leq \Psi(u) + O(\eta)$.  Lastly, we see that
\begin{align*}
  \leb^1(\{x < 0 : 0 < v_1(x) < 2\}) &\leq \leb^1(\{x \in (-\infty,x_1+1) \cup (x_2-1,0) : 0 < \tilde{u}(x) < 2\})  \\
  & \leq \leb^1(\{x \in E_1 \cup E_2 : 0 < u(x) < 2\})+2 \\
  & = 2K+2 = C,
\end{align*}
which was our goal.

Now we can construct $v$.  Observe that
$$M \geq \int_{\{\min(|v-1|,|v+1|) > 1\}} (v_1^2-1)^2+|v_1''|^2\,dx \geq \leb^1(\{x < 0 : \min(|v(x)-1|,|v(x)+1|) > 1\}).$$
Thus, if we take $L_{M,\eta} = C + M + K+1$, then the set $$G := \{-L_{M,\eta}+1 < x < 0 : -2 < v(x) < 0\}$$ satisfies $\leb^1(G) \geq K$.  Since $\int_G (v_1+1)^2+|v''|^2\,dx \leq \int_G (v_1^2-1)^2+|v''|^2\,dx \leq M$, there exists $x_3 \in G$ such that 
$$(v_1(x_3)+1)^2 + |v_1''(x_3)|^2 \leq \frac{M}{\leb^1(G)} \leq M/K=\eta.$$
Hence, as we did before to $u$, we may use Lemma \ref{glue} to modify $v_1$ in the interval $(x_3-1,x_3)$ and hence obtain a function $v$ for which $v(x) = -1$ for all $x < x_3-1$, $v(x) = v_1(x)$ for $x_3 < x < 0$, and $\int_{x_3-1}^{x_3} (v_1^2-1)^2+|v_1''|^2\,dx \leq O(\eta)$.  

This function $v$ satisfies $\Psi(v) \leq \Psi(v_1) + O(\eta) \leq \Psi(u) + O(\eta)$ and, since $-L_{M,\eta}+1 < x_3$, we have that $v(x) = -1$ for all $x < -L_{M,\eta}$, where $L_{M,\eta}$ depends only on $M$ and $\eta$, as desired.
\end{proof}

Now we arrive at our first major result.

\begin{lemma}\label{betacts}
Let $t_0 \in \R$. Then we have that
$$\lim_{t \to t_0} \beta(t) = \beta(t_0),$$
where $\beta(t)$ is defined as in \eqref{eq:beta}.  That is, $\beta$ is continuous.
\end{lemma}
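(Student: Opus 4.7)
My plan is to establish both upper and lower semicontinuity of $\beta$ at $t_0$, working throughout with the representation $\beta(t) = \inf_{v \in \mathscr{J}'_\infty(t)} \Psi(v)$ from the previous lemma.

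For upper semicontinuity, I fix $\eta > 0$ and choose $u_0 \in \mathscr{J}'_\infty(t_0)$ with $\Psi(u_0) \leq \beta(t_0) + \eta$. Pick once and for all a bump $\varphi \in C^\infty(\R)$ with $\varphi \equiv 0$ on $(-\infty, -1]$, $\varphi'(-1) = 0$, and $\varphi(0) = 1$; fix $\delta \in (0, L_{u_0})$. For each $t \in \R$ set
$$v_t(x) := u_0(x) + (t - t_0)\varphi(x/\delta).$$
Since $\varphi(x/\delta) = 0$ for $x \leq -\delta$, the function $v_t$ agrees with $u_0$ on $(-\infty, -\delta]$ and satisfies $v_t(0) = t$, so $v_t \in \mathscr{J}'_\infty(t)$. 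As $t \to t_0$, the perturbation $(t - t_0)\varphi(\cdot/\delta)$ tends to $0$ in $C^2([-\delta, 0])$, whence $\Psi(v_t) \to \Psi(u_0)$. Thus $\beta(t) \leq \Psi(v_t) \leq \beta(t_0) + 2\eta$ for $t$ close enough to $t_0$, and letting $\eta \to 0$ yields $\limsup_{t \to t_0} \beta(t) \leq \beta(t_0)$.

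For lower semicontinuity, let $t_n \to t_0$. If $\liminf_n \beta(t_n) = +\infty$ there is nothing to prove, so by extracting a subsequence I may assume $\beta(t_n) \to L < \infty$. Pick $v_n \in \mathscr{J}'_\infty(t_n)$ with $\Psi(v_n) \leq \beta(t_n) + 1/n$, so that $\Psi(v_n) \leq M := L + 1$ for large $n$. Fix $\eta > 0$ and apply Lemma \ref{boundL} to obtain $\tilde{v}_n \in \mathscr{J}'_\infty(t_n)$ satisfying $\Psi(\tilde{v}_n) \leq \Psi(v_n) + O(\eta)$ and $\tilde{v}_n(x) = -1$ for all $x \leq -L_{M,\eta}$. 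I then restrict attention to the bounded interval $\Omega_0 := (-L_{M,\eta} - 1, 0)$ and invoke Lemma \ref{psicompact} to extract (along a further subsequence) some $u \in W^{2,2}(\Omega_0)$ with $\tilde{v}_n \wc u$ in $W^{2,2}(\Omega_0)$ and $\tilde{v}_n \to u$, $\tilde{v}_n' \to u'$ pointwise a.e.\ and in $L^2(\Omega_0)$.

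The one-dimensional Sobolev embedding $W^{2,2} \hookrightarrow C^1$ combined with Theorem \ref{weakstronger} upgrades these to uniform convergences on $\overline{\Omega_0}$. In particular $u(0) = \lim_n t_n = t_0$, and because $\tilde{v}_n \equiv -1$ on the subinterval $[-L_{M,\eta} - 1, -L_{M,\eta}]$ the limit $u$ satisfies the same; extending $u$ by $-1$ to $(-\infty, -L_{M,\eta} - 1]$ then places $u$ in $\mathscr{J}'_\infty(t_0)$ (a small mollification resolves the $W^{2,2}$-vs-$W^{2,\infty}$ technicality). Dominated convergence passes $\int (\tilde{v}_n^2 - 1)^2$ to $\int (u^2 - 1)^2$, while weak $L^2$ lower semicontinuity of $\|\cdot\|_{L^2}^2$ gives $\int |u''|^2 \leq \liminf_n \int |\tilde{v}_n''|^2$; together,
$$\beta(t_0) \leq \Psi(u) \leq \liminf_{n \to \infty} \Psi(\tilde{v}_n) \leq L + O(\eta),$$
and letting $\eta \to 0$ finishes the argument. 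The main obstacle is precisely this lower-semicontinuity step: without the uniform truncation supplied by Lemma \ref{boundL}, the near-minimisers $v_n$ could spread out over intervals of diverging length, precluding any compactness argument on the unbounded domain $(-\infty, 0)$; reducing to a fixed bounded interval is exactly what makes Lemma \ref{psicompact} applicable.
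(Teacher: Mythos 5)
Your proof is correct and, in both semicontinuity directions, departs from the paper's argument in ways that actually streamline it. For upper semicontinuity, the paper pushes a near-minimizer $v \in \mathscr{J}'(t_0)$ to $\mathscr{J}'(t)$ via the affine rescaling $u_t := \frac{1+t}{1+t_0}(v+1)-1$ and passes to the limit under the integral; your localized bump perturbation $v_t = u_0 + (t-t_0)\varphi(\cdot/\delta)$ achieves the same end more elementarily and, as a bonus, is insensitive to the case $t_0 = -1$, where the paper's rescaling formula has a vanishing denominator (you just need to allow an arbitrary $\delta>0$ when $L_{u_0}=0$). For lower semicontinuity, the paper uses only the weaker ``first half'' of the truncation construction (control on $\leb^1(\{0<u_n<2\})$), which forces a diagonalization over an exhausting family of intervals $(-m,0)$, a separate step (Step 4) showing that any $\Psi$-finite $u$ has a limit $\pm 1$ at $-\infty$, the enlarged class $\overline{\mathscr{J}'_\infty(t)}$ of Step 3, and a measure argument to rule out the limit $+1$. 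You instead invoke Lemma \ref{boundL} at full strength to cap $L_{\tilde v_n}$ uniformly, so a single application of Lemma \ref{psicompact} on one fixed bounded interval suffices, and Steps 1, 3, and 4 of the paper's proof become unnecessary (the monotonicity of Step 1 is bypassed since you dispose of the case $\liminf_n\beta(t_n)=+\infty$ directly). The $W^{2,2}$-versus-$W^{2,\infty}$ technicality you flag in identifying the limit as a competitor for $\beta(t_0)$ is real, but it is equally present in the paper's version---Lemma \ref{psicompact} only ever produces a $W^{2,2}$ limit---and your remark that a standard one-sided mollification resolves it is the right disposition.
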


\begin{proof} $ $\newline

\noindent\textbf{Step 1}

We claim that $\beta(t)$ is decreasing over $t \leq -1$ and increasing over $t \geq -1$.  

First note that $\beta(t) \geq 0$ for all $t$ and $\beta(-1) = 0$.  Now suppose $s,t \in \R$ satisfy either $-1 < s \leq t$ or $t \leq s < -1$.  We show that $\beta(s) \leq \beta(t)$ by proving that for all $v \in \mathscr{J}'(t)$ we may find $u \in \mathscr{J}'(s)$ for which $\Phi(u) \leq \Phi(v)$.

Indeed, since $v(1_-) = t$ and $v(0_+) = -1$, there must exist $T \in (0,1)$ for which $v(T) = s$.  Now take $u(x) := v(Tx)$.  Evidently, $u \in \mathscr{J}'(s)$.  Moreover we have
$$\int_0^1 (u^2-1)^2\,dx = \int_0^T T^{-1}(v^2-1)^2\,dx \leq \int_0^1 T^{-1}(v^2-1)^2\,dx$$
and
$$\int_0^1 |u''|^2\,dx = \int_0^T T^3|v''|^2\,dx \leq \int_0^1 T^3|v''|^2\,dx.$$
Thus
\begin{align*}
  \Phi(u) &= \left(\int_0^1 (u^2-1)^2\,dx\right)^{3/4}\left(\int_0^1 |u''|^2\,dx\right)^{1/4} \\
  &\leq \left(\int_0^1 T^{-1}(v^2-1)^2\,dx\right)^{3/4}\left(\int_0^1 T^3|v''|^2\,dx\right)^{1/4}\\
  &= \left(\int_0^1 (v^2-1)^2\,dx\right)^{3/4}\left(\int_0^1 |v''|^2\,dx\right)^{1/4}\\
  &= \Phi(v),  
\end{align*}
as needed.

\noindent\textbf{Step 2}

Take $t_0 \in \R$.  We show that $\limsup_{t \to t_0} \beta(t) \leq \beta(t_0)$.  It is sufficient to prove that for any $v \in \mathscr{J}'(t_0)$, we may pick $u_t \in \mathscr{J}'(t)$ for each $t \in \R$ such that $\lim_{t \to t_0} \Phi(u_t) = \Phi(v)$.  This is because if we fix $\eta > 0$, then we may take $v \in \mathscr{J}'(t_0)$ such that $\beta(t_0) \leq \frac{4}{3^{3/4}}\Phi(v) \leq \beta(t_0) + \eta$, take $u_t$ for each $t \in \R$ as above, and then choose $\delta > 0$ so small that $|\Phi(v) - \Phi(u_t)| < \eta$ for all $t$ with $|t-t_0| < \delta$, so that 
$$\beta(t) \leq \frac{4}{3^{3/4}}\Phi(u_t) \leq \frac{4}{3^{3/4}}(\Phi(v)+\eta) \leq \beta(t_0) + \eta+\frac{4}{3^{3/4}}\eta,$$
which is enough.

For $v \in \mathscr{J}'(t_0)$, we take $u_t \in \mathscr{J}'(t)$ to be $u_t := \frac{1+t}{1+t_0}(v+1)-1$.  Then
$$\int_0^1 (u_t^2-1)^2 \,dx = \left(\frac{1+t}{1+t_0}\right)^4\int_0^1 (v+1)^2\left(v+1-2 \cdot \frac{1+t_0}{1+t}\right)^2\,dx.$$
Taking the limit, we obtain
$$\lim_{t \to t_0} \int_0^1 (u_t^2-1)^2\,dx = \lim_{t \to t_0}\int_0^1 (v+1)^2\left(v+1-2 \cdot \frac{1+t_0}{1+t}\right)^2\,dx = \int_0^1 (v^2-1)^2\,dx$$
because $(v+1)^2\left(v+1-2 \cdot \frac{1+t_0}{1+t}\right)^2$ converges to $(v^2-1)^2$ pointwise as $t \to t_0$ and, since $v$ is bounded, we have that $(v+1)^2\left(v+1-2 \cdot \frac{1+t_0}{1+t}\right)^2$ is uniformly bounded for all $t$ sufficiently near $t$, which is enough to apply dominated convergence.

We also have
$$\lim_{t \to t_0} \int_0^1 |u_t''|^2\,dx = \lim_{t \to t_0} \left(\frac{1+t}{1+t_0}\right)^2\int_0^1 |v''|^2\,dx = \int_0^1 |v''|^2\,dx.$$
Altogether, we see that
\begin{align*}
  \lim_{t \to t_0} \Phi(u_t) &= \left(\lim_{t \to t_0} \int_0^1 (u_t^2-1)^2\,dx\right)^{3/4}\left(\lim_{t \to t_0} \int_0^1|u_t''|^2\,dx\right)^{1/4}\\ &= \left(\int_0^1(v^2-1)^2\,dx\right)^{3/4}\left(\int_0^1|v''|^2\,dx\right)^{1/4}\\ &= \Phi(v).  
\end{align*}
This completes the proof that $\beta$ is upper semi-continuous.  

\noindent\textbf{Step 3}

We now define yet another family $\overline{\mathscr{J}'_\infty(t)}$ via
$$\overline{\mathscr{J}'_\infty(t)} := \left\{u \in W^{2,\infty}(-\infty,0) : u(0_-) = t \text{ and } \lim_{x \to -\infty} u(x) = -1\right\}.$$
We claim that 
$$\beta(t) = \inf_{u \in \mathscr{J}'_\infty(t)} \Psi(u) = \inf_{u \in \overline{\mathscr{J}'_\infty(t)}} \Psi(u).$$
Since $\mathscr{J}'_\infty(t) \subseteq \overline{\mathscr{J}'_\infty(t)}$, it is clear that $\inf_{u \in \mathscr{J}'_\infty(t)} \Psi(u) \geq \inf_{u \in \overline{\mathscr{J}'_\infty(t)}} \Psi(u)$.  To show that $\inf_{u \in \mathscr{J}'_\infty(t)} \Psi(u) \leq \inf_{u \in \overline{\mathscr{J}'_\infty(t)}} \Psi(u)$, we fix $\epsilon > 0$ and $u \in \overline{\mathscr{J}'_\infty(t)}$.  It is then sufficient to find $v \in \overline{\mathscr{J}'_\infty(t)}$ for which $\Psi(v) \leq \Psi(u) + O(\epsilon)$.

Find $M$ so large that $\int_{-\infty}^{-M} |u''|^2\,dx < \epsilon^2$ and $|u(x)+1| < \epsilon$ for all $x \leq -M$.  Then
$$\int_{-M-1}^{-M} (u+1)^2\,dx \leq \epsilon^2,$$
and so by Theorem \ref{inter2} applied to $u+1$ with $l=1$, we see that
$$\left(\int_{-M-1}^{-M} |u'|^2\,dx\right)^{1/2} \leq c\left(\int_{-M-1}^{-M} (u+1)^2\,dx\right)^{1/2}+c\left(\int_{-M-1}^{-M} |u''|^2\,dx\right)^{1/2} \leq 2c\epsilon$$
for a universal constant $c > 0$, so that $\int_{-M-1}^{-M} |u'|^2\,dx \leq 4c^2\epsilon^2$.  It follows that there exists $T \in (M,M+1)$ such that $|u'(-T)| \leq 2c\epsilon$.  Since $|u(-T)+1| < \epsilon$, we may apply Lemma \ref{glue} to find $f \in C^\infty([-T-1,-T])$ such that 
\begin{enumerate}[label=(\roman*)]
    \item $f(-T-1) = f'(-T-1) = 0$, $f(-T) = u(-T)$, $f'(-T) = u'(-T)$, 
    \item for a universal constant $C_0$ we have $$\int_{-T-1}^{-T} |f+1|^2\,dx \leq C_0(|u(-T)+1|^2 + |u'(-T)|^2) = C_0(\epsilon^2 + 4c^2\epsilon^2),$$ 
    \item for a universal constant $C_2$ we have $$\int_{-T-1}^{-T} |f''|^2\,dx \leq C_2(|u(-T)+1|^2 + |u'(-T)|^2) = C_2(\epsilon^2 + 4c^2\epsilon^2),$$ and
    \item $\sup_{[-T-1,-T]} |f| \leq |u(-T)+1| + \frac12|u'(-T)| = (1+2c)\epsilon$.
\end{enumerate}

Naturally we now take
$$v(x) := \begin{cases}0, & x < -T-1 \\ f(x), & -T-1 \leq x \leq -T \\ u(x), & -T < x < 0\end{cases}.$$
By item (i), we see that $v \in W^{2,\infty}(-\infty,0)$, and so $v \in \mathscr{J}'_\infty(t)$.

For $\epsilon$ sufficiently small, we have $(1+2c)\epsilon < 1$, which ensures that $|f(x)-1| \leq 2$ for all $x \in [-T-1,-T]$ by item (iv).  This bound and item (ii) gives
\begin{align}
  \int_{-\infty}^0 (v^2-1)^2\,dx &\leq \int_{-\infty}^0 (u^2-1)^2\,dx + \int_{-T-1}^{-T} (f^2-1)^2\,dx \nonumber \\
  &\leq \int_{-\infty}^0 (u^2-1)^2\,dx + 4\int_{-T-1}^{-T} (f+1)^2\,dx \nonumber \\
  &\leq \int_{-\infty}^0 (u^2-1)^2\,dx + 4C_0(\epsilon^2 + 4c^2\epsilon^2)\,dx, \label{eq:vpsibound1}
\end{align}
and item (iii) gives
\begin{align}
    \int_{-\infty}^0 |v''|^2\,dx &\leq \int_{-\infty}^0 |u''|^2\,dx + \int_{-T-1}^{-T} |f''|^2\,dx \nonumber \\
    &\leq \int_{-\infty}^0 |u''|^2\,dx + C_2(\epsilon^2 + 4c^2\epsilon^2). \label{eq:vpsibound2}
\end{align}
Adding \eqref{eq:vpsibound1} and \eqref{eq:vpsibound2} gives
$$\Psi(v) \leq \Psi(u) + (4C_0+C_2)(\epsilon^2 + 4c^2\epsilon^2) = O(\epsilon),$$
which is enough.  This proves the claim.

\noindent\textbf{Step 4}

In this step, we show that if $u \in W^{2,\infty}(-\infty,0)$ such that $\Psi(u) < \infty$, then $\lim_{x \to -\infty} u(x) \in \{-1,1\}$.

It is easy to see that this is true provided that the limit $\lim_{x \to -\infty} u(x)$ exists.  To show that the limit exists, let $l_1 := \liminf_{x \to -\infty} u(x)$ and $l_2 := \limsup_{x \to -\infty} u(x)$, and suppose for contradiction that $l_1 < l_2$.  Then we may find $k_1,k_2$ with $l_1 < k_1 < k_2 < l_2$ such that $k_1,k_2 \not\in \{-1,1\}$.  The continuity of $u$ ensures that $u^{-1}(k_1)$ and $u^{-1}(k_2)$ are unbounded.

Now consider the quantity
$$\gamma := \inf\left\{\int_0^T (v^2-1)^2 + |v''|^2\,dx : v \in W^{2,\infty}(0,T),  T > 1, v(0_+) = k_1, v(T_-) = k_2\right\}.$$
We claim that $\gamma = 0$.  Indeed, we may find sequences $x_n,y_n$ such that $u(-x_n) = k_1$, $u(-y_n) = k_2$, and $x_n + 1 < y_n < x_{n+1}$ for all $n$.  But now
$$\infty > \Psi(u) \geq \sum_{n=1}^\infty \int_{-y_n}^{-x_n} (u^2-1)^2 + |u''|^2\,dx \geq \sum_{n=1}^\infty \gamma,$$
which can only be possible if $\gamma = 0$.

Now find sequences $T_n > 1$ and $v_n \in W^{2,\infty}(0,T_n)$ for which $v_n(0_+) = k_1$, $v_n({T_n}_-) = k_2$, and $\int_0^{T_n} (v_n^2-1)^2 + |v_n''|^2\,dx \to 0$.

Since $T_n > 1$, we may obtain the uniform bound $\int_0^1 (v_n^2-1)^2 + |v_n''|^2\,dx \leq M$ for some $M > 0$.  Also,
$$0 = \lim_{n \to \infty} \int_0^1 (v_n^2-1)^2 + |v_n''|^2\,dx \geq \lim_{n \to \infty} \int_0^1 |v_n''|^2\,dx,$$
so we have that $v_n'' \to 0$ in $L^2(0,1)$.  Moreover, by Lemma \ref{psicompact}, there exists a subsequence $v_{n_k}$ and $v \in W^{2,2}(0,1)$ such that $v_{n_k}'' \wc v''$ in $L^2(0,1)$, $v_{n_k}'' \to 0$ in $L^2(0,1)$, and $v_{n_k} \to v$ pointwise almost everywhere.  Now, notice that both $v_{n_k}'' \wc 0$ and $v_{n_k}'' \wc v''$ in $L^2(0,1)$, so $v'' = 0$ almost everywhere.  It follows that $v$ is affine, so that $v(x) = mx+b$ for some $m,b \in \R$.

Now, Fatou's Lemma gives that
$$0 = \lim_{n \to \infty} \int_0^1 (v_n^2-1)^2 + |v_n''|^2\,dx \geq \lim_{n \to \infty} \int_0^1 (v_n^2-1)^2\,dx \geq \int_0^1 (v^2-1)^2\,dx,$$
which implies that $m = 0$ and $b \in \{-1,1\}$.  

However, we claim that $v$ must satisfy $v(0_+) = k_1$, which would imply that $k_1 \in \{-1,1\}$, resulting in a contradiction.  To see this, use the fact that $v_n(0_+) = k_1$ for all $n$, the Fundamental Theorem of Calculus, and H\"older's inequality to write
$$|v_n(x) - k_1| \leq \int_0^x |v_n'(t)|\,dt \leq \left(\int_0^x |v_n'(t)|^2\,dt\right)^{1/2}\sqrt{x} \leq \left(\int_0^1 |v_n'(t)|^2\,dt\right)^{1/2}\sqrt{x}.$$
Then since $v_n'$ converges in $L^p(0,1)$ we must have that $\int_0^1 |v_n'(t)|^2\,dt$ is bounded, so $|v_n(x) - k_1| \leq M_1\sqrt{x}$ for a constant $M_1 > 0$.  Sending $n \to +\infty$ we then have $|v(x) - k_1| \leq M_1\sqrt{x}$ for almost every $x \in (0,1)$, so we may send $x \to 0^+$ along an appropriate sequence to deduce that $|v(0_+) - k_1| = 0$, as needed.

\noindent\textbf{Step 5}

We are now ready to show that $\liminf_{t \to t_0} \beta(t) \geq \beta(t_0)$.  

Letting $t_n \to t_0$ be arbitrary, we just need to show that $\liminf_{n \to \infty} \beta(t_n) \geq \beta(t_0)$.  

By the monotone properties that we have proven about $\beta$ in Step 1, we see that $\{\beta(t_n) : n \in \N\}$ is bounded by a constant $M$.  Specifically, we may take $$M = \max(\beta(\sup_{n \in \N} t_n), \beta(\inf_{n \in \N} t_n)).$$

Next, by extraction of a subsequence, let us assume that there exists the limit $L := \lim_{n \to \infty} \beta(t_n)$.

Now fix $\eta > 0$.  Select $\tilde{u}_n \in \mathscr{J}'_\infty(t_n)$ such that $\beta(t_n) \leq \Psi(\tilde{u}_n) \leq \beta(t_n) + \frac1n$.  Since $\Psi(\tilde{u}_n) \leq M+1$ for all $n$, we may use Step 6 to find $u_n \in \mathscr{J}'_\infty(t_n)$ such that $\leb^1(\{x < 0 : 0 < u_n(x) < 2\}) \leq C_{M,\eta}$ where $C_{M,\eta}$ depends only on $M$ and $\eta$, and $\Psi(u_n) \leq \Psi(\tilde{u}_n) + O(\eta)$, so that
\begin{equation}
\beta(t_n) \leq \Psi(u_n) \leq \beta(t_n) + \frac1n + O(\eta).  \label{eq:psiapprox}
\end{equation}  

Note that $\{u_n\}$ is uniformly bounded in $\Psi$.  In particular, $\sup_{n \in \N} \int_{-m}^0 |u_n''|^2\,dx < \infty$ for each $m \in \N$.  

Consider $m=2$.  By Lemma \ref{psicompact} we may extract a subsequence $\{u_{2,n}\}_n$ of $\{u_n\}_n$ for which $u_{2,n} \wc v_2$ in $W^{2,2}(-2,0)$ for some $v_2 \in W^{2,2}(-2,0)$.  and $u_{2,n} \to v_2$ almost everywhere.

Inductively, for $m \geq 3$ we take $\{u_{m,n}\}_n$ to be a subsequence of $\{u_{m-1,n}\}_n$ such that $u_{m,n} \wc v_m$ in $W^{2,2}(-m,0)$ for some $v_m \in W^{2,2}(-m,0)$ and such that $u_{m,n} \to v_m$ almost everywhere.

We claim that $v_m$ extends $v_{m-1}$ for all $m \geq 3$.  Indeed, $u_{m-1,n} \to v_{m-1}$ almost everywhere, and since $\{u_{m,n}\}_n$ is a subsequence of $\{u_{m-1,n}\}_n$, we have that $u_{m,n} \to v_{m-1}$ almost everywhere over $(-m+1,0)$.  Since $u_{m,n} \to v_m$ almost everywhere, it follows that $v_{m-1}$ and $v_m$ agree over $(-m+1,0)$ as needed.

It follows that there exists a unique $u:(-\infty,0) \to \R$ extending each $v_m$.  Now consider $\{u_{n,n}\}$.  For each $m$, $\{u_{n,n}\}_n$ may be viewed as a subsequence of $\{u_{m,n}\}$ for $n$ large enough, so $u_{n,n} \to v_n = u$ almost everywhere in $(-m,0)$.  Thus $u_{n,n} \to v$ over $(-\infty,0)$ almost everywhere.  Similarly, we see that $u_{n,n} \wc v$ in $W^{2,2}(-m,0)$ for all $m > 0$.  In particular, $u_{n,n} \wc v$ in $W^{2,2}_{\text{loc}}(-m,0)$. 

Since $\{u_{n,n}\}_n$ is a subsequence of $\{u_n\}_n$, we let $n_k$ be such that $\{u_{n_k}\}_k = \{u_{n,n}\}_n$.

To finish we use the fact that $u_{n_k} \to u$ almost everywhere in $(-\infty,0)$ to obtain the inequality
$$\int_{-\infty}^0 (u^2-1)^2\,dx \leq \liminf_{k \to \infty} \int_0^T (u_{n_k}^2-1)^2\,dx$$
by Fatou's Lemma.  Next, we use the property that $u_{n_k}'' \wc u''$ in $L^2_{\text{loc}}(0,T)$ and the fact that $z \mapsto z^2$ is convex, together with Theorem \ref{convexslsc}, to conclude that
$$\int_{-\infty}^0 |u''|^2\,dx \leq \liminf_{k \to \infty} \int_0^T |u_{n_k}''|^2\,dx.$$
Combining these two inequalities gives $\Psi(u) \leq \liminf_{k \to \infty} \Psi(u_{n_k})$

Now, on one hand, we have from \eqref{eq:psiapprox} that $\Psi(u_{n_k}) \leq \beta(t_{n_k}) + \frac{1}{n_k} + O(\eta)$, and taking the liminf we have
$$L + O(\eta) = \liminf_{k \to \infty} \beta(t_{n_k}) + O(\eta) \geq \liminf_{k \to \infty} \Psi(u_{n_k}).$$

On the other hand, we claim that $\Psi(u) \geq \beta(t_0)$.  It suffices to prove that $u \in \overline{\mathscr{J}'_\infty(t_0)}$.  

Since $u_{n_k} \to u$ almost everywhere, and $u_{n_k}(0_-) = t_{n_k}$ with $t_{n_k} \to t_0$, we must have $u(0_-) = t_0$.  Moreover, since $\Psi(u) < \infty$, we have by Step 5 that the limit $\lim_{x \to -\infty} u(x)$ exists, and is either $1$ or $-1$.  However,
$$\leb^1(\{u > 0\}) = \leb^1\left(\liminf_{k \to \infty} \{u_{n_k} > 0\}\right) \leq \liminf_{k \to \infty} \leb^1(\{u_{n_k} > 0\}) \leq C_{M,\eta} < \infty,$$
which eliminates the possibility that the limit is 1.  Thus $\lim_{x \to -\infty} u(x) = -1$, so we may conclude that $u \in \overline{\mathscr{J}'_\infty(t_0)}$.

With $\Psi(u) \geq \beta(t_0)$, we have that
$$L + O(\eta) \geq \liminf_{k \to \infty} \Psi(u_{n_k}) \geq \Psi(u) \geq \beta(t_0).$$
But $\eta > 0$ was arbitrary, so $L \geq \beta(t_0)$, which is what we wanted to show.
\end{proof}

We now prove our main result, Theorem \ref{boundary2}.

\begin{proof}
The cases in which $u \not\in L^p(\Omega,\{-1,1\})$ or $u \in L^p(\Omega,\{-1,1\}) \setminus BPV(\Omega;\{-1,1\})$ are handled as in Steps 1 and 4 of the proof of Theorem \ref{gamma2}, so let us assume that $u \in BPV(\Omega;\{-1,1\})$.

For the liminf inequality, let $u_n \to u$ in $L^p(\Omega)$.  We may assume that $u_n(a_+) = a_{\epsilon_n}$ and $u_n(b_-) = b_{\epsilon_n}$ for all $n$.  

Fix $\eta > 0$, and suppose that $u(b_-) = -1$.  Then we may find an interval $(b-\delta,b)$ over which $u = -1$, and now for all large enough $n$ we follow Step 2 of the proof of Theorem \ref{gamma2} in which we ``modify" $u_n$ in $(b,b-\delta)$ so that it becomes ``affixed" to $-1$ at $b-\delta$.  That is, we find $v_n:(b-\delta,b) \to \R$ such that $v_n \in W^{2,p}(b-\delta,b)$, $v_n(b-\delta_-) = -1$, $v_n'(b-\delta_-) = 0$, $v_n(b_-) = u_n(b_-) = b_{\epsilon_n}$, and
$$\int_{b-\delta}^{b} \epsilon_n^{-1}(v_n^2-1)^2+\epsilon_n^3|v_n''|^2\,dx \leq \int_{b-\delta}^{b} \epsilon_n^{-1}(u_n^2-1)^2+\epsilon_n^3|u_n''|^2\,dx + \eta.$$
Now, as in Step 3 of the proof of Theorem \ref{gamma2}, we may change variables and apply the AM-GM inequality to eventually obtain the bound
$$\int_{b-\delta}^{b} \epsilon_n^{-1}(v_n^2-1)^2+\epsilon_n^3|v_n''|^2\,dx \geq \frac{4}{3^{3/4}}\inf_{u \in \mathscr{J}'(b_{\epsilon_n})} \Phi(u) = 2\beta(b_{\epsilon_n}).$$
If instead $u(b_-) = 1$, then by a symmetrical argument, we instead obtain the term $2\beta(-b_{\epsilon_n})$.  Both of these terms may be written as $2\beta(-\sgn(u(b_-))b_{\epsilon_n})$.  

Using the same argument, we may obtain the term $2\beta(-\sgn(u(a_+))a_{\epsilon_n})$, and we recover the term $\alpha\essVar_\Omega u$ as in Steps 2 and 3 of the proof of Theorem \ref{gamma2}.

Overall, we see that the inequality
$$\liminf_{n \to \infty} G_{\epsilon_n}(u_n) \geq \alpha\essVar_\Omega u + \liminf_{n \to \infty} 2\beta(-\sgn(u(a_+))a_{\epsilon_n}) + 2\beta(-\sgn(u(b_-))b_{\epsilon_n})$$
may be obtained, and so we get
$$\liminf_{n \to \infty} G_{\epsilon_n}(u_n) \geq \alpha\essVar_\Omega u +  2\beta(-\sgn(u(a_+))a_0) + 2\beta(-\sgn(u(b_-))b_0)$$
by continuity of $\beta$.

For the limsup inequality, we use a construction similar to that done in Step 5 of the proof of Theorem \ref{gamma2}.  We begin by strengthening the continuity result on $\beta(t)$.  Define the constant
$$\beta_\epsilon(t) := \inf\left\{\Psi(v) : v \in \mathscr{J}'_\infty(t), L_v \leq \frac{1}{\sqrt{\epsilon}}\right\}$$
and let $t_0 \in \R$ with $t_n \to t_0$.  We claim that $\lim_{n \to \infty} \beta_{\epsilon_n}(t_n) = \beta(t_0)$.  To see this, fix $\eta > 0$ and for each $n \in \N$ take $v_n \in \mathscr{J}'_\infty(t_n)$ for which $\beta(t_n) \leq \Psi(v_n) \leq \beta(t_n) + \frac1n$.  The continuity of $\beta$ ensures that $\{\Psi(v_n)\}_n$ is bounded by a constant $M > 0$.  By Lemma \ref{boundL} we may construct $\tilde{v}_n$ for which $\Psi(\tilde{v}_n) \leq \Psi(v_n) + O(\eta) \leq \beta(t_n) + \frac1n + O(\eta)$ and $L_{\tilde{v}_n} \leq L_{M,\eta}$ where $L_{M,\eta}$ depends only on $M$ and $\eta$.  Particularly $L_{M,\eta}$ has no dependence on $n$, thus $L_{\tilde{n}_n} \leq \frac{1}{\sqrt{\epsilon_n}}$ for all $n$ large enough.  For all such $n$ we may write
$$\beta_{\epsilon_n}(t_n) \leq \Psi(\tilde{v}_n) \leq \beta(t_n) + \frac1n + O(\eta),$$
and so by taking the limsup we may obtain
$$\limsup_{n \to \infty} \beta_{\epsilon_n}(t_n) \leq \beta(t_0) + O(\eta)$$ by continuity of $\beta$.  As $\eta > 0$ was arbitrary, we get $\limsup_{n \to \infty} \beta_{\epsilon_n}(t_n) \leq \beta(t_0)$.  But we clearly also have $\beta_{\epsilon_n}(t_n) \geq \beta(t_n)$, and taking the liminf finishes the proof of the claim.

We now turn to the proof of the limsup inequality.  As we did for the liminf inequality, assume the case $u(b_-) = -1$ and find $\delta > 0$ small enough so that $u = -1$ over the interval $(b-\delta,b)$.  We will define $u_n$ over $(b-\delta,b)$.  Take $v_n \in \mathscr{J}'_\infty(b_{\epsilon_n})$ satisfying $L_{v_n} \leq \frac{1}{\sqrt{\epsilon_n}}$ such that
\begin{equation}
    \beta_{\epsilon_n}(b_{\epsilon_n}) \leq \Psi(v_n) \leq \beta_{\epsilon_n}(b_{\epsilon_n}) + \frac1n. \label{beta2bound}
\end{equation}
Since $\epsilon_nL_{v_n} \leq \sqrt{\epsilon_n} \to 0$, we have $\epsilon_nL_{v_n} < \delta$ for all $n$ large enough.  For all such $n$, we define
$$u_n(x) := \begin{cases}-1, & b-\delta < x \leq b-\epsilon_nL_{v_n} \\ v_n\left(\frac{x-b}{\epsilon_n}\right), & b-\epsilon_nL_{v_n} < x < b\end{cases}.$$
This satisfies $u_n(b-\delta) = -1$, $u_n'(b-\delta) = 0$, and the boundary condition $u_n(b_-) = b_{\epsilon_n}$.  Moreover
\begin{align*}
  \int_{b-\delta}^b \epsilon_n^{-1}(u_n^2-1)^2 &+ \epsilon_n^3|u_n''|^2\,dx\\
  &= \int_{b-\epsilon_nL_{v_n}}^b  \epsilon_n^{-1}\left(v_n\left(\frac{x-b}{\epsilon_n}\right)^2-1\right)^2 + \frac{\epsilon_n^3}{\epsilon_n^4}v_n''\left(\frac{x-b}{\epsilon_n}\right)^2\,dx\\
  &= \int_{-L_{v_n}}^0 (v_n^2-1)^2 + |v_n''|^2\,dx\\ 
  &= \Psi(v_n) \leq \beta_{\epsilon_n}(b_{\epsilon_n})+\frac1n,
\end{align*}
thus
$$\limsup_{n \to \infty} \int_{b-\delta}^b \epsilon_n^{-1}(u_n^2-1)^2 + \epsilon_n^3|u_n''|^2\,dx \leq \limsup_{n \to \infty} \beta_{\epsilon_n}(b_{\epsilon_n})+\frac1n = \beta(b_0)$$
by the claim.  It remains to show that $u_n \to -1$ in $L^p(b-\delta,b)$. 

Since $\epsilon_nL_{v_n} \to 0$, we have that $u_n \to -1$ almost everywhere in $(b-\delta,b)$.  By Vitali, it suffices to prove that $\{|u_n|^p \cdot 1_{(b-\delta,b)}\}_n$ is uniformly integrable.  Indeed, we have
$$\int_{b-\delta}^b (u_n^2-1)^2\,dx = \epsilon_n\int_{-L_{v_n}}^0 (v_n^2-1)^2\,dx = \epsilon_n\Psi(v_n),$$
and since $\limsup_{n \to \infty} \Psi(v_n) \leq \lim_{n \to \infty} \beta_{\epsilon_n}(b_{\epsilon_n}) + \frac1n < \infty$, we have that $\lim_{n \to \infty} \int_{b-\delta}^b (u_n^2-1)^2\,dx = 0$, so uniform integrability follows from Lemma \ref{unif}.

Our construction for $u_n$ over $(b-\delta,b)$ provides us with the term $\beta(b_0)$ for the Gamma limit in the case that $u(b_-) = -1$.  If instead $u(b_-) = 1$, we may negate our construction to instead acquire the term $\beta(-b_0)$.   The term in both cases is equal to $\beta(b_0\sgn u(b_-))$.  A symmetrical construction near the endpoint $a$ yields the term $\beta(a_0)\sgn u(a_+))$.  Lastly we may define $u_n$ away from the endpoints as in Step 5 of the proof of Theorem \ref{gamma2} in order to recover the term $\alpha \essVar_\Omega u$, completing the proof. 
\end{proof}

\section{Acknowledgments}

This research was partially supported by the National Science Foundation under grants No. DMS 2108784 and DMS 1714098.  I am also indebted to my advisor Giovanni Leoni, whose guidance and feedback made this paper possible.

\bibliographystyle{abbrv}
\bibliography{bibliography.bib}

\begin{thebibliography}{10}

\bibitem{braides}
A.~Braides.
\newblock {\em Gamma-convergence for Beginners}.
\newblock Oxford University Press, Great Clarendon Street, Oxford 0X2 6DP,
  2002.

\bibitem{chermisi}
M.~Chermisi, G.~D. Maso, I.~Fonseca, and G.~Leoni.
\newblock Singular perturbation models in phase transitions for second-order
  materials.
\newblock {\em Indiana University Mathematics Journal}, 60(2):367--409, 2011.

\bibitem{cicalese}
M.~Cicalese, E.~N. Spadaro, and C.~I. Zeppieri.
\newblock Asymptotic analysis of a second-order singular perturbation model for
  phase transitions.
\newblock {\em Calculus of Variations and Partial Differential Equations},
  41(5):127--150, 2011.

\bibitem{boundaryy}
R.~Cristoferi and G.~Gravina.
\newblock Sharp interface limit of a multi-phase transitions model under
  nonisothermal conditions.
\newblock To appear in \textit{Calculus of Variations and Partial Differential
  Equations}.

\bibitem{bigbook}
I.~Fonseca and G.~Leoni.
\newblock {\em Modern Methods in the Calculus of Variations: Lp Spaces}.
\newblock Springer Science+Business Media, 233 Spring Street, New York, NY,
  10013, USA, 2007.

\bibitem{fonseca}
I.~Fonseca and C.~Mantegazza.
\newblock Second order singular perturbation models for phase transitions.
\newblock {\em SIAM Journal on Mathematical Analysis}, 31(5):1121--1143, May
  2000.

\bibitem{cahnhilliard}
J.~E.~H. John W.~Cahn.
\newblock Free energy of a nonuniform system. i. interfacial free energy.
\newblock {\em The Journal of Chemical Physics}, 28(2):258--267, 1958.

\bibitem{me}
T.~Lam.
\newblock Masters thesis.
\newblock {\em Carnegie Mellon University}.
\newblock In Preparation.

\bibitem{ACL}
G.~Leoni.
\newblock Additional material for the book.
\newblock \url{https://www.ams.org/publications/authors/books/postpub/gsm-181},
  2017.
\newblock Accessed: 2022-08-18.

\bibitem{leonibook}
G.~Leoni.
\newblock {\em A First Course in Sobolev Spaces, Second Edition}.
\newblock American Mathematical Society, Providence, Rhode Island, 2017.

\bibitem{modica}
L.~Modica.
\newblock The gradient theory of phase transitions and the minimal interface
  criterion.
\newblock {\em Archive for Rational Mechanics and Analysis}, 98:123--142, 1987.

\bibitem{boundary}
N.~C. Owen, J.~Rubinstein, and P.~Sternberg.
\newblock Minimizers and gradient flows for singularly perturbed bi-stable
  potentials with a dirichlet condition.
\newblock {\em Proceedings of the Royal Society of London. Series A,
  Mathematical and Physical Sciences}, 429(1877):505--542, 1988.

\bibitem{sternberg}
P.~Sternberg.
\newblock The effect of a singular perturbation on nonconvex variational
  problems.
\newblock {\em Archive for Rational Mechanics and Analysis}, 101:209--260,
  1988.

\end{thebibliography}

\end{document}